\newcommand{\ea}{\end{array}}
\newtheorem{theorem}{Theorem}[section]
\newtheorem{proposition}{Proposition}[section]
\newtheorem{corollary}{Corollary}[section]
\newtheorem{lemma}{Lemma}[section]
\newtheorem{definition}{Definition}[section]
\newtheorem{Cod}{Condition}[section]
\begin{document}

\begin{frontmatter}

\title{Large deviations for  2D Stochastic Chemotaxis-Navier-Stokes System}

\author[mymainaddress]{Yunfeng Chen}
\ead{cyf01@mail.ustc.edu.cn}

\author[mysecondaryaddress,myfourthaddress]{Xuhui Peng}
\ead{xhpeng@hunnu.edu.cn}

\author[mythirdaryaddress]{Jianliang Zhai}
\ead{zhaijl@ustc.edu.cn}

\address[mymainaddress]{School of Mathematics, University of Science and Technology of China, Hefei, 230026, China}
\address[mysecondaryaddress]{MOE-LCSM, School of Mathematics and Statistics, Hunan Normal University,
Changsha, Hunan 410081, P.R. China;}
\address[mythirdaryaddress]{School of Mathematical Sciences and Wu Wen-Tsun Key Laboratory of Mathematics, University of Science and Technology of China, Hefei, 230026, China;}
\address[myfourthaddress]{Key Laboratory of Applied Statistics and Data Science, Hunan Normal University, College of Hunan Province, Changsha 410081, P.R.China}

\begin{abstract}
In this paper, we establish a large deviation principle for 2D stochastic Chemotaxis-Navier-Stokes equation perturbed by a small multiplicative
noise. The main difficulties come from the lack of a suitable compact embedding into the space occupied by the solutions and the inherent complexity of equation. Finite dimensional projection arguments and introducing suitable stopping times play important roles.



\end{abstract}

\begin{keyword}
Stochastic Chemotaxis-Navier-Stokes equations\sep strong solutions\sep large deviation principle\sep weak convergence method.
\end{keyword}

\end{frontmatter}


\section{Introduction}
\setcounter{equation}{0}
Let  $\mathcal{O}\subset\mathbb{R}^2$ be a bounded convex domain with smooth boundary
$\partial \mathcal{O}$.
The purpose of this paper is to establish a Freidlin-Wentzell type large deviation principle (LDP) for the solutions of the coupled 2D stochastic Chemotaxis-Navier-Stokes system on $\mathcal{O}$:
\begin{align}\label{eq system 00}
& dn^\varepsilon+u^\varepsilon\cdot \nabla n^\varepsilon dt=\delta \Delta n^\varepsilon dt-\nabla\cdot(\chi(c^\varepsilon)n^\varepsilon\nabla c^\varepsilon)dt,\nonumber\\
& dc^\varepsilon+u^\varepsilon\cdot \nabla c^\varepsilon dt=\mu \Delta c^\varepsilon dt-k(c^\varepsilon)n^\varepsilon dt,\nonumber\\
& du^\varepsilon+(u^\varepsilon\cdot \nabla)u^\varepsilon dt+\nabla P^\varepsilon dt=\nu \Delta u^\varepsilon dt-n^\varepsilon\nabla\phi ~dt+\varepsilon\sigma(u^\varepsilon )dW_t,\\
& \nabla\cdot u^\varepsilon=0,\ \ \ \ \ t>0,\ x\in\mathcal{O},\nonumber
\end{align}
as the  small parameter $\varepsilon>0$ converges to $0$, with the boundary conditions
\begin{align}\label{eq boundary condition 1}
\frac{\partial n^\varepsilon}{\partial v}=\frac{\partial c^\varepsilon}{\partial v}=0\text{  and  }u^\varepsilon=0\ \ \text{   for }x\in\partial\mathcal{O}\text{ and }t>0,
\end{align}
and the initial conditions
\begin{align}\label{eq boundary condition 2}
n^\varepsilon(0,x)=n_0(x),\ \ \ c^\varepsilon(0,x)=c_0(x),\ \ \ u^\varepsilon(0,x)=u_0(x),\ \ \ x\in\mathcal{O}.
\end{align}
Here $v$ stands for the inward normal on the boundary $\partial O$.

The deterministic version ($\varepsilon=0$) of equation (\ref{eq system 00}), proposed by Tuval $ et\ al.$ \cite{TCDWKG 2005}, has attracted significant attention from mathematicians in recent years, as evidenced by a multitude of studies(e.g., \cite{Cao 2016,CKL 2014,Duan Lorz Markowich 2010,Ishida 2015,KMS 2016,Li Li 2016,Liu-Lorz,Tao Winkler 2013,Winkler,Winkler 10,Winkler 2016,Winkler 2015 01,Zhang Zheng 2014}). The model describes the behavior of aerobic bacterial populations in sessile water droplets \cite{DL, TCDWKG 2005}, capturing three primary phenomena: the chemotactic movement of bacteria toward oxygen, the impact of dense bacteria on the fluid motion due to gravity, and the convective transport of both cells and oxygen. Further biological context is elaborated in \cite{Lorz 2010,TCDWKG 2005}. Here, cell density and oxygen concentration are expressed as $n^\varepsilon = n^\varepsilon(x, t)$ and $c^\varepsilon = c^\varepsilon(x, t)$, respectively, with fluid velocity and pressure denoted as $u^\varepsilon = u^\varepsilon(x, t)$ and $P^\varepsilon = P^\varepsilon(x, t)$. The diffusion coefficients for cells, chemicals, and fluid are represented by the positive constants $\delta, \mu$ and $\nu$, respectively. The spatiotemporal variable $\phi=\phi(x)$ defines the gravitational potential, while $\chi(c^\varepsilon)$ and $k(c^\varepsilon)$ represent the chemotactic sensitivity and oxygen consumption rate, assumed to be sufficiently differentiable functions.
The process $\{W_t,\ t\geqslant0\}$ is a cylindrical Wiener process, signifying external stochastic influences.

Together with Zhang \cite{Zhai Zhang 2020}, the third author recently showed the existence and uniqueness of  strong solutions in both the analytical sense and the probabilistic sense to equation (\ref{eq system 00}). Notably, the strong  analytical solutions identified in \cite{Zhai Zhang 2020} belong to the  space $L^\infty([0,T],C^0(\bar{\mathcal{O}}))\times L^\infty([0,T],W^{1,q}(\mathcal{O}))\times L^\infty([0,T],D(A^\alpha))$, which does not constitute a Polish space. There seems few results on  LDP for stochastic partial differential equations (SPDEs) taking values in non-Polish spaces, and such issues are challenging due to the lack of space separability.  In this paper, we demonstrate that these solutions are situated within the Polish space $ C([0,T],C^0(\bar{\mathcal{O}}))\times C([0,T],C^0(\bar{\mathcal{O}})) \times C([0,T],D(A^\alpha))$ (see Proposition \ref{regularity}), and we will prove the LDP  of equation (\ref{eq system 00}) in this space.

To formulate the LDP, we adopt the weak convergence methodology pioneered by Budhiraja, Dupuis, and Maroulas \cite{BDM}. Matoussi, Sabbagh, and Zhang \cite{MSZ} provided an advanced condition for satisfying the Budhiraja-Dupuis-Maroulas criteria, offering enhanced efficacy for the analysis of large deviations in stochastic evolution equations; see e.g., \cite{DWZZ,WZ}. In this paper we will use this method.

The primary challenge is to demonstrate the strong convergence in the space $ C([0,T],C^0(\bar{\mathcal{O}}))\times C([0,T],C^0(\bar{\mathcal{O}})) \times C([0,T],D(A^\alpha))$  of solutions to the so-called skeleton equations through the weak convergence of controls  $h_n$ in $L^2([0, T],U)$ (see Condition \ref{cod} (a) in the proof of Theorem \ref{Thm main 1}). There are two classical methods for achieving such convergence. One is to  secure a space with compact embedding; see  e.g., \cite{BDM,FG,WZZ}. Another is to resort to a time approximation strategy as explored in \cite{CM,DZ}.
Nonetheless, for our problem, finding an appropriate compact embedding seems to be difficult, and the time approximation method may course a large amount of computation.
Drawing inspiration from \cite{CCY} and \cite{L}, we address this challenge by applying finite-dimensional projection arguments(see Section \ref{a}).
Additionally, introducing suitable stopping times play an important role to overcome   the inherent complexity of equations; see e.g., (\ref{tauM}) and (\ref{tau}).

Finally, it is worth noting that Zhang and Liu \cite{ZL} have recently established the global solvability of three-dimensional stochastic Chemotaxis-Navier-Stokes systems with L\'evy processes. Additionally,  Hausenblas $et\ al$.\cite{HMR} have demonstrated the existence of a unique probabilistic strong solution for two-dimensional stochastic Chemotaxis-Navier-Stokes systems with additional noise in the $c$-equation.

The rest of the paper is organized as follows. In Section \ref{section2}, we introduce the necessary notations and preliminaries, present the hypotheses, and state the main result. The existence and uniqueness of solutions to the associated skeleton equations are proven in Section \ref{skeleton}. Finally, we verify  two conditions to establish LDP in Section \ref{a} and Section \ref{b}, respectively.

\section{Framework and Statement of the Main Results}\label{section2}
\setcounter{equation}{0}
First of all, let us  briefly recall some essential definitions and topological spaces that will be used throughout this paper.

Let $L^q(\mathcal{O}), 1\leqslant q\leqslant \infty$ denotes the $L^q$ space with respect to the Lebesgue measure with norm $\|\cdot\|_{L^q}$. $W^{k,q}(\mathcal{O})$ denotes the Sobolev space of functions whose distributional derivatives of  order up to  $k$ belong to $L^q$ with norm $\|\cdot\|_{k,q}$.
Let $A$ be the realization of the Stokes operator $-\mathcal{P}\Delta$, where $\mathcal{P}$ denotes the Helmholtz projection from $L^2(\mathcal{O})$ into the space $H=\{\varphi\in L^2(\mathcal{O}):\nabla\cdot\varphi=0,\varphi(x)|_{x\in\partial \mathcal{O}}=0\}$. Let $A^\alpha$ be the fractional powers of the nonegative operator $A$ for all $\alpha>0$ with domain {$D(A^\alpha)=\{\varphi\in H:\|\varphi\|_\alpha := \|\varphi\|_{D(A^\alpha)} = \|A^\alpha\varphi\|_{L^2}<\infty\}$.
}
In the sequel, $(e^{t\Delta})_{t\geqslant 0}$, $\left(e^{-tA}\right)_{t\geqslant 0}$ will denote respectively the Neumann heat semigroup and the Stokes semigroup with Dirichlet boundary condition. 
Throughout this article, $C$ denotes a generic constant that may change from
line to line.

In this paper, we study (\ref{eq system 00}) with the following condition on the parameters and functions:
\begin{itemize}
\item[({\bf H.1})] \begin{itemize}
             \item[(a)] $\chi\in C^2([0,\infty)),\ \chi>0$ in $[0,\infty)$,

             \item[(b)] $k\in C^2([0,\infty)),\ k(0)=0,\ k>0$ in $(0,\infty)$,

             \item[(c)] $\phi\in C^2(\bar{\mathcal{O}})$,
           \end{itemize}
\item[({\bf H.2})] $(\frac{k(c)}{\chi(c)})'>0,\ (\frac{k(c)}{\chi(c)})''\leqslant0,\ (\chi(c)\cdot k(c))'\geqslant0$ on $[0,\infty)$.
\end{itemize}

Let $U$ be a separable real  Hilbert space and  $\{W_t,\ t\geqslant0\}$ a $U$-cylindrical  Wiener process
on a given complete, filtered probability space $(\Omega,\mathcal{F},\{\mathcal{F}_t\}_{t\geqslant0}, \mathbb{P})$,
representing the driving  external random force.

 Let ${\mathcal{L}^2}(U,D(A^\beta))$ denote the space of Hilbert-Schmidt operators $g$ from $U$ into $D(A^\beta)$, and its norm is denoted by $\|g\|_{{\mathcal{L}^2_\beta}}$. For a  mapping $\sigma:D(A^\beta)\rightarrow \mathcal{L}^2(U,D(A^\beta))$, we introduce the following assumptions: there exists a positive constant $K$ such that

\begin{itemize}
\item[({\bf H.3})]   for all $u_1,u_2,u\in H$,
           \begin{align*}
               \|\sigma(u_1)-\sigma(u_2)\|^2_{\mathcal{L}^2_0}\leqslant K \|u_1-u_2\|^2_{H},
               \text{ and }
               \|\sigma(u)\|^2_{\mathcal{L}^2_0}\leqslant K(1+\|u\|^2_{H}),
           \end{align*}
           where $\mathcal{L}^2_0={\mathcal{L}^2}(U,H)$,
\item[({\bf H.4})]  for all $u_1,u_2,u\in D(A^{\alpha})$,
           \begin{align*}
           \|\sigma(u_1)-\sigma(u_2)\|^2_{\mathcal{L}^2_\alpha}\leqslant K \|u_1-u_2\|^2_{\alpha},
               \text{ and }
               \|\sigma(u)\|^2_{\mathcal{L}^2_{\alpha}}\leqslant K(1+\|u\|^2_{{\alpha}}),
           \end{align*}

\item[({\bf H.5})]  for all $u_1,u_2,u\in D(A^{\frac{1}{2}})$,
           \begin{align*}
           \|\sigma(u_1)-\sigma(u_2)\|^2_{\mathcal{L}^2_{\frac{1}{2}}}\leqslant K \|u_1-u_2\|^2_{\frac{1}{2}},
               \text{ and }
               \|\sigma(u)\|^2_{\mathcal{L}^2_{{\frac{1}{2}}}}\leqslant K(1+\|u\|^2_{{\frac{1}{2}}}).
           \end{align*}

\end{itemize}
 Set  $u^\varepsilon(t)=u^\varepsilon(t, \cdot)$, $n^\varepsilon(t)=n^\varepsilon(t, \cdot)$, and $c^\varepsilon(t)=c^\varepsilon(t, \cdot)$. Let $q>2$.

\begin{definition}\label{def mild solution}
We say that $(n^\varepsilon,c^\varepsilon,u^\varepsilon)$ is a mild solution of system (\ref{eq system 00}) if $(n^\varepsilon,c^\varepsilon,u^\varepsilon)$ is a progressively measurable stochastic processes with values in $C^0(\bar{\mathcal{O}})\times W^{1,q}({\mathcal{O}})\times D(A^\alpha)$, which satisfies,  $\mathbb{P}$-a.s.,
\begin{align}\label{eq system 01}
& n^\varepsilon(t)=e^{t\delta \Delta}n_0-\int_0^te^{(t-s)\delta \Delta}\Big\{u^\varepsilon(s)\cdot \nabla n^\varepsilon(s)\Big\}ds-\int_0^te^{(t-s)\delta \Delta}\Big\{\nabla\cdot\Big(\chi(c^\varepsilon(s))n^\varepsilon(s)\nabla c^\varepsilon(s)\Big)\Big\}ds,\nonumber\\
& c^\varepsilon(t)=e^{t\mu \Delta}c_0-\int_0^te^{(t-s)\mu \Delta}\Big\{u^\varepsilon(s)\cdot \nabla c^\varepsilon(s)\Big\}ds-\int_0^te^{(t-s)\mu \Delta}\Big\{k(c^\varepsilon(s))n^\varepsilon(s)\Big\}ds,\nonumber\\
& u^\varepsilon(t)=e^{-t\nu A}u_0-\int_0^te^{-(t-s)\nu A}\mathcal{P}\Big\{(u^\varepsilon(s)\cdot \nabla)u^\varepsilon(s)\Big\}ds-\int_0^te^{-(t-s)\nu A}\mathcal{P}\Big\{n^\varepsilon(s)\nabla\phi \Big\}ds \nonumber\\
&\quad\quad\quad\quad \quad\quad\quad\quad\quad\quad\quad\quad\quad\quad\quad\quad\quad+\int_0^te^{-(t-s)\nu A}\varepsilon\sigma(u^\varepsilon(s))dW_s.
\end{align}
\end{definition}

We point out that $(n^\varepsilon,c^\varepsilon,u^\varepsilon)$ is equivalent to a variational solution of the system in the Gelfand triple $W^{1,2}({\mathcal{O}})\subset L^2({\mathcal{O}})\subset W^{1,2}({\mathcal{O}})^*$, that is, $(n^\varepsilon,c^\varepsilon,u^\varepsilon)$  satisfies,
 $\mathbb{P}$-a.s.,
\begin{align}\label{eq system 02}
& n^\varepsilon(t)+\int_0^tu^\varepsilon(s)\cdot \nabla n^\varepsilon(s)ds=n_0+\delta \int_0^t\Delta n^\varepsilon(s)ds-\int_0^t\nabla\cdot\Big(\chi(c^\varepsilon(s))n^\varepsilon(s)\nabla c^\varepsilon(s)\Big)ds,\nonumber\\
& c^\varepsilon(t)+\int_0^tu^\varepsilon(s)\cdot \nabla c^\varepsilon(s)ds=c_0+\mu \int_0^t\Delta c^\varepsilon(s)ds-\int_0^tk(c^\varepsilon(s))n^\varepsilon(s)ds,\nonumber\\
& u^\varepsilon(t)+\int_0^t\mathcal{P}\Big\{(u^\varepsilon(s)\cdot \nabla)u^\varepsilon(s)\Big\}ds=u_0-\nu \int_0^tA u^\varepsilon(s)ds-\int_0^t\mathcal{P}\Big\{(n^\varepsilon(s)\nabla \phi)\Big\}ds\nonumber\\
&\quad\quad\quad\quad \quad\quad\quad\quad\quad\quad\quad\quad\quad\quad\quad\quad\quad+\int_0^t\varepsilon\sigma(u^\varepsilon(s))dW_s.
\end{align}

For more details of the above statement, we refer to Remark 2.1 in \cite{Zhai Zhang 2020}.

By Theorem 2.1 in \cite{Zhai Zhang 2020}, we have
\begin{proposition}\label{Thm main 1}

Assume
\begin{align}\label{eq boundary condition 3}
& n_0\in C^0(\bar{\mathcal{O}}),\ \ n_0>0\ \text{in}\ \bar{\mathcal{O}},\nonumber\\
& c_0\in W^{1,q}(\mathcal{O}),\ \text{for some }q>2,\ c_0>0\ \text{in}\ \bar{\mathcal{O}},\nonumber\\
& u_0\in D(A^\alpha),\ \text{for some}\ \alpha\in(1/2,1),
\end{align}
and the assumptions {\rm({\bf H.1})-({\bf H.5})} hold. Then there exists a unique mild/variational solution $(n^\varepsilon,c^\varepsilon,u^\varepsilon)$ to the system (\ref{eq system 00}).

Moreover, $\mathbb{P}$-a.s., for any $T>0$,
\begin{eqnarray}\label{ncu}
    (n^\varepsilon,c^\varepsilon,u^\varepsilon)\in
    L^\infty([0,T],C^0(\bar{\mathcal{O}}))
    \times
    L^\infty([0,T],W^{1,q}(\mathcal{O}))
    \times
L^\infty([0,T],D(A^\alpha)).
\end{eqnarray}
\end{proposition}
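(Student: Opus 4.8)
\proof
This statement is, up to notation, Theorem 2.1 of \cite{Zhai Zhang 2020}; the plan is therefore to recall the architecture of that argument and to indicate the points where the hypotheses ({\bf H.1})--({\bf H.5}) and (\ref{eq boundary condition 3}) enter in the present formulation.

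First I would establish \emph{local} existence and uniqueness of a mild solution by a contraction mapping argument applied to the integral system (\ref{eq system 01}). The natural space is $C([0,T_0],C^0(\bar{\mathcal{O}}))\times C([0,T_0],W^{1,q}(\mathcal{O}))\times C([0,T_0],D(A^\alpha))$ for a small (random) time $T_0>0$, equipped with norms that are suitably weighted near $t=0$. The three deterministic nonlinearities are controlled by the standard $L^p$--$L^q$ smoothing estimates for the Neumann heat semigroups $(e^{t\delta\Delta})_{t\geqslant0}$, $(e^{t\mu\Delta})_{t\geqslant0}$ and for the Stokes semigroup $(e^{-tA})_{t\geqslant0}$, together with the embeddings $W^{1,q}(\mathcal{O})\hookrightarrow C^0(\bar{\mathcal{O}})$ (valid since $q>2$) and $D(A^\alpha)\hookrightarrow L^\infty$ for $\alpha\in(1/2,1)$; the terms $\nabla\cdot(\chi(c^\varepsilon)n^\varepsilon\nabla c^\varepsilon)$ and $k(c^\varepsilon)n^\varepsilon$ are treated using $\chi,k\in C^2$ from ({\bf H.1}). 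The stochastic convolution $\int_0^t e^{-(t-s)\nu A}\varepsilon\sigma(u^\varepsilon(s))\,dW_s$ is handled by the factorization method and the Burkholder--Davis--Gundy inequality, where ({\bf H.3})--({\bf H.5}) supply both the linear growth and the Lipschitz bounds needed to close the fixed point and to obtain pathwise uniqueness via Gronwall's lemma applied to the difference of two solutions.

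Next I would promote the local solution to a global one and derive the bound (\ref{ncu}). This is the substantive part: one needs a priori estimates that rule out finite-time blow-up of the $C^0$, $W^{1,q}$ and $D(A^\alpha)$ norms. Following the deterministic Chemotaxis--Navier--Stokes analysis (e.g. \cite{Winkler,Zhang Zheng 2014}) combined with It\^o's formula, the key object is a quasi-energy/entropy functional whose dissipation is controlled precisely by the structural conditions ({\bf H.2}) (monotonicity and concavity of $k/\chi$ and monotonicity of $\chi k$); this yields $L^1\log L^1$ and $L^2$ bounds on $n^\varepsilon$ and on $\nabla\sqrt{c^\varepsilon}$, hence, after bootstrapping through the mild formulation and semigroup regularization, the uniform-in-time bounds in $C^0(\bar{\mathcal{O}})$, $W^{1,q}(\mathcal{O})$ and $D(A^\alpha)$ on every interval $[0,T]$. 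Positivity of $n^\varepsilon$ and $c^\varepsilon$, propagated from the positivity of $n_0,c_0$ in (\ref{eq boundary condition 3}) by the maximum principle, is used throughout so that $\chi(c^\varepsilon)$, $k(c^\varepsilon)$ and the entropy are well defined.

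I expect the main obstacle to be exactly these global a priori bounds in the stochastic setting: the coupling of the three equations is strong, and one must carry the randomness through the nonlinear estimates by stopping-time localization, moment bounds on the stochastic convolution, and care that the constants arising in the Gronwall steps are integrable. Since all of this is carried out in detail in \cite{Zhai Zhang 2020}, the proof here reduces to invoking that theorem; the only genuinely new point needed later in the paper, namely continuity in time (membership in the Polish space $C([0,T],C^0(\bar{\mathcal{O}}))\times C([0,T],C^0(\bar{\mathcal{O}}))\times C([0,T],D(A^\alpha))$), is deferred to Proposition \ref{regularity}. $\hfill\square$
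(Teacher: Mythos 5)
Your proposal is correct and takes the same route as the paper: Proposition \ref{Thm main 1} is not proved in this paper at all but is simply quoted as Theorem 2.1 of \cite{Zhai Zhang 2020}, exactly as you conclude. Your sketch of the underlying argument (cut-off/contraction for local existence, then entropy-type a priori estimates using ({\bf H.2}) and stopping-time localization for global existence) accurately reflects the structure of that reference, as can also be seen from the parallel argument the paper gives in Section \ref{skeleton} for the skeleton equation.
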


Prior to presenting the main result of this paper, we first prove the following result.
\begin{proposition}\label{regularity}
	Assume that requirements of Proposition \ref{Thm main 1} are met. Then, the solution $(n^\varepsilon,c^\varepsilon,u^\varepsilon)$ of (\ref{eq system 00}) enjoys the regularity properties:
	\begin{align*}
		(n^\varepsilon,c^\varepsilon,u^\varepsilon)\in
    C([0,T],C^0(\bar{\mathcal{O}}))
    \times
    C([0,T],C^0(\bar{\mathcal{O}}))
    \times
C([0,T],D(A^\alpha)).
	\end{align*}
\end{proposition}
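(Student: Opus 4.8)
The plan is to upgrade the $L^\infty$-in-time regularity from Proposition~\ref{Thm main 1} to continuity-in-time by exploiting the mild formulation \eqref{eq system 01} together with the smoothing estimates for the Neumann heat semigroup $(e^{t\Delta})_{t\geqslant 0}$ and the Stokes semigroup $(e^{-tA})_{t\geqslant 0}$. The key point is that all three components are already known to live in $L^\infty([0,T]; X)$ for a suitably strong space $X$ (namely $C^0(\bar{\mathcal O})$, $W^{1,q}(\mathcal O)$, $D(A^\alpha)$), so every term appearing under the Duhamel integrals can be estimated in $L^\infty([0,T])$ in a norm which the semigroup then maps smoothly into the target space. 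Since for $c^\varepsilon$ the natural a priori bound is in $W^{1,q}(\mathcal O)$ with $q>2$, the Sobolev embedding $W^{1,q}(\mathcal O)\hookrightarrow C^0(\bar{\mathcal O})$ (in dimension $2$) is what lets us phrase the conclusion for $c^\varepsilon$ in $C([0,T];C^0(\bar{\mathcal O}))$.

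First I would treat the deterministic (Duhamel) parts of all three equations. For $u^\varepsilon$: write $u^\varepsilon(t)=e^{-t\nu A}u_0 - \int_0^t e^{-(t-s)\nu A}\mathcal P F(s)\,ds$ with $F = (u^\varepsilon\cdot\nabla)u^\varepsilon + n^\varepsilon\nabla\phi$, plus the stochastic term. The term $e^{-t\nu A}u_0$ is continuous in $D(A^\alpha)$ because $u_0\in D(A^\alpha)$ and the semigroup is strongly continuous on $D(A^\alpha)$. For the convolution term one uses the standard bound $\|A^\alpha e^{-tA}\|_{\mathcal L(H)}\le C t^{-\alpha}$ together with $\|A^{-\rho}\mathcal P (v\cdot\nabla v)\|_{L^2}\lesssim \|v\|_{D(A^\alpha)}^2$ for a suitable $\rho<1$ (this estimate is already implicitly available from the fixed-point construction of \cite{Zhai Zhang 2020}) to show $s\mapsto A^{-\rho}\mathcal P F(s)\in L^\infty([0,T];L^2)$, whence the convolution $t\mapsto \int_0^t A^\alpha e^{-(t-s)\nu A}\mathcal P F(s)\,ds$ is Hölder continuous in $H$ with values in $D(A^\alpha)$ by the classical argument (e.g. split $[0,t']$ into $[0,t]$ and $[t,t']$, use $\|(e^{-(t'-t)A}-I)A^{-\eta}\|\lesssim (t'-t)^\eta$). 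An analogous computation handles $n^\varepsilon$ and $c^\varepsilon$: for $n^\varepsilon$, $e^{t\delta\Delta}n_0\in C([0,T];C^0(\bar{\mathcal O}))$ since $n_0\in C^0(\bar{\mathcal O})$ and the Neumann heat semigroup is strongly continuous on $C^0(\bar{\mathcal O})$, while the two convolution terms — involving $u^\varepsilon\cdot\nabla n^\varepsilon$ and $\nabla\cdot(\chi(c^\varepsilon)n^\varepsilon\nabla c^\varepsilon)$ — are controlled using the $L^p$–$L^q$ smoothing of $e^{t\Delta}$ and the integrability of the known bounds on $n^\varepsilon,c^\varepsilon,u^\varepsilon$, the divergence being absorbed by a factor $t^{-1/2-\epsilon}$ which is still integrable. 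The same for the $c^\varepsilon$ convolution terms, with the output measured in $W^{1,q}$ and then embedded into $C^0(\bar{\mathcal O})$.

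Then I would handle the stochastic convolution $Z^\varepsilon(t)=\varepsilon\int_0^t e^{-(t-s)\nu A}\sigma(u^\varepsilon(s))\,dW_s$. Here one invokes the factorization method (Da Prato–Kwapień–Zabczyk): for $\alpha<\theta<1$, write $Z^\varepsilon$ as $e^{-(t-s)A}$ applied to a time-fractional-integral process $Y^\varepsilon$, and use the maximal-regularity / Burkholder estimate $\mathbb E\int_0^T\|A^{\alpha+\epsilon}Y^\varepsilon(s)\|_{H}^p\,ds<\infty$, which follows from assumption {\rm({\bf H.4})} (the Lipschitz and linear-growth bounds on $\sigma$ in $\mathcal L^2_\alpha$) and the already-established bound $u^\varepsilon\in L^\infty([0,T];D(A^\alpha))$. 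A standard Kolmogorov-type argument then yields that $Z^\varepsilon$ has a version in $C([0,T];D(A^\alpha))$, $\mathbb P$-a.s. Combining the three steps gives $(n^\varepsilon,c^\varepsilon,u^\varepsilon)\in C([0,T];C^0(\bar{\mathcal O}))\times C([0,T];C^0(\bar{\mathcal O}))\times C([0,T];D(A^\alpha))$ almost surely.

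**The main obstacle** I anticipate is the precise bookkeeping of the exponents in the parabolic-smoothing estimates for the nonlinear terms — in particular making sure that the singularities $(t-s)^{-\beta}$ arising from the gradient/divergence structure of $u^\varepsilon\cdot\nabla n^\varepsilon$, $\nabla\cdot(\chi(c^\varepsilon)n^\varepsilon\nabla c^\varepsilon)$, and $\mathcal P(u^\varepsilon\cdot\nabla u^\varepsilon)$ all carry exponents $\beta<1$ when measured into the target spaces $C^0(\bar{\mathcal O})$, $W^{1,q}(\mathcal O)$, $D(A^\alpha)$ respectively (this is exactly where $\alpha\in(1/2,1)$, $q>2$, and the two-dimensionality of $\mathcal O$ are used), so that the Duhamel integrals are genuinely (Hölder) continuous rather than merely bounded. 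These estimates are, however, essentially the same ones already established in \cite{Zhai Zhang 2020} to run the contraction argument, so the work here is mostly to observe that they also yield time-continuity; I would therefore organize the proof as a sequence of short lemmas, one per term, each of the form ``a bounded source in space $Y$ produces, through the relevant semigroup, a Hölder-continuous trajectory in space $X$''.
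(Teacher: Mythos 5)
Your proposal is correct and follows essentially the same route as the paper: the deterministic Duhamel terms are handled by combining the semigroup smoothing estimates with the a priori $L^\infty$-in-time bounds of Proposition \ref{Thm main 1} (the paper restarts the mild formulation at time $s$ and estimates $\|n(t)-n(s)\|_{L^\infty}$, etc., directly, which is the same mechanism as your splitting of the convolution), and the stochastic convolution is treated by invoking the standard continuity result for stochastic convolutions in \cite{Da Prato}, i.e.\ the factorization method you describe. The only point to tighten is that the moment bound $\mathbb{E}\int_0^T\|A^{\alpha+\epsilon}Y^\varepsilon(s)\|_H^p\,ds<\infty$ does not follow directly from the merely $\mathbb{P}$-a.s.\ bound $u^\varepsilon\in L^\infty([0,T],D(A^\alpha))$; one should localize with stopping times, as the paper implicitly does by using only the pathwise square-integrability of $A^\alpha\sigma(u(\cdot))$.
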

\begin{proof}
 We will use the following facts:

 (F1)
  $\{e^{t \Delta}\}_{t>0}$ is strongly continuous in $C^0(\bar{\mathcal{O}})$ due to the maximum principle(see Theorem 8.1.1 in \cite{JJ}).

  (F2) $e^{-t A}$ is
     strong continuous  in $L^2(\mathcal{O})$.

     (F3) For $q>2$ and $\alpha\in(\frac{1}{2},1)$,  the continuous imbeddings $W^{1,q}(\mathcal{O})\hookrightarrow C^0(\bar{\mathcal{O}})$ and  $D(A^\alpha)\hookrightarrow C^0(\bar{\mathcal{O}})$ hold, respectively;

     (F4)
    Let $B$ denote the operator $-\Delta+1$ in $L^q(\mathcal{O})$ ($q>2$) equipped with the Neumann boundary condition.
    Then, for $\beta\in(\frac{1}{q},\frac{1}{2})$, we have the continuous imbedding $D(B^\beta)\hookrightarrow C^0(\bar{\mathcal{O}})$.

     For convenience of expression, we will denote $(n^\varepsilon,c^\varepsilon,u^\varepsilon)$ as $(n,c,u)$ in the following proof.
    By Definition \ref{def mild solution}, for any $0\leqslant s <t\leqslant T$,
    \begin{align}
    	& n(t)=e^{(t-s)\delta \Delta}n(s)-\int_s^te^{(t-l)\delta \Delta}\Big\{u(l)\cdot \nabla n(l)\Big\}dl-\int_s^te^{(t-l)\delta \Delta}\Big\{\nabla\cdot\Big(\chi(c(l))n(l)\nabla c(l)\Big)\Big\}dl,\nonumber\\
    	& c(t)=e^{(t-s)\mu \Delta}c(s)-\int_s^te^{(t-l)\mu \Delta}\Big\{u(l)\cdot \nabla c(l)\Big\}dl-\int_s^te^{(t-l)\mu \Delta}\Big\{k(c(l))n(l)\Big\}dl,\nonumber\\
    	& u(t)=e^{-(t-s)\nu A}u(s)-\int_s^te^{-(t-l)\nu A}\mathcal{P}\Big\{(u(l)\cdot \nabla)u(l)\Big\}dl-\int_s^te^{-(t-l)\nu A}\mathcal{P}\Big\{n(l)\nabla\phi \Big\}dl \nonumber\\
    	&\quad\quad\quad\quad \quad\quad\quad\quad\quad\quad\quad\quad\quad\quad\quad\quad\quad+\int_s^te^{-(t-l)\nu A}\sigma(u(l))dW_l.\nonumber
    \end{align}

    To simplify the exposition, we assume $\delta=\mu=\nu=1$, $\chi(c)=1$, and $k(c)=c$. The general case could be established by Condition (H.1)(a)(b) and $\|c\|_\infty\leqslant C\|c\|_{1,q}$.

    Using that $\nabla\cdot (nu)=u\cdot \nabla n$ due to the fact that $\nabla\cdot u=0 $, we can thus obtain,  for any $ t\in[s,T]$,
    \begin{align}\label{eq Lem2.1 01}
    	&\|n(t)-n(s)\|_{L^\infty}\nonumber\\
    	\leqslant& \|e^{(t-s) \Delta}n(s)-n(s)\|_{L^\infty}+ C \int_s^t\|B^\beta e^{-(t-l)(B-1)}\nabla \cdot (n\nabla c)(l)\|_{L^q}dl\nonumber\\
    	&+
    	C \int_s^t\|B^\beta e^{-(t-l)(B-1)}\Big(\nabla\cdot (un)(l)\Big)\|_{L^q}dl\nonumber\\
    \leqslant	&
    	\|e^{(t-s) \Delta}n(s)-n(s)\|_{L^\infty} + C \int_s^t (t-l)^{ -\frac{1}{2}-\beta}\| (n\nabla c)(l)\|_{L^q}dl
    	+
    	C \int_s^t (t-l)^{-\frac{1}{2} -\beta}\|(un)(l)\|_{L^q}dl\nonumber\\
    	\leqslant&
    	\|e^{(t-s) \Delta}n(s)-n(s)\|_{L^\infty} + C (t-s)^{\frac{1}{2}-\beta }\sup_{s\leqslant l\leqslant t}\big(\|n(l)\|_{L^\infty}\cdot\|c(l)\|_{1,q}+\|u(l)\|_\alpha\cdot\|n(l)\|_{L^\infty}\big).
    \end{align}
    We have used the following property for semigroup $e^{-tB}$; see  Lemma $1.3(\romannumeral4)$ in \cite{Winkler 10}:
 \begin{align}
        \|B^\beta e^{-t(B-1)}\nabla\cdot w\|_{L^q}\leqslant C(1+ t^{-\frac{1}{2}-\beta })\|w\|_{L^q}, \ \ {\rm for}\  w\in L^q(\mathcal{O}).
    \end{align}


    By combining equations (\ref{eq Lem2.1 01}) and (\ref{ncu}), we can observe that,
    \begin{equation*}
    	\lim\limits_{t\rightarrow s}\|n(t)-n(s)\|_{L^\infty}=0,
    \end{equation*}
    which implies that $n\in C([0,T],C^0(\bar{\mathcal{O}}))$.

    Proceeding similarly as (\ref{eq Lem2.1 01}), for $\gamma \in (\frac{1}{2},1)$ and $ t\in[s,T]$,
    \begin{align}\label{eq Lem2.1 03}
    	&\|c(t)-c(s)\|_{L^\infty}\nonumber\\
    	\leqslant& \|e^{(t-s) \Delta}c(s)-c(s)\|_{L^\infty}+ \int_s^t\|B^\gamma e^{-(t-l)(B-1)}\Big\{u(l)\cdot \nabla c(l)+c(l)n(l)\Big\}\|_{L^q}dl\nonumber\\
    	\leqslant&
    	\|e^{(t-s) \Delta}c(s)-c(s)\|_{L^\infty} + \int_s^t (t-l)^{-\gamma}\|u(l)\cdot \nabla c(l)+c(l)n(l)\|_{L^q}dl\nonumber\\
    	\leqslant&
    	\|e^{(t-s) \Delta}c(s)-c(s)\|_{L^\infty} + (t-s)^{1-\gamma}\sup_{s\leqslant l\leqslant t}\big(\|u(l)\|_\alpha\cdot\|c(l)\|_{1,q}+\|c(l)\|_{1,q}\cdot\|n(l)\|_{L^\infty}\big).
    \end{align}
    Thus, by combining equations (\ref{ncu}) and (\ref{eq Lem2.1 03}), we conclude that
    \begin{equation*}
    	\lim\limits_{t\rightarrow s}\|c(t)-c(s)\|_{L^\infty}=0,
    \end{equation*}
    this implies that $c\in C([0,T],C^0(\bar{\mathcal{O}}))$.

    For $u(t)$, the following inequality holds:
    \begin{align}\label{eq Lem2.1 04}
    	& \|A^\alpha u(t)-A^\alpha u(s)\|_{L^2}\nonumber\\
    	\leqslant&
    	\|e^{-(t-s)A}A^\alpha u(s)-A^\alpha u(s)\|_{L^2} + \int_s^t\|e^{-(t-l)A}A^\alpha \mathcal{P}\{(u(l)\cdot \nabla)u(l)\}\|_{L^2}dl\nonumber\\
    	&+
    	\int_s^t\|e^{-(t-l)A}A^\alpha \mathcal{P}\{n(l)\nabla\phi\}\|_{L^2} dl
    	+
    	\|\int_s^te^{-(t-l)A}A^\alpha\sigma(u(l))dW_l\|_{L^2}\nonumber\\
    	:=&
    	\|e^{-(t-s)A}A^\alpha u(s)-A^\alpha u(s)\|_{L^2}+I_1(t,s)+I_2(t,s)+I_3(t,s).
    \end{align}
    Noticing the inequality:
    \begin{eqnarray}\label{eq 2024 02 03}
        \|(u\cdot\nabla )u\|_{L^2}\leqslant\|u\|_{L^\infty}\|\nabla u\|_{L^2} \leqslant C \|A^\alpha u\|^2_2.
    \end{eqnarray}
    We can conclude that:
    \begin{align}\label{eq Lem2.1 I1}
    	I_1(t,s)
    	\leqslant
    	C\int_s^t(t-l)^{-\alpha}\|A^\alpha u(l)\|^2_2dl
    	\leqslant
    	C(t-s)^{1-\alpha}\sup_{s\leqslant l\leqslant t}\| u(l)\|^2_\alpha.
    \end{align}

    Regarding $I_2$, we have:
    \begin{align}\label{eq Lem2.1 I2}
    	I_2(t,s)
    	&\leqslant
    	\int_s^t(t-l)^{-\alpha}\|n(l)\nabla\phi\|_{L^2}dl
    	\leqslant
    	\sup_{s\leqslant l\leqslant t}\|n(l)\|_{L^\infty} \|\nabla\phi\|_{L^\infty}(t-s)^{1-\alpha}\nonumber\\
    	&\leqslant
    	C\sup_{s\leqslant l\leqslant t}\|n(l)\|_{L^\infty} (t-s)^{1-\alpha}.
    \end{align}

    To estimate $I_3$, according to assumption (H.4) and (\ref{ncu}), we can easily obtain
    \begin{align*}
    	\int_s^T
    	\|A^\alpha\sigma(u(l))\|_{\mathcal{L}^2_0}^2dl
    	\leqslant C\int_s^T \|A^\alpha u(l)\|_{L^2}^2dl<\infty,\ \mathbb{P}\text{-a.s.}.
    \end{align*}
 Then it is well-known that $\int_s^te^{-(t-l)A}A^\alpha\sigma(u(l))dW_l$ has a continuous modification in $H$; see, e.g., { \cite[Theorem 6.10(page 160)]{Da Prato}}.
    Hence \begin{align}\label{eq Lem2.1 I3}
    	\lim_{t\rightarrow s}I_3(t)=I_3(s)=0
    \end{align}

    By combining (\ref{eq Lem2.1 04})--(\ref{eq Lem2.1 I3}),
    \begin{align}
    	\lim\limits_{t\rightarrow s}\|A^\alpha u(t)-A^\alpha u(s)\|_{L^2}
    	=0,\nonumber
    \end{align}
    implying that $u\in C([0,T],D(A^\alpha))$. The proof of Lemma 3.1 is complete.\hfill $\Box$
\end{proof}	
\vskip 0.2cm

The purpose of this paper is to establish a large deviation principle for the solution of (\ref{eq system 00}), denoted as $(n^\varepsilon,c^\varepsilon,u^\varepsilon)$ on $C([0,T],C^0(\bar{\mathcal{O}}))\times C([0,T],C^0(\bar{\mathcal{O}}))\times
C([0,T],D(A^\alpha)$ as $\varepsilon\rightarrow 0$. Before presenting the main result, we introduce the definition of LDP. Let $(\mathcal{E},\rho)$ be a Polish space. A lower-semicontinuous  function $I:\mathcal{E}\rightarrow [0,\infty]$ is called a rate function if the level set $\{\varphi\in \mathcal{E}: I(\varphi)\leqslant M\}$ is a compact subset of $\mathcal{E}$ for all $M\geqslant 0$.
\begin{definition}
	A family of $\mathcal{E}$-valued random variables $\{X^\varepsilon\}_{\varepsilon>0}$ is said to satisfy the LDP on $\mathcal{E}$ with rate function $I$ if for each Borel subset $B$ of $\mathcal{E}$,
	\begin{align*}
		-\inf_{\varphi\in\mathring{B}}I(\varphi)\leqslant
		\liminf_{\varepsilon\rightarrow 0}\varepsilon^2\log\mathbb{P}(X^\varepsilon\in B)
		\leqslant
		\limsup_{\varepsilon\rightarrow0}\varepsilon^2\log\mathbb{P}(X^\varepsilon\in B)
		\leqslant
		-\inf_{\varphi\in\overline{B}}I(\varphi),
	\end{align*}
 {where $\mathring{B}$ and $\overline{B}$ are the interior and the closure of $B$ in $\mathcal{E}$}, respectively.
\end{definition}

We also need to introduce the so-called skeleton equation, which is used to define the rate function in our main result, for any $h\in L^2([0,T],U)$,
\begin{align}\label{eq skeleton}
& dn^h+u^h\cdot \nabla n^h dt=\delta \Delta n^h dt-\nabla\cdot(\chi(c^h)n^h\nabla c^h)dt,\nonumber\\
& dc^h+u^h\cdot \nabla c^h dt=\mu \Delta c^h dt-k(c^h)n^h dt,\nonumber\\
& du^h+(u^h\cdot \nabla)u^h dt+\nabla P^h dt=\nu \Delta u^h dt-n^h\nabla\phi ~dt+\sigma(u^h )h dt,\\
& \nabla\cdot u^h=0,\ \ \ \ \ t>0,\ x\in\mathcal{O},\nonumber
\end{align}
with the boundary conditions
\begin{align}\label{eq h boundary condition 1}
\frac{\partial n^h}{\partial v}=\frac{\partial c^h}{\partial v}=0\text{  and  }u^h=0\ \ \text{   for }x\in\partial\mathcal{O}\text{ and }t>0,
\end{align}
and the initial conditions
\begin{align}\label{eq h boundary condition 2}
n^h(0,x)=n_0(x),\ \ \ c^h(0,x)=c_0(x),\ \ \ u^h(0,x)=u_0(x),\ \ \ x\in\mathcal{O}.
\end{align}
Here $L^2([0, T] , U)$ denotes the set of square integrable $U$-valued functions on $[0, T]$ endowed with the norm
$$\|\varphi\|_{L^2([0, T] , U)}^2=\int_{0}^{T}\|\varphi(t)\|^2_Udt.$$
We state the following result, whose proof is provided in Section \ref{skeleton}.
\begin{proposition}\label{Thm skeleton1}
	Assume that requirements of Proposition \ref{Thm main 1} are met. Then, for any $h\in L^2([0,T],U)$, there exists a unique mild/variational solution $(n^h,c^h,u^h)\in
 C([0,T],C^0(\bar{\mathcal{O}}))
    \times
    C([0,T],C^0(\bar{\mathcal{O}}))
    \times
C([0,T],D(A^\alpha))$ to the system (\ref{eq skeleton}) with the boundary-initial condition (\ref{eq h boundary condition 1}) and (\ref{eq h boundary condition 2}). That is, $(n^h,c^h,u^h)$ satisfies:
	\begin{align}\label{eq skeleton 1}
		& n^h(t)=e^{t\delta \Delta}n_0-\int_0^te^{(t-s)\delta \Delta}\Big\{u^h(s)\cdot \nabla n^h(s)\Big\}ds-\int_0^te^{(t-s)\delta \Delta}\Big\{\nabla\cdot\Big(\chi(c^h(s))n^h(s)\nabla c^h(s)\Big)\Big\}ds,\nonumber\\
		& c^h(t)=e^{t\mu \Delta}c_0-\int_0^te^{(t-s)\mu \Delta}\Big\{u^h(s)\cdot \nabla c^h(s)\Big\}ds-\int_0^te^{(t-s)\mu \Delta}\Big\{k(c^h(s))n^h(s)\Big\}ds,\nonumber\\
		& u^h(t)=e^{-t\nu A}u_0-\int_0^te^{-(t-s)\nu A}\mathcal{P}\Big\{(u^h(s)\cdot \nabla)u^h(s)\Big\}ds-\int_0^te^{-(t-s)\nu A}\mathcal{P}\Big\{n^h(s)\nabla\phi \Big\}ds \nonumber\\
		&\quad\quad\quad\quad \quad\quad\quad\quad\quad\quad\quad\quad\quad\quad\quad\quad\quad+\int_0^te^{-(t-s)\nu A}\sigma(u^h(s))h(s)ds.
	\end{align}
\end{proposition}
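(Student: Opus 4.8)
The plan is to follow the three-step programme of \cite{Zhai Zhang 2020} for the stochastic system, now specialised to a fixed (pathwise) control $h$: first prove local existence by a contraction argument in a space of continuous functions, then derive global-in-time a priori bounds to rule out blow-up, and finally deduce the time-continuity exactly as in Proposition \ref{regularity}; uniqueness will follow from an energy estimate. As in \cite{Zhai Zhang 2020} the mild solution is simultaneously a variational solution of (\ref{eq skeleton}), so both assertions follow together. The only term not already present in \cite{Zhai Zhang 2020} is the drift $\int_0^t e^{-(t-s)\nu A}\sigma(u^h(s))h(s)\,ds$, which replaces the stochastic convolution. Since $e^{-tA}$ is a contraction semigroup on $H$ commuting with $A^\alpha$, $h\in L^2([0,T],U)$, and, by {\rm(H.3)--(H.5)}, $\|\sigma(v)\|_{\mathcal L^2_\beta}\leqslant C(1+\|v\|_\beta)$ for $\beta\in\{0,\tfrac12,\alpha\}$, this term is in fact gentler than the It\^o integral it replaces: for $0\leqslant s\leqslant t\leqslant T$,
\begin{align}
\Big\|A^\alpha\int_s^t e^{-(t-l)\nu A}\sigma(u^h(l))h(l)\,dl\Big\|_{L^2}
&\leqslant \int_s^t\|\sigma(u^h(l))\|_{\mathcal L^2_\alpha}\|h(l)\|_U\,dl\nonumber\\
&\leqslant C\big(1+\sup_{s\leqslant l\leqslant t}\|u^h(l)\|_\alpha\big)\,(t-s)^{1/2}\,\|h\|_{L^2([0,T],U)},\label{eq skel control}
\end{align}
so the control contributes a factor carrying a positive power of the time increment, with a constant depending on $h$ only through $\|h\|_{L^2([0,T],U)}$.

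For \emph{local existence}, fix $T>0$ and, for $\tau\in(0,T]$, let $\Phi$ be the map on $\mathcal X_\tau:=C([0,\tau],C^0(\bar{\mathcal O}))\times C([0,\tau],W^{1,q}(\mathcal O))\times C([0,\tau],D(A^\alpha))$ given by the right-hand sides of (\ref{eq skeleton 1}). Using the smoothing bounds for the Neumann heat semigroup recalled after (\ref{eq Lem2.1 01}) (in particular $\|B^\beta e^{-t(B-1)}\nabla\cdot w\|_{L^q}\leqslant C(1+t^{-\frac12-\beta})\|w\|_{L^q}$), the analogous bounds for $e^{-tA}$, the embeddings (F3)--(F4), the local Lipschitz continuity of $\chi$ and $k$ on bounded sets coming from {\rm(H.1)}, the Lipschitz bound in {\rm(H.4)}, and the estimate (\ref{eq skel control}), one checks that $\Phi$ maps a suitable closed ball of $\mathcal X_\tau$ into itself and is a strict contraction there once $\tau$ is small enough; Banach's fixed point theorem then yields a unique local mild solution. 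Every estimate, including the one for the control term, produces a positive power of $\tau$, so $\tau$ can be chosen depending only on the data and $\|h\|_{L^2([0,T],U)}$.

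The heart of the proof is the \emph{global a priori estimate}. As in \cite{Zhai Zhang 2020}, one successively establishes: positivity $n^h>0$, $c^h>0$ and $\|c^h(t)\|_{L^\infty}\leqslant\|c_0\|_{L^\infty}$ by comparison (using $k(0)=0$); conservation of mass $\int_{\mathcal O}n^h(t)=\int_{\mathcal O}n_0$ (using $\nabla\cdot u^h=0$ and the no-flux condition); the quasi-energy/entropy functional estimates that rely crucially on the structural conditions {\rm(H.2)}; and the Winkler-type bootstrap (cf.\ \cite{Winkler 10}) passing from $\|\nabla c^h\|_{L^q}$ to $\|n^h\|_{L^p}$ for every $p$, then to $\|n^h\|_{L^\infty}$, then to $\|c^h\|_{1,q}$, and finally, via the mild formula for $u^h$ together with (\ref{eq 2024 02 03}) and the $D(A^{1/2})$ energy estimate, to $\|u^h\|_\alpha$. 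The only modification is that the energy identities for $u^h$ (at the $H$- and $D(A^{1/2})$-levels) now carry extra forcing contributions of the form $\langle\sigma(u^h)h,u^h\rangle_{H}$ and $\langle\sigma(u^h)h,Au^h\rangle$, which by {\rm(H.3)}, {\rm(H.5)} and Young's inequality are dominated by $C(1+\|h(t)\|_U^2)(1+\|u^h(t)\|_{1/2}^2)$ plus an arbitrarily small multiple of the dissipation, while in the mild bound for $\|u^h(t)\|_\alpha$ the control term is handled additively via (\ref{eq skel control}); since $t\mapsto 1+\|h(t)\|_U^2$ and $t\mapsto\|h(t)\|_U$ are integrable on $[0,T]$, Gronwall's lemma still applies and all the resulting bounds depend on the data, $T$ and $\|h\|_{L^2([0,T],U)}$ only. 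Consequently the local solution extends to all of $[0,T]$ and satisfies the analogue of (\ref{ncu}).

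\emph{Uniqueness} follows by taking two solutions with the same data, writing the equations for the differences $(n_1^h-n_2^h,c_1^h-c_2^h,u_1^h-u_2^h)$, performing $L^2$-type energy estimates in which every nonlinear term is controlled by the a priori bounds just obtained, and bounding the control contribution through the Lipschitz estimates in {\rm(H.3)--(H.4)}, e.g.\ $\|(\sigma(u_1^h(t))-\sigma(u_2^h(t)))h(t)\|\leqslant C\|u_1^h(t)-u_2^h(t)\|\,\|h(t)\|_U$; Gronwall's inequality with the integrable weight $\|h(t)\|_U^2$ then forces the differences to vanish. Finally, \emph{time continuity} in $C([0,T],C^0(\bar{\mathcal O}))\times C([0,T],C^0(\bar{\mathcal O}))\times C([0,T],D(A^\alpha))$ is obtained by repeating verbatim the computations (\ref{eq Lem2.1 01}), (\ref{eq Lem2.1 03}) and (\ref{eq Lem2.1 04})--(\ref{eq Lem2.1 I3}) of Proposition \ref{regularity}, the only new term being the control term, whose increment tends to $0$ as $t\to s$ by (\ref{eq skel control}). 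The hard part is therefore not the control term but the faithful reproduction of the entropy/bootstrap machinery of \cite{Zhai Zhang 2020,Winkler 10} in the deterministic controlled setting, keeping careful track of the dependence of every constant on $\|h\|_{L^2([0,T],U)}$, which is precisely what is needed for the weak-convergence arguments of Sections \ref{a} and \ref{b}.
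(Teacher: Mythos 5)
Your proposal is correct and follows essentially the same route as the paper: a fixed-point argument for local existence (the paper truncates the nonlinearities with a cut-off $\theta_m$ rather than restricting to a closed ball, but this is the same device), followed by the Winkler-type entropy/bootstrap a priori estimates in which the control term is absorbed via (H.3)--(H.5), H\"older and Gronwall with the integrable weight $1+\|h(t)\|_U^2$, and finally the continuity argument of Proposition \ref{regularity}. Your key observation --- that the drift $\int_0^t e^{-(t-s)\nu A}\sigma(u^h)h\,ds$ contributes a positive power of the time increment with constants depending on $h$ only through $\|h\|_{L^2([0,T],U)}$ --- is exactly the estimate the paper uses in (\ref{eq Phi3 esta1}) and (\ref{3.11}).
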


We now formulate the main result.
\begin{theorem}\label{thm}
    Assume that requirements of Proposition \ref{Thm main 1} are met.
Then as $\varepsilon\rightarrow 0$, the  solution to the system (\ref{eq system 00}) $\{(n^\varepsilon,c^\varepsilon,u^\varepsilon)\}_{\varepsilon>0}$ satisfies  LDP on $C([0,T],C^0(\bar{\mathcal{O}}))\times C([0,T],C^0(\bar{\mathcal{O}}))\times
C([0,T],D(A^\alpha)$ with the rate function $I$, given by
    	$$I(\varphi)=\inf_{h\in L^2([0, T] ,U)}\left\lbrace\frac{1}{2}\int_{0}^T\|h(s)\|_U^2ds:\varphi=(n^h,c^h,u^h)\right\rbrace, $$
     where $(n^h,c^h,u^h)$ is the solution to the skeleton equation (\ref{eq skeleton}). Here, we use the convention that the infimum of an empty set is $\infty$.

\end{theorem}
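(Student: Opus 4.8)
The plan is to apply the weak convergence criterion of Budhiraja--Dupuis--Maroulas, in the refined form due to Matoussi--Sabbagh--Zhang, on the Polish space $\mathcal{E}:=C([0,T],C^0(\bar{\mathcal{O}}))\times C([0,T],C^0(\bar{\mathcal{O}}))\times C([0,T],D(A^\alpha))$. The rate function $I$ given in the statement is exactly the one associated to the skeleton map $\mathcal{G}^0:h\mapsto(n^h,c^h,u^h)$, which is well defined by Proposition \ref{Thm skeleton1}; the existence of the stochastic solution on $\mathcal{E}$ is guaranteed by Propositions \ref{Thm main 1} and \ref{regularity}, so the solution can be represented as $(n^\varepsilon,c^\varepsilon,u^\varepsilon)=\mathcal{G}^\varepsilon(W_\cdot+\varepsilon^{-1}\int_0^\cdot h_s\,ds)$ for the appropriate measurable maps $\mathcal{G}^\varepsilon$ (Girsanov / Yamada--Watanabe). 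It then suffices to verify two conditions: (a) for any $M<\infty$, if $h_n\rightharpoonup h$ weakly in $S_M:=\{h\in L^2([0,T],U):\tfrac12\int_0^T\|h(s)\|_U^2\,ds\le M\}$, then $\mathcal{G}^0(h_n)\to\mathcal{G}^0(h)$ in $\mathcal{E}$; and (b) for any family $\{h_\varepsilon\}\subset S_M$ of predictable controls, the controlled stochastic equations $\mathcal{G}^\varepsilon(W_\cdot+\varepsilon^{-1}\int_0^\cdot h_\varepsilon(s)\,ds)$ converge in distribution on $\mathcal{E}$ to $\mathcal{G}^0(h)$ whenever $h_\varepsilon\Rightarrow h$ as $S_M$-valued random variables. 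These are precisely the two conditions whose verification is deferred to Sections \ref{a} and \ref{b}; here I would simply state that Theorem \ref{thm} follows once (a) and (b) are established, and organize the remainder of the paper around them.

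For condition (a) I would proceed along the lines indicated in the introduction, using finite-dimensional projection arguments rather than a compact embedding. Fix $M$, take $h_n\rightharpoonup h$ in $S_M$, and write $(n_n,c_n,u_n):=(n^{h_n},c^{h_n},u^{h_n})$. Step one: derive uniform-in-$n$ a priori bounds for $(n_n,c_n,u_n)$ in $L^\infty([0,T],C^0(\bar{\mathcal O}))\times L^\infty([0,T],W^{1,q}(\mathcal O))\times L^\infty([0,T],D(A^\alpha))$, together with the needed higher-regularity estimates; these come from the deterministic energy/semigroup estimates developed for Proposition \ref{Thm skeleton1}, uniformly over $S_M$ because $\int_0^T\|h_n\|_U^2\,ds\le 2M$. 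Step two: introduce the stopping-time–type truncations $\tau_M$, $\tau$ alluded to in the introduction to control the nonlinearities (the convective terms $u\cdot\nabla n$, $u\cdot\nabla c$, $(u\cdot\nabla)u$ and the chemotactic term $\nabla\cdot(\chi(c)n\nabla c)$), so that on each truncated interval the equations are effectively semilinear with Lipschitz-type structure. Step three: project onto the span of the first $N$ eigenfunctions (of the Neumann Laplacian for $n,c$ and of $A$ for $u$), pass to the limit in $n$ for the $N$-dimensional system using that the map $h\mapsto\int_0^\cdot P_N\sigma(\cdot)h(s)\,ds$ is continuous from $S_M$ (weak) into $C([0,T];\cdot)$ for finite $N$, and then let $N\to\infty$, using the uniform bounds from Step one to control the tail $I-P_N$ uniformly in $n$. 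Combining these gives $(n_n,c_n,u_n)\to(n^h,c^h,u^h)$ in $\mathcal{E}$.

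For condition (b) I would follow the same scheme applied to the controlled stochastic equations: write the equation for $(n^\varepsilon,c^\varepsilon,u^\varepsilon)$ driven by $W+\varepsilon^{-1}\int_0^\cdot h_\varepsilon\,ds$, obtain a priori moment bounds uniform in $\varepsilon$ (again uniformly over controls in $S_M$), show that the stochastic integral term $\varepsilon\int_0^\cdot e^{-(t-s)\nu A}\sigma(u^\varepsilon(s))\,dW_s$ tends to $0$ in probability in $C([0,T],D(A^\alpha))$ by a Burkholder--Davis--Gundy / factorization estimate using (H.4), and then repeat the finite-dimensional projection and stopping-time argument to identify the limit of the control term $\int_0^\cdot e^{-(t-s)\nu A}\sigma(u^\varepsilon(s))h_\varepsilon(s)\,ds$ with $\int_0^\cdot e^{-(t-s)\nu A}\sigma(u^h(s))h(s)\,ds$; by uniqueness for the skeleton equation (Proposition \ref{Thm skeleton1}) the limit point is $\mathcal{G}^0(h)$, and since all subsequential limits coincide we get convergence in distribution on $\mathcal{E}$. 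Throughout, tightness is obtained not from compact embeddings but from the equicontinuity-in-time estimates of the type already proved in Proposition \ref{regularity} combined with the uniform spatial bounds.

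The main obstacle is Step two/three of condition (a) — and its analogue in (b): passing to the limit through the strongly nonlinear and cross-coupled terms (especially the chemotaxis term $\nabla\cdot(\chi(c)n\nabla c)$ and the Navier--Stokes convection $(u\cdot\nabla)u$) without a compact Sobolev embedding into the solution space. The finite-dimensional projection converts weak convergence of the controls into strong convergence of the finite-dimensional pieces, but one must then show the projection error is uniformly small; this forces one to prove somewhat delicate uniform estimates (in the right norms, using the smoothing of the Neumann heat and Stokes semigroups as in \eqref{eq Lem2.1 01}) and to use the stopping times $\tau_M,\tau$ to keep the relevant norms bounded while the limit is taken. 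Getting these uniform bounds and the commutation of the $N\to\infty$ and $n\to\infty$ limits to work simultaneously is the technical heart of the argument; everything else is a fairly standard application of the weak-convergence LDP machinery.
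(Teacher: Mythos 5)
Your overall reduction is the same as the paper's: represent the solution via measurable maps $\mathcal{G}^\varepsilon$, $\mathcal{G}^0$ (Yamada--Watanabe, Girsanov, Propositions \ref{Thm main 1}--\ref{Thm skeleton1}), and reduce the LDP to two conditions on the skeleton map and the controlled equations; your verification strategy for condition (a) (uniform bounds over $S^N$, finite-dimensional projection of the control term, then letting the projection dimension tend to infinity) is also essentially what the paper does in Section \ref{a}.

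Where you diverge is in the formulation and verification of condition (b). The paper invokes the Matoussi--Sabbagh--Zhang refinement of the Budhiraja--Dupuis--Maroulas criterion, whose second condition only asks that $\rho(Y^\varepsilon,Z^\varepsilon)\to 0$ in probability, where $Z^\varepsilon=\mathcal{G}^0(h^\varepsilon)$ is the skeleton solution driven by the \emph{same} random control; this is verified in Section \ref{b} by a direct Gronwall-type estimate on $\|(n^\varepsilon,c^\varepsilon,u^\varepsilon)-(n^0,c^0,u^0)\|_{S_t}$ localized by the stopping times $\tau_M^{\varepsilon,R}$, with no tightness or identification-of-limit step. You instead state the original BDM condition (convergence in distribution of $\mathcal{G}^\varepsilon(W+\varepsilon^{-1}\int_0^\cdot h_\varepsilon)$ to $\mathcal{G}^0(h)$ when $h_\varepsilon\Rightarrow h$) and propose to prove it via tightness plus uniqueness of subsequential limits. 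The gap is your tightness claim: equicontinuity in time together with uniform bounds in $C^0(\bar{\mathcal{O}})$ (resp.\ $D(A^\alpha)$) does \emph{not} yield relative compactness in $C([0,T],C^0(\bar{\mathcal{O}}))$ (resp.\ $C([0,T],D(A^\alpha))$), since Arzel\`a--Ascoli in $C([0,T],X)$ with $X$ infinite-dimensional requires the time-sections to lie in a compact subset of $X$, i.e.\ uniform bounds in a space compactly embedded in $X$ --- precisely the compact embedding whose absence the paper identifies as the main difficulty. As written, your route for (b) therefore does not close; the MSZ formulation (or an equivalent direct difference estimate) is needed to avoid the tightness step altogether.
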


\begin{proof}
Proposition \ref{Thm skeleton1} implies that there exists a measurable mapping $$\mathcal{G}^0: L^2([0, T] , U)\rightarrow C([0,T],C^0(\bar{\mathcal{O}}))\times C([0,T],C^0(\bar{\mathcal{O}}))\times
C([0,T],D(A^\alpha),$$
such that $\mathcal{G}^0(h)$ is the solution to (\ref{eq skeleton}) for any $h\in L^2([0, T] , U)$.

By Yamada-Watanabe's Theorem (cf. \cite{RSZ}) and Propositions \ref{Thm main 1}, \ref{regularity}, there exists a measurable map
$$\mathcal{G}^\varepsilon: C([0,T],U)\rightarrow C([0,T],C^0(\bar{\mathcal{O}}))\times C([0,T],C^0(\bar{\mathcal{O}}))\times
C([0,T],D(A^\alpha),$$
such that $\mathcal{G}^\varepsilon(W_\cdot)$ is the solution to (\ref{eq system 00}) for any $U$-valued Brownian motion $W_\cdot$.

For any $N\in \mathbb{N}$, set
\begin{equation*}
	S^N:=\left\lbrace \varphi\in L^2([0,T],U):\|\varphi\|_{L^2([0, T] ,U)}^2\leqslant N\right\rbrace .
\end{equation*}
$S^N$ endowed with the weak topology of the space $L^2([0, T],U)$ is a Polish space(see e.g.\cite{BDM}). We also define
$$\mathcal{P}^N:=\left\lbrace \Phi :{ \Phi\  is\ U\text{-}valued\ \mathcal{F}_t\text{-}predictable\ process\ such\ that\ \Phi(\omega)\in S^N,\mathbb{P}\text{-}a.s.}\right\rbrace. $$

Using \cite[Theorem 3.1]{MSZ}, in order to prove Theorem \ref{thm}, we only need to verify the following condition:
\begin{Cod}\label{cod}
(a)For every $N< \infty$, $\{h^i\}_{i\in\mathbb{N}}\subset S^N$ that converges weakly to some element $h$ in $L^2([0,T],U)$ as $i\rightarrow \infty$,
	$$\lim\limits_{i\rightarrow \infty}\rho (\mathcal{G}^0\left( h^i\right),\mathcal{G}^0\left( h\right))=0.$$
	
	(b)For every $N<\infty$, any family $\{h^\varepsilon\}_{\varepsilon>0}\subset \mathcal{P}_N$ and any $\delta>0$
	$$\lim\limits_{\varepsilon\rightarrow 0}\mathbb{P}(\rho (Y^\varepsilon,Z^\varepsilon)>\delta)=0.$$
	Here $Y^\varepsilon=\mathcal{G}^\varepsilon(W_\cdot+\frac{1}{\varepsilon}\int_0^\cdot h^\varepsilon(s)ds),Z^\varepsilon=\mathcal{G}^0( h^\varepsilon)$ and $\rho$ is the metric of  $C([0,T],C^0(\bar{\mathcal{O}}))\times C([0,T],C^0(\bar{\mathcal{O}}))\times
C([0,T],D(A^\alpha)$.
\end{Cod}
We will check (a) and (b) in Section \ref{a}  and \ref{b}, respectively.
\hfill $\Box$
\end{proof}

We end this section by introducing the following generalized Gronwall-Bellman inequality, which plays an important role in Sections \ref{a}  and \ref{b}. The proof is similar to that of  \cite{W} and \cite{YGD}, and we omit it here.

\begin{lemma} \label{GGI}

    Let $0 <\alpha_1\leqslant\alpha_2\leqslant\alpha_3< 1$ and consider the time interval $I = [0, T)$, where $T \leqslant \infty$. Suppose $a(t)$ is a
nonnegative function that is locally integrable on $I$, and $b(t), g_1(t),g_2(t),$ and $g_3(t)$ are nonnegative, nondecreasing
continuous functions defined on $I$ and bounded by a positive constant $M$. If $f(t)$ is nonnegative
and locally integrable on $I$, and satisfies
\begin{align}\label{ft}
    f(t)\leqslant a(t)+b(t)\int_0^tf(s)ds+\sum_{\xi=1}^3g_\xi(t)\int_0^t(t-s)^{\alpha_\xi-1}f(s)ds,
\end{align}
then
\begin{align}\label{ft}
    f(t)\leqslant a(t)+&\sum_{n=1}^\infty\sum_{\substack{i,j,k,l\geqslant 0\\i+j+k+l=n}}\frac{n!(b(t))^i(g_1(t)\Gamma(\alpha_1))^j(g_2(t)\Gamma(\alpha_2))^k(g_3(t)\Gamma(\alpha_3))^l}{i!j!k!l!\Gamma(i+j\alpha_1+k\alpha_2+l\alpha_3)}\nonumber\\
    &\times\int_0^t(t-s)^{i+j\alpha_1+k\alpha_2+l\alpha_3-1}a(s)ds,
\end{align}
where $\Gamma(\cdot)$ is the Gamma function. Moreover, if $a(t)$ is nondecreasing on $I$. Then
\begin{align}
    f(t)\leqslant a(t)E_{\alpha_1}(g(t)\Gamma(\alpha_1)t^{\alpha_1})E_{\alpha_2}(g(t)\Gamma(\alpha_2)t^{\alpha_2})E_{\alpha_3}(g(t)\Gamma(\alpha_3)t^{\alpha_3})e^{\frac{b(t)t}{\alpha_1}}.
\end{align}
Here, the Mittag-Leffler function  $E_{\varrho}(z)$ is  defined by $E_{\varrho}(z)=\sum\limits_{k=0}^\infty \frac{z^k}{\Gamma (k\varrho+1)}$ for $z > 0$, and $E_{\varrho}(z)<\infty$ when  $\varrho>0$.
\end{lemma}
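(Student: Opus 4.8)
The plan is to establish the generalized Gronwall--Bellman inequality (Lemma \ref{GGI}) by iterating the integral inequality \eqref{ft} and then summing the resulting series. First I would iterate the inequality once, substituting the bound for $f$ into the right-hand side. Writing the three convolution kernels in a unified way, each iteration produces a sum of terms of the form
$c\cdot b(t)^i g_1(t)^j g_2(t)^k g_3(t)^l\int_0^t(t-s)^{i+j\alpha_1+k\alpha_2+l\alpha_3-1}a(s)\,ds$
together with a remainder term which is a single $(n+1)$-fold nested integral of $f$. The key computational input is the Beta-function identity
$\int_0^t(t-s)^{p-1}\Big(\int_0^s(s-r)^{q-1}a(r)\,dr\Big)ds=B(p,q)\int_0^t(t-r)^{p+q-1}a(r)\,dr$,
valid for $p,q>0$ (and the analogue with an extra factor $b(t)$ or $g_\xi(t)$ pulled out using monotonicity), together with the multinomial bookkeeping of how an $n$-fold composition of the four kernels $b,g_1(t-s)^{\alpha_1-1},g_2(t-s)^{\alpha_2-1},g_3(t-s)^{\alpha_3-1}$ decomposes into $(i,j,k,l)$ with $i+j+k+l=n$; the coefficient $n!/(i!j!k!l!)$ counts the orderings, and the product of Beta functions telescopes into the single factor $\Gamma(i+j\alpha_1+k\alpha_2+l\alpha_3)$ in the denominator with $\Gamma(\alpha_\xi)^{(\cdot)}$ in the numerator.

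Second, I would show the remainder term vanishes as $n\to\infty$. Since $b,g_1,g_2,g_3$ are bounded by $M$ on $[0,T)$ and $\alpha_1\le\alpha_2\le\alpha_3<1$, on any compact subinterval $[0,T']\subset I$ the $n$-fold nested integral of the locally integrable function $f$ is bounded by $C(M,T',\alpha_1)^n\,(T')^{n\alpha_1}/\Gamma(n\alpha_1+1)\cdot\|f\|_{L^1([0,T'])}$ (using $\alpha_1$ as the worst exponent and the Beta identity repeatedly), which tends to $0$ because the Mittag-Leffler series $E_{\alpha_1}$ converges. This justifies passing to the limit and yields the series bound \eqref{ft}.

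Third, for the final Mittag-Leffler bound I would assume $a$ nondecreasing, pull $a(s)\le a(t)$ out of the integral, compute $\int_0^t(t-s)^{i+j\alpha_1+k\alpha_2+l\alpha_3-1}\,ds=t^{i+j\alpha_1+k\alpha_2+l\alpha_3}/(i+j\alpha_1+k\alpha_2+l\alpha_3)$, and bound $i+j\alpha_1+k\alpha_2+l\alpha_3\ge\alpha_1(i+j+k+l)$ crudely (or more carefully, use $\Gamma(i+j\alpha_1+k\alpha_2+l\alpha_3+1)$ in place of the denominator times $\Gamma(\cdot)$), so that the multinomial sum factorizes. Using $g(t):=\max_\xi g_\xi(t)$ and regrouping the multinomial sum as a product of four geometric-type series, one of which is $e^{b(t)t/\alpha_1}$ and the other three are $E_{\alpha_\xi}(g(t)\Gamma(\alpha_\xi)t^{\alpha_\xi})$, gives the stated estimate.

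The main obstacle will be the combinatorial bookkeeping in the iteration step: one must verify carefully that after $n$ iterations the coefficient of the $(i,j,k,l)$-term is exactly $n!\,b^i(g_1\Gamma(\alpha_1))^j(g_2\Gamma(\alpha_2))^k(g_3\Gamma(\alpha_3))^l/\big(i!j!k!l!\,\Gamma(i+j\alpha_1+k\alpha_2+l\alpha_3)\big)$, which requires an induction on $n$ where at each step one adds one more kernel (of one of the four types) to the front and uses the Beta identity $B(\alpha_\xi, i+j\alpha_1+k\alpha_2+l\alpha_3)=\Gamma(\alpha_\xi)\Gamma(i+j\alpha_1+k\alpha_2+l\alpha_3)/\Gamma(\alpha_\xi+i+j\alpha_1+k\alpha_2+l\alpha_3)$, together with the Pascal-type identity for multinomial coefficients. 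Since this is essentially the argument of \cite{W,YGD} adapted from one fractional kernel to a sum of one ordinary and three fractional kernels, I would only indicate the inductive structure and refer to those works for the routine details, exactly as the paper does.
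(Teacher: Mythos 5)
The paper gives no proof of this lemma, deferring entirely to \cite{W} and \cite{YGD}; your outline is exactly the standard iteration/Beta-function argument of those references (operator iteration, multinomial bookkeeping with telescoping Beta functions, a vanishing $n$-fold remainder, and the final factorization into Mittag-Leffler factors), so it matches the paper's intended route. The only step you treat loosely --- the inequality $\frac{n!}{i!\,j!\,k!\,l!\,\Gamma(i+j\alpha_1+k\alpha_2+l\alpha_3+1)}\leqslant \frac{1}{\alpha_1^{i}\, i!\,\Gamma(j\alpha_1+1)\Gamma(k\alpha_2+1)\Gamma(l\alpha_3+1)}$ needed to make the multinomial sum factor into $e^{b(t)t/\alpha_1}$ times the three Mittag-Leffler series --- is precisely the content of the cited lemma of \cite{W}, so deferring it is consistent with what the paper itself does.
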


\section{Proof of Proposition \ref{Thm skeleton1}}\label{skeleton}
\setcounter{equation}{0}
 In this section, our aim is to prove the following proposition regarding the existence, uniqueness of solutions for the skeleton equations (\ref{eq skeleton}) and provide a priori estimates for them. This proposition implies Proposition \ref{Thm skeleton1}.
The a priori estimates are devoted to obtain (a) in the proof of Theorem \ref{Thm main 1}.
\begin{proposition}\label{Thm skeleton}
	Assume that requirements of Proposition \ref{Thm main 1} are met. Then, for any $h\in S^N$, there exists a unique mild/variational solution $(n^h,c^h,u^h)\in
 C([0,T],C^0(\bar{\mathcal{O}}))
    \times
    C([0,T],C^0(\bar{\mathcal{O}}))
    \times
C([0,T],D(A^\alpha))$ to the system (\ref{eq skeleton}) with the boundary-initial condition (\ref{eq h boundary condition 1}) and (\ref{eq h boundary condition 2}), and there exists a constant $C$ depending on ${N,T,\|n_0\|_{L^\infty},\|c_0\|_{1,q},\|u_0\|_\alpha}$ such that
\begin{align}\label{CNT}
    \sup_{h\in S^N}\Big(
    \sup_{0\leqslant t\leqslant T}\|n^h(t)\|_{L^\infty}^2+\sup_{0\leqslant t\leqslant T}\|c^h(t)\|_{1,q}^2+\sup_{0\leqslant t\leqslant T}\|A^{\alpha}u^h(t)\|_{L^2}^2
    \Big)
    \leqslant C.
\end{align}
\end{proposition}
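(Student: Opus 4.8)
The plan is to combine a fixed-point / Picard-iteration argument for local existence and uniqueness with the a priori estimates \eqref{CNT}, which simultaneously yield global existence by a continuation argument. I would work in the product space $\mathcal{X}_T := C([0,T],C^0(\bar{\mathcal{O}}))\times C([0,T],C^0(\bar{\mathcal{O}}))\times C([0,T],D(A^\alpha))$, but actually it is technically cleaner to run the contraction in $C([0,T_0],C^0(\bar{\mathcal{O}}))\times C([0,T_0],W^{1,q}(\mathcal{O}))\times C([0,T_0],D(A^\alpha))$ for a small $T_0$, since the smoothing estimates for $e^{(t-s)\Delta}$, $e^{-(t-s)\nu A}$ require the $W^{1,q}$-norm on $c$ and the $D(A^\alpha)$-norm on $u$ to control the nonlinear terms $\nabla\cdot(\chi(c)n\nabla c)$, $u\cdot\nabla n$, $u\cdot\nabla c$, $(u\cdot\nabla)u$, $n\nabla\phi$, $k(c)n$, and $\sigma(u)h$. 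The drift term $\sigma(u)h$ is handled using (H.5): $\|\sigma(u^h)h\|_{D(A^{1/2})}\le \|\sigma(u^h)\|_{\mathcal{L}^2_{1/2}}\|h\|_U$ and then $\int_s^t\|e^{-(t-l)\nu A}A^\alpha\sigma(u^h(l))h(l)\|_{L^2}\,dl \le C\int_s^t (t-l)^{-(\alpha-1/2)}\|\sigma(u^h(l))\|_{\mathcal{L}^2_{1/2}}\|h(l)\|_U\,dl$, and since $\alpha-1/2<1/2$ the singularity is integrable and the Cauchy–Schwarz inequality in $l$ absorbs $\|h\|_{L^2([0,T],U)}\le N$; this is the main place where the control enters and it only improves as $T_0\to 0$ (giving a factor $T_0^{1-\alpha}$ or similar). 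The short-time fractional-power and semigroup estimates used are exactly those of \eqref{eq Lem2.1 01}--\eqref{eq Lem2.1 I3}, plus Lemma 1.3 of \cite{Winkler 10} and the smoothing of the Stokes semigroup $\|A^\alpha e^{-tA}\|_{L^2\to L^2}\le Ct^{-\alpha}$. The contraction constant acquires a positive power of $T_0$ (and depends continuously on the data), so for $T_0$ small depending only on $\|n_0\|_{L^\infty},\|c_0\|_{1,q},\|u_0\|_\alpha,N$ one gets a unique local mild solution; the regularity $c^h\in C([0,T_0],C^0(\bar{\mathcal{O}}))$ follows from $W^{1,q}\hookrightarrow C^0$, and $C([0,T_0],\cdot)$-continuity of all three components follows exactly as in the proof of Proposition \ref{regularity} once the local solution is known to lie in $L^\infty$.

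The second and central step is to establish the a priori bound \eqref{CNT}. Here I would essentially transcribe the deterministic ($\varepsilon=0$) energy/regularity estimates of \cite{Zhai Zhang 2020}, but keeping explicit track of the control term. Exploiting hypotheses (H.2) — which are precisely the structural conditions ($(k/\chi)'>0$, $(k/\chi)''\le 0$, $(\chi k)'\ge 0$) used by Winkler-type arguments to get an entropy-dissipation functional controlling $\int n\log n$, $\int |\nabla c|^2/c$ and related quantities — one derives first the basic bounds on $\|n^h\|_{L^1}$, $\|c^h\|_{L^\infty}$ (which is bounded by $\|c_0\|_{L^\infty}$ by the maximum principle since $k\ge 0$ and $u^h$ is divergence-free), and then bootstraps: an $L^p$ bound on $n^h$, then $\|c^h\|_{1,q}$, then $\|A^\alpha u^h\|_{L^2}$ via the mild formulation and the generalized Gronwall inequality Lemma \ref{GGI}. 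The drift $\sigma(u^h)h$ contributes, after taking $D(A^\alpha)$-norms in the mild formula for $u^h$, a term of the form $g(t)\int_0^t (t-s)^{-(\alpha-1/2)}(1+\|u^h(s)\|_\alpha)\|h(s)\|_U\,ds$; applying Cauchy–Schwarz in $s$ turns $\|h\|_U$ into the $L^2$-norm bounded by $N^{1/2}$ and leaves a kernel $(t-s)^{-2(\alpha-1/2)}$ with exponent $>-1$ (since $\alpha<1$), so this fits the hypotheses of Lemma \ref{GGI} with $\alpha_\xi = 1-2(\alpha-1/2), \dots$; the three coupled integral inequalities for $\|n^h\|_{L^\infty}$, $\|c^h\|_{1,q}$, $\|A^\alpha u^h\|_{L^2}$ are then closed by the Mittag-Leffler bound in Lemma \ref{GGI}, yielding a constant $C$ depending only on $N,T,\|n_0\|_{L^\infty},\|c_0\|_{1,q},\|u_0\|_\alpha$, uniformly in $h\in S^N$. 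Finally, the standard continuation argument: the local existence time $T_0$ depends only on the size of the current data and on $N$; since \eqref{CNT} a priori bounds that size on all of $[0,T]$, the solution extends to $[0,T]$, and uniqueness on $[0,T]$ follows from local uniqueness by a connectedness/contradiction argument (or a Gronwall estimate on the difference of two solutions, again via Lemma \ref{GGI}).

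The main obstacle is obtaining the a priori estimate \eqref{CNT} with a constant that is \emph{uniform over $h\in S^N$} and depends on $h$ only through $N=\|h\|_{L^2([0,T],U)}^2$; this is where the interplay between the non-$L^2$-type norms ($L^\infty$ for $n$, $W^{1,q}$ for $c$, $D(A^\alpha)$ for $u$), the quadratic nonlinearities, the structural conditions (H.2), and the control term must all be balanced. Concretely, the delicate point is that the natural energy estimate only controls $u^h$ in $D(A^{1/2})$ (via the $\nu\Delta u$ term and (H.5)), while the $c$- and $n$-equations need $\|u^h\|_\alpha$ with $\alpha>1/2$; bootstrapping from $D(A^{1/2})$ to $D(A^\alpha)$ uses the mild formula and the singular kernel $(t-s)^{-\alpha}$ together with the bound $\|(u^h\cdot\nabla)u^h\|_{L^2}\le C\|A^\alpha u^h\|_{L^2}^2$ from \eqref{eq 2024 02 03}, producing a quadratic term in $\|A^\alpha u^h\|_{L^2}$ which a naive Gronwall cannot absorb — this is exactly why the generalized Gronwall inequality Lemma \ref{GGI} (or a bootstrap on successively higher $L^p$/fractional norms as in \cite{Zhai Zhang 2020}) is needed, and why the positivity $n_0>0$, $c_0>0$ hypotheses, preserved by the flow, are used to make sense of $k(c^h)/\chi(c^h)$ and the entropy functional. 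Once this is in hand, everything else is routine.
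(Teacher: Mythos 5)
Your proposal is correct and follows essentially the same route as the paper: a Banach fixed-point argument for local existence (the paper implements this with a cut-off $\theta_m$ on the norms rather than direct Picard iteration on a short interval, but this is the same idea), followed by the Winkler-type entropy/bootstrap chain ($\int\!\!\int|\nabla n|^2/n$ and $\int\!\!\int|\nabla c|^4$ via (H.2), then $L^p$ bounds on $n$, then $\|u\|_{L^2}$, $\|A^{1/2}u\|_{L^2}$, $\|A^{\alpha}u\|_{L^2}$, $\|\nabla c\|_{L^q}$, $\|n\|_{L^\infty}$), with the control term absorbed through Cauchy--Schwarz and $\|h\|_{L^2([0,T],U)}^2\leqslant N$, and a continuation argument to rule out finite-time blow-up. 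The only minor divergence is that the paper closes the quadratic term in the $u$-estimates by first obtaining the $D(A^{1/2})$ bound with the standard 2D Gronwall trick (using $\int_0^T\|\nabla u\|_{L^2}^2\,ds\leqslant C_{N,T}$) rather than by invoking Lemma \ref{GGI}, which it reserves for Sections \ref{a} and \ref{b}; both ways work.
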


The proof of proposition \ref{Thm skeleton} is a generalization of \cite[Theorem 2.1]{Zhai Zhang 2020} and Proposition \ref{regularity}. The main difference lies in the absence of a diffusion term, but the presence of an additional drift term given by $\int_0^te^{-(t-s)\nu A}\sigma(u^h(s))h(s)ds$, requiring appropriate estimates. To simplify the exposition, in this section, we {abbreviate $(n^h,c^h,u^h)$ as $(n,c,u)$} and assume $\delta=\mu=\nu=1$, $\chi(c)=1$, and $k(c)=c$. 

\subsection{Existence of Local Solutions}\label{subsection1}

Introduce the following spaces
$$
\Upsilon^n_t:=L^\infty([0,t],C^0(\bar{\mathcal{O}})),\ \Upsilon^c_t:=L^\infty([0,t],W^{1,q}(\mathcal{O})),
\ \Upsilon^u_t:=L^\infty([0,t],D(A^\alpha))
$$
with the corresponding norms given by
$$
\|n\|_{\Upsilon^n_t}=\sup_{s\in[0,t]}\|n(s)\|_{L^\infty},\ \|c\|_{\Upsilon^c_t}=\sup_{s\in[0,t]}\|c(s)\|_{1,q},\ \|u\|_{\Upsilon^u_t}=\sup_{s\in[0,t]}\|u(s)\|_{\alpha}.
$$

\begin{proposition}\label{thm local}
There exist $\kappa_{max}\in (0,\infty]$ and a unique local solution $(n,c,u)\in
 C([0,\kappa_{max}),C^0(\bar{\mathcal{O}}))
    \times
    C([0,\kappa_{max}),C^0(\bar{\mathcal{O}}))
    \times
C([0,\kappa_{max}),D(A^\alpha))$ of system (\ref{eq skeleton}) on the interval $[0,\kappa_{max})$ such that, if $\kappa_{max}<\infty$, then
	\begin{align}\label{eq Tmax}
		 \|n\|_{\Upsilon^n_t}+\|c\|_{\Upsilon^c_t}+\|u\|_{\Upsilon^u_t}\rightarrow \infty\ \text{as } t\uparrow\kappa_{max}.
	\end{align}
\end{proposition}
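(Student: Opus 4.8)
The plan is to construct the local solution by a standard Banach fixed-point argument in a closed ball of the product space $\Upsilon^n_t\times\Upsilon^c_t\times\Upsilon^u_t$ for a sufficiently small time $t$, following the scheme of \cite[Theorem 2.1]{Zhai Zhang 2020} but with the stochastic convolution replaced by the control drift $\int_0^{\cdot}e^{-(\cdot-s)\nu A}\sigma(u(s))h(s)\,ds$. Concretely, I would define the map $\Psi=(\Psi_1,\Psi_2,\Psi_3)$ on a triple $(n,c,u)$ by the right-hand sides of \eqref{eq skeleton 1}, fix $R>0$ larger than $\|n_0\|_{L^\infty}+\|c_0\|_{1,q}+\|u_0\|_\alpha$ (using (F3)), and show that for $t=t(R,N)$ small enough, $\Psi$ maps the ball $B_R=\{\|n\|_{\Upsilon^n_t}+\|c\|_{\Upsilon^c_t}+\|u\|_{\Upsilon^u_t}\le R\}$ into itself and is a contraction there. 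The $C([0,t],\cdot)$-continuity of the fixed point then follows exactly as in the proof of Proposition \ref{regularity}, i.e. from strong continuity of the semigroups (F1)--(F2), the smoothing estimates on $e^{-tB}$ and $e^{-tA}$, and the continuity in $H$ of the drift integral (which here is even easier than the stochastic convolution, being a genuine Bochner integral).

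The key estimates are the following. For $\Psi_1$ and $\Psi_2$ I would use the $L^q$ smoothing bounds $\|B^\beta e^{-t(B-1)}\nabla\cdot w\|_{L^q}\le C(1+t^{-1/2-\beta})\|w\|_{L^q}$ and $\|B^\gamma e^{-t(B-1)}w\|_{L^q}\le C(1+t^{-\gamma})\|w\|_{L^q}$ with $\beta\in(1/q,1/2)$ and $\gamma\in(1/2,1)$, together with the imbeddings $D(B^\beta),D(A^\alpha)\hookrightarrow C^0(\bar{\mathcal O})$; these give, for $t$ small, bounds of the form $\|\Psi_1(n,c,u)\|_{\Upsilon^n_t}\le \|n_0\|_{L^\infty}+Ct^{1/2-\beta}(R^2+R^2)$ and similarly for $\Psi_2$. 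For $\Psi_3$ I would use $\|A^\alpha e^{-tA}\mathcal P w\|_{L^2}\le Ct^{-\alpha}\|w\|_{L^2}$ together with $\|(u\cdot\nabla)u\|_{L^2}\le C\|A^\alpha u\|_{L^2}^2$ (inequality \eqref{eq 2024 02 03}) and $\|n\nabla\phi\|_{L^2}\le C\|n\|_{L^\infty}$. The genuinely new term is the control drift, for which (H.5), or alternatively the weaker (H.3) combined with $\|A^\alpha e^{-tA}\|_{\mathcal L(H)}\le Ct^{-\alpha}$, yields
\begin{align*}
\Big\|\int_0^t e^{-(t-s)\nu A}\sigma(u(s))h(s)\,ds\Big\|_\alpha
&\le C\int_0^t (t-s)^{-\alpha}\|\sigma(u(s))\|_{\mathcal L^2_0}\|h(s)\|_U\,ds\\
&\le C\Big(\int_0^t (t-s)^{-2\alpha}(1+\|u(s)\|_\alpha^2)\,ds\Big)^{1/2}\Big(\int_0^t\|h(s)\|_U^2\,ds\Big)^{1/2},
\end{align*}
and since $\alpha\in(1/2,1)$ one has $2\alpha>1$, so I would instead work with $A^{\alpha}$ replaced by a split $A^{\alpha}e^{-(t-s)A}=A^{\theta}\cdot A^{\alpha-\theta}e^{-(t-s)A}$ or, more simply, estimate in $D(A^{1/2})$ using (H.5) and then bootstrap — the point being to keep the singularity exponent below $1$ and pick up a positive power of $t$ from $\|h\|_{L^2([0,t],U)}\le N^{1/2}$ and Hölder. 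Contraction estimates for differences $(n_1-n_2,c_1-c_2,u_1-u_2)$ are obtained by the same computations, using local Lipschitz continuity of the nonlinearities on $B_R$ and, for the drift, $\|\sigma(u_1)-\sigma(u_2)\|_{\mathcal L^2_0}\le K^{1/2}\|u_1-u_2\|_H\le C\|u_1-u_2\|_\alpha$.

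Once a unique local solution on some $[0,t_0]$ is in hand, I would define $\kappa_{\max}$ as the supremum of all $t_0$ for which a solution on $[0,t_0)$ exists; uniqueness (proved on overlapping intervals by the same contraction argument, or by a Gronwall estimate using Lemma \ref{GGI}) guarantees these patch consistently into a maximal solution on $[0,\kappa_{\max})$. For the blow-up alternative \eqref{eq Tmax}, I would argue by contradiction: if $\kappa_{\max}<\infty$ but $\liminf_{t\uparrow\kappa_{\max}}(\|n\|_{\Upsilon^n_t}+\|c\|_{\Upsilon^c_t}+\|u\|_{\Upsilon^u_t})=:R_0<\infty$, then along a sequence $t_j\uparrow\kappa_{\max}$ the data $(n(t_j),c(t_j),u(t_j))$ stay in a fixed ball; since the local existence time $t(R,N)$ above depends only on the size $R$ of the data (and on $N$), not on the starting time, one can restart the equation from $t_j$ with $j$ large enough that $t_j+t(R_0+1,N)>\kappa_{\max}$, extending the solution past $\kappa_{\max}$ — a contradiction. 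A minor technical point here is that the norm $\|n\|_{\Upsilon^n_t}$ is monotone in $t$, so $\liminf$ equals $\lim$ and the contradiction is clean; another is that restarting requires the solution value at $t_j$ to lie in the right spaces, which is exactly the content of the $C([0,\kappa_{\max}),\cdot)$ regularity established along the way. The main obstacle, as flagged above, is controlling the control-drift term: the naive estimate puts the time-singularity exponent at $2\alpha>1$, so some care (splitting fractional powers of $A$, or first closing an estimate at regularity level $1/2$ and bootstrapping to level $\alpha$) is needed to absorb it into the fixed-point scheme with a gain of positive power of $t$.
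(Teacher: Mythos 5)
Your proposal is correct in substance but differs from the paper's route in two respects worth recording. First, the fixed-point setup: the paper does not work on a ball $B_R$ with a restart argument for the blow-up alternative, but instead truncates the nonlinearities with cut-off factors $\theta_m(\|n\|_{\Upsilon^n_s})$, $\theta_m(\|c\|_{\Upsilon^c_s})$, $\theta_m(\|u\|_{\Upsilon^u_s})$ (see (\ref{eq truc n c})) and runs the contraction on all of $S_T$ for $T=T(m)$ small; the local solution of (\ref{eq skeleton}) is then the solution of the truncated system up to the exit time from the ball of radius $m$, and (\ref{eq Tmax}) follows by letting $m\to\infty$. Your ball-plus-restart scheme buys a cleaner statement of the blow-up alternative (and your observation that $t\mapsto\|n\|_{\Upsilon^n_t}$ is monotone, so $\liminf=\lim$, is exactly the point that makes the restart clean); the truncation buys a contraction on a linear space and avoids having to re-verify that the restarted data satisfy the hypotheses. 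Second, and more importantly, the ``main obstacle'' you flag for the control drift is not actually present: you only invoke (H.3) (or (H.5) with a bootstrap), whereas assumption (H.4) states that $\sigma$ maps $D(A^\alpha)$ into $\mathcal{L}^2(U,D(A^\alpha))$, so $A^\alpha$ commutes onto $\sigma(u(s))h(s)$ directly and the Stokes semigroup is then used only as a contraction on $L^2$:
\begin{equation*}
\Big\|\int_0^t A^\alpha e^{-(t-s)\nu A}\sigma(u(s))h(s)\,ds\Big\|_{L^2}
\leqslant \int_0^t \|\sigma(u(s))\|_{\mathcal{L}^2_\alpha}\|h(s)\|_U\,ds
\leqslant C\big(1+\sup_{s\leqslant t}\|u(s)\|_\alpha\big)\,t^{1/2}N^{1/2},
\end{equation*}
with no singular kernel at all; this is precisely the estimate in (\ref{eq Phi3 esta1}). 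Your workaround (splitting $A^\alpha=A^{\alpha-1/2}A^{1/2}$ and using (H.5), which yields the integrable exponent $\alpha-\tfrac12<\tfrac12$) is viable but unnecessary. With that simplification the rest of your argument — the $L^q$ smoothing bounds for $\Psi_1,\Psi_2$, the $A^\alpha e^{-tA}$ bound for the Navier--Stokes nonlinearity, the Lipschitz estimates for the differences, and the transfer of $C([0,\cdot))$-regularity from the proof of Proposition \ref{regularity} — matches the paper's.
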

\begin{proof}
We will modify the coefficients in system (\ref{eq skeleton}) in order to apply a cut-off argument.
Fix a function $\theta\in C^2([0,\infty),[0,1])$ such that
\begin{itemize}
\item[(1)] $\theta(r)=1$, $r\in[0,1]$,

\item[(2)] $\theta(r)=0$, $r>2$,

\item[(3)] $\sup_{r\in[0,\infty)}|\theta'(r)|\leqslant C<\infty$.
\end{itemize}
Set $\theta_m(\cdot)=\theta(\frac{\cdot}{m})$. For every $m\geqslant 1$, consider the following system of PDEs:
\begin{align}\label{eq truc n c}
& dn+\theta_m(\|u\|_{\Upsilon^u_t})\theta_m(\|n\|_{\Upsilon^n_t})u\cdot \nabla ndt
  =
  \delta \Delta ndt-\theta_m(\|n\|_{\Upsilon^n_t})\theta_m(\|c\|_{\Upsilon^c_t})\nabla\cdot(\chi(c)n\nabla c)dt,\nonumber\\
& dc+\theta_m(\|u\|_{\Upsilon^u_t})\theta_m(\|c\|_{\Upsilon^c_t})u\cdot \nabla cdt
  =
  \mu \Delta c dt-\theta_m(\|c\|_{\Upsilon^c_t})\theta_m(\|n\|_{\Upsilon^n_t})k(c)ndt,\nonumber\\
& du+\theta_m(\|u\|_{\Upsilon^u_t})(u\cdot \nabla)udt+\nabla Pdt
   =
   \nu \Delta udt-\theta_m(\|n\|_{\Upsilon^n_t})n\nabla\phi dt+\sigma(u)hdt,\nonumber\\
& \nabla\cdot u=0,\ \ \ \ \ t>0,\ x\in\mathcal{O}.
\end{align}


We consider the Banach space:
$$S_T=\{(n,c,u)\in L^\infty([0,T],C^0(\bar{\mathcal{O}})\times W^{1,q}(\mathcal{O})\times D(A^\alpha)):n(0)=n_0,c(0)=c_0,u(0)=u_0\}
$$
with the corresponding norms given by
$$
\|(n,c,u)\|^2_{S_T}:=\|n\|^2_{\Upsilon^n_T}+\|c\|^2_{\Upsilon^c_T}
+
\|u\|^2_{\Upsilon^u_T}.
$$


We introduce a mapping $\Phi=(\Phi_1,\Phi_2,\Phi_3)$ on $S_T$ by defining
\begin{align}
  \Phi_1(n,c,u)(t)
:=&
  e^{t\Delta}n_0
  -
  \int_0^te^{(t-s)\Delta}\Big\{\theta_m(\|n\|_{\Upsilon^n_s})\theta_m(\|c\|_{\Upsilon^c_s})\nabla\cdot(n\nabla c)\nonumber\\
  &\ \ \ \ \ \ \ \ \ \ \ \ \ \ \ \ \ \ \ \ \ \ \ \ \ \ \ +
  \theta_m(\|u\|_{\Upsilon^u_s})\theta_m(\|n\|_{\Upsilon^n_s})\nabla\cdot (un)\Big\}(s)ds,\nonumber
\end{align}
\begin{align}
  \Phi_2(n,c,u)(t)
:=&
  e^{t\Delta}c_0
  -
  \int_0^te^{(t-s)\Delta}\Big\{\theta_m(\|n\|_{\Upsilon^n_s})\theta_m(\|c\|_{\Upsilon^c_s})nc\nonumber\\
      &\ \ \ \ \ \ \ \ \ \ \ \ \ \ \ \ \ \ \ \ \ \ +
      \theta_m(\|u\|_{\Upsilon^u_s})\theta_m(\|c\|_{\Upsilon^c_s})u\cdot\nabla c\Big\}(s)ds,\nonumber
\end{align}
and
\begin{align}
\Phi_3(n,c,u)(t):=&e^{-tA}u_0-\int_0^te^{-(t-s)A}\theta_m(\|u\|_{\Upsilon^u_s})\mathcal{P}\{(u(s)\cdot \nabla)u(s)\}ds\nonumber\\
                   &
                   -\int_0^te^{-(t-s)A}\theta_m(\|n\|_{\Upsilon^n_s})\mathcal{P}\{n(s)\nabla\phi\} ds\nonumber\\
                   &
                   +
                  \int_0^te^{-(t-s)A}\mathcal{P}\{\sigma(u(s))h(s)\}ds.\nonumber
\end{align}

Recall the following result from (3.5) and (3.6) in \cite{Zhai Zhang 2020}, for $\beta\in(\frac{1}{q},\frac{1}{2})$ and $\gamma \in (\frac{1}{2},1)$,
\begin{align}\label{eq Phi1 esta1}
  &\|\Phi_1(n,c,u)\|_{\Upsilon^n_T}\leqslant
  \|n_0\|_{L^\infty} + C m^2T^{\frac{1}{2}-\beta},
\end{align}
\begin{align}\label{eq Phi2 esta1}
  &\|\Phi_2(n,c,u)\|_{\Upsilon^c_T}\leqslant
  \|c_0\|_{1,q} + C m^2 T^{1-\gamma}.
\end{align}

For $\Phi_3$, by using assumption (H.4) and the H\"older inequality,  and applying similar arguments as in proving (3.7), (3.8) and (3.9) in \cite{Zhai Zhang 2020}, we can therefore yield, for any $t\in [0,T]$,
\begin{align}\label{eq Phi3 esta1}
 & \|A^\alpha\Phi_3(n,c,u)(t)\|_{L^2}\nonumber\\
\leqslant&
   \|e^{-tA}A^\alpha u_0\|_{L^2} + \int_0^t\|e^{-(t-s)A}A^\alpha \theta_m(\|u\|_{\Upsilon^u_s})\mathcal{P}\{(u(s)\cdot \nabla)u(s)\}\|_{L^2}ds\nonumber\\
   &+
   \int_0^t\|e^{-(t-s)A}A^\alpha \theta_m(\|n\|_{\Upsilon^n_s})\mathcal{P}\{n(s)\nabla\phi\}\|_{L^2} ds+
   \int_0^t\|e^{-(t-s)A}A^\alpha\mathcal{P}\{\sigma(u(s))h(s)\}\|_{L^2}ds\nonumber\\
\leqslant&
\|u_0\|_{\alpha}+C\int_0^t(t-s)^{-\alpha}\theta_m(\|u\|_{\Upsilon^u_s})\|A^\alpha u(s)\|^2_{L^2}ds\nonumber\\
   &+\int_0^t(t-s)^{-\alpha}\theta_m(\|n\|_{\Upsilon^n_s})\|n(s)\nabla\phi\|_{L^2}ds+\int_0^t\|A^\alpha\sigma(u(s))\|_{\mathcal{L}^2_0}\|h(s)\|_{U}ds\nonumber\\
   \leqslant&\|u_0\|_{\alpha}+ Cm^2t^{1-\alpha}+Cmt^{1-\alpha}+C_Nt(1+m).
\end{align}
{We have used (\ref{eq 2024 02 03}) and the following property for semigroup $e^{-tA}$:
    \begin{align}\label{Aalpha}
        \| e^{-tA}A^\alpha w\|_{L^2}\leqslant t^{-\alpha}\|w\|_{L^2}, \ \ {\rm for}\ w\in L^2.
    \end{align}
    }
Now, (\ref{eq Phi1 esta1}), (\ref{eq Phi2 esta1}) and (\ref{eq Phi3 esta1}) together show that $\Phi$ maps $S_T$ into itself.

Next we will prove that if $T>0$ is small enough, then $\Phi$ can be made  a contraction on $S_T$.

Let $(n_1,c_1,u_1), (n_2,c_2,u_2)\in S_T$. Also similiar as the discussion of proving (3.17) and (3.18) in \cite{Zhai Zhang 2020}, for $\beta\in (\frac{1}{q},\frac{1}{2})$ and $\gamma\in (\frac{1}{2}, 1)$, we have,
\begin{align}\label{eq Phi1 1-2}
	&\|\Phi_1(n_1,c_1,u_1)-\Phi_1(n_2,c_2,u_2)\|_{\Upsilon^n_T}\nonumber\\
	\leqslant&
	C m\Big(\|c_1-c_2\|_{\Upsilon^c_T}+\|u_1-u_2\|_{\Upsilon^u_T}+\|n_1-n_2\|_{\Upsilon^n_T}\Big)T^{\frac{1}{2}-\beta},
\end{align}
and
\begin{align}\label{eq Phi2 1-2}
	&  \|\Phi_2(n_1,c_1,u_1)-\Phi_2(n_2,c_2,u_2)\|_{\Upsilon^c_T}\nonumber\\
	\leqslant&
	C m\Big(\|c_1-c_2\|_{\Upsilon^c_T}+\|u_1-u_2\|_{\Upsilon^u_T}+\|n_1-n_2\|_{\Upsilon^n_T}\Big)T^{1-\gamma}.
\end{align}

In order to estimate $\|\Phi_3(n_1,c_1,u_1)-\Phi_3(n_2,c_2,u_2)\|_{\Upsilon^u_T}$, we use (H.4), {(\ref{Aalpha}) } and similarly discuss as proving (3.22) and (3.20) in \cite{Zhai Zhang 2020}, for any $t\in [0,T]$,
\begin{align}\label{eq u1-u2}
&    \|\Phi_3(n_1,c_1,u_1)(t)-\Phi_3(n_2,c_2,u_2)(t)\|_\alpha\nonumber\\
\leqslant&
    C \int_0^t(t-s)^{-\alpha}\|\theta_m(\|u_1\|_{\Upsilon^u_s})(u_1\cdot \nabla)u_1-\theta_m(\|u_2\|_{\Upsilon^u_s})(u_2\cdot \nabla)u_2\|_{L^2}ds\nonumber\\
    &+
    C \int_0^t(t-s)^{-\alpha}\|\theta_m(\|n_1\|_{\Upsilon^n_s})n_1\nabla\phi-\theta_m(\|n_2\|_{\Upsilon^n_s})n_2\nabla\phi\|_{L^2}ds\nonumber\\
    &+
    C \int_0^t\|(A^\alpha\sigma(u_1)-A^\alpha\sigma(u_2))h(s)\|_{L^2}ds\nonumber\\
\leqslant &
   C m \|u_1-u_2\|_{\Upsilon^u_t} t^{1-\alpha}+C \|n_1-n_2\|_{\Upsilon^n_T} \cdot t^{1-\alpha}+C \int_0^t\|\sigma(u_1)-\sigma(u_2)\|_{\mathcal{L}^2_\alpha}\|h(s)\|_Udt\nonumber\\
	\leqslant&
	C_N (t+mt^{1-\alpha}) \|u_1-u_2\|_{\Upsilon^u_t} +C \|n_1-n_2\|_{\Upsilon^n_T} \cdot t^{1-\alpha}.
\end{align}

 By virtue of (\ref{eq Phi1 1-2}) (\ref{eq Phi2 1-2}) and (\ref{eq u1-u2}), one can find constants $\rho,\ C_{N,m,T}>0$ such that
\begin{align}\label{eq fix point}
    \|\Phi(n_1,c_1,u_1)-\Phi(n_2,c_2,u_2)\|^2_{S_T}
\leqslant
    C_{N,m,T}T^\rho \|(n_1,c_1,u_1)-(n_2,c_2,u_2)\|^2_{S_T},
\end{align}
and
$$
\lim_{T\rightarrow0}C_{N,m,T}T^\rho=0.
$$

Finally using classical arguments and similar arguments as proving Proposition \ref{regularity}, we can obtain Proposition \ref{thm local}.



\hfill $\Box$
\end{proof}

\subsection{ Global existence}\label{subsection2}
According to Proposition \ref{thm local}, we possess a unique
local-in-time solution for (\ref{eq skeleton}) that has been extended up to a maximal time $\kappa_{max}\leqslant \infty$. In order to prove its global existence, we just need to establish the following estimate
\begin{eqnarray}\label{k_max}
\|n\|_{\Upsilon^n_{T\wedge \kappa_{max}}}\vee\|c\|_{\Upsilon^c_{T\wedge \kappa_{max}}}\vee\|u\|_{\Upsilon^u_{T\wedge \kappa_{max}}}\leqslant C_{N,T,\|n_0\|_{L^\infty},\|c_0\|_{1,q},\|u_0\|_\alpha}, \ \ \ {\rm for\ any}\ T\in (0,\infty),
\end{eqnarray}
which allows for an application of (\ref{eq Tmax}) to rule out the case $\kappa_{max}<\infty$. It should be noted that (\ref{CNT}) holds once  (\ref{k_max}) has been proven. In this subsection, the constant $C,C_T,\cdots$ may depend on $\|n_0\|_{L^\infty},\|c_0\|_{1,q},\|u_0\|_\alpha$, and we will omit it for simplicity.
To get (\ref{k_max}), we shall first recall the following results from Corollary 4.2 and Lemma 4.5 in \cite{Winkler}.
\begin{lemma}\label{lem 3.1}
Let $p>1$, $r\in [1, \frac{p}{p-1}]$ and $0<T<\kappa_{max}$. Then there exists a constant $C_T$ and $C_p$ such that
\begin{equation}\label{3.1.0}
\int_0^{T}\|n(t)\|_{L^p}^rdt
\leqslant
 C_T\left (\int_0^{T}\int_{\mathcal{O}} \frac{|\nabla n(t,x)|^2}{n(t,x)}dxdt +1\right )^{\frac{(p-1)r}{p}},
\end{equation}
and
\begin{equation}\label{3.1.1}
\int_{\mathcal{O}}n^p(t,x)dx
\leqslant (\int_{\mathcal{O}}n^p(0,x)dx+1)e^{C_p\int_0^t\int_{\mathcal{O}}|\nabla c(s,x)|^4dxds},\ t\in[0,{T}].
\end{equation}
\end{lemma}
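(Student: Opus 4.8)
\textbf{Proof proposal for Lemma \ref{lem 3.1}.}

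The plan is to import the two estimates essentially verbatim from \cite{Winkler} (Corollary 4.2 and Lemma 4.5), adapting them to the present stochastic-skeleton setting, where the only structural change relative to Winkler's deterministic chemotaxis-Navier-Stokes system is the extra drift term $\int_0^t e^{-(t-s)A}\sigma(u)h\,ds$ in the $u$-equation; crucially this modification touches neither the $n$-equation nor the $c$-equation, so the $n$-mass balance underlying both \eqref{3.1.0} and \eqref{3.1.1} is unchanged. For \eqref{3.1.1}: test the $n$-equation (using $\nabla\cdot u=0$ so that the convective term $u\cdot\nabla n$ drops out after integration by parts) against $p\,n^{p-1}$ to get
\begin{align*}
\frac{d}{dt}\int_{\mathcal{O}}n^p\,dx
+ p(p-1)\int_{\mathcal{O}}n^{p-2}|\nabla n|^2\,dx
= p(p-1)\int_{\mathcal{O}} \chi(c)\,n^{p-1}\nabla n\cdot\nabla c\,dx,
\end{align*}
then apply Young's inequality on the right to absorb the gradient term and bound the remainder by $C_p\int_{\mathcal{O}}n^p|\nabla c|^2\,dx$; a further Young step splits $|\nabla c|^2$ into $|\nabla c|^4$ plus a constant, and Gronwall's inequality in time yields \eqref{3.1.1}.

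For \eqref{3.1.0}: the quantity $\int_{\mathcal{O}}\frac{|\nabla n|^2}{n}\,dx = 4\int_{\mathcal{O}}|\nabla\sqrt{n}|^2\,dx$ controls, via the 2D Gagliardo-Nirenberg inequality applied to $\sqrt{n}$, the norm $\|\sqrt{n}\|_{L^{2p}}^2=\|n\|_{L^p}$ up to lower-order terms involving $\|\sqrt{n}\|_{L^2}^2=\|n\|_{L^1}$; the latter is conserved (again because $u\cdot\nabla n$ and the chemotaxis divergence term integrate to zero), so it is a fixed constant determined by $n_0$. Raising to the power $r$, integrating in time over $[0,T]$, using H\"older in $t$ with the constraint $r\le p/(p-1)$ to match the exponent $\frac{(p-1)r}{p}\le 1$, and collecting constants gives \eqref{3.1.0}. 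One should only remark that $T<\kappa_{max}$ guarantees $(n,c,u)$ lives in the regularity class of Proposition \ref{thm local}, so all integrations by parts and the test-function computations are justified.

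The main (and in fact only) obstacle is bookkeeping rather than mathematics: one must verify that Winkler's arguments, written for classical solutions of the deterministic system, transfer to the mild/variational solutions here, i.e.\ that the formal testing procedures are legitimate in the function spaces $C([0,T],C^0(\bar{\mathcal{O}}))\times C([0,T],C^0(\bar{\mathcal{O}}))\times C([0,T],D(A^\alpha))$ on $[0,\kappa_{max})$. Since $D(A^\alpha)\hookrightarrow C^0(\bar{\mathcal{O}})$ with $\alpha>1/2$ gives $u$ bounded (in fact H\"older) in space, and $n,c$ are continuous on $\bar{\mathcal{O}}$ with $\nabla c\in L^q$, the regularizing effect of the heat semigroups promotes $n$ and $c$ to the parabolic regularity needed for the above manipulations on any $[\tau,T]$ with $\tau>0$; this is exactly the kind of argument already used in the proof of Proposition \ref{regularity}. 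Because the statement only records known results to be used later, I would keep the proof short, citing \cite[Corollary 4.2, Lemma 4.5]{Winkler} for the estimates and noting that the $h$-drift does not affect them.
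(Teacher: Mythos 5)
Your proposal takes the same route as the paper, which offers no proof at all and simply recalls both estimates from Corollary 4.2 and Lemma 4.5 of \cite{Winkler}; the justification is exactly the one you give, namely that the control drift $\sigma(u)h$ enters only the $u$-equation, so the $n$- and $c$-equations, and hence these estimates, are unchanged. The one wrinkle in your sketch of \eqref{3.1.1} is that the ``further Young step'' splitting $|\nabla c|^2$ into $|\nabla c|^4+1$ leaves you with $\int_{\mathcal{O}} n^p|\nabla c|^4\,dx$, which is not of Gronwall form with coefficient $\int_{\mathcal{O}}|\nabla c|^4\,dx$; Winkler's actual argument keeps the dissipation term $\int_{\mathcal{O}} n^{p-2}|\nabla n|^2\,dx$ and uses the 2D Gagliardo--Nirenberg inequality on $n^{p/2}$ to bound $\int_{\mathcal{O}} n^p|\nabla c|^2\,dx$ by $\eta\|\nabla n^{p/2}\|_{L^2}^2+C_p\bigl(1+\|\nabla c\|_{L^4}^4\bigr)\int_{\mathcal{O}} n^p\,dx$ --- a harmless point here since you defer to the citation in the end.
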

We start with an estimate of the $L^2$ norm of $u$ and $\nabla u$.
\begin{lemma}\label{lem 3.2}
Let $\theta\in (0,1)$, $h\in S^N$ and $0<T<\kappa_{max}$. Then there exists a constant $C_{N,T}$ such that
\begin{align}\label{3.1}
&\sup_{0\leqslant t\leqslant {T}}\|u(t)\|_{L^2}^2+\int_0^{T}\|\nabla u(t)\|_{L^2}^2dt
\leqslant
C_{N,T}\Big (\int_0^{T}\int_{\mathcal{O}} \frac{|\nabla n(t,x)|^2}{n(t,x)}dxdt +1\Big )^{\frac{\theta}{2}}.
\end{align}
Moreover,
\begin{align}\label{3.2}
\int_0^{T}\int_{\mathcal{O}}|u(t,x)|^4dxdt
\leqslant&
C_{N,T}\Big (\int_0^{T}\int_{\mathcal{O}} \frac{|\nabla n(t,x)|^2}{n(t,x)}dxdt +1\Big )^{\theta}.
\end{align}
\end{lemma}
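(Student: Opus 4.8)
\textbf{Proof strategy for Lemma \ref{lem 3.2}.}

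The plan is to test the $u$-equation from the skeleton system (\ref{eq skeleton}) against $u$ itself in $L^2(\mathcal{O})$, which is the standard energy identity for the (forced) Navier-Stokes equation. Since $\nabla\cdot u = 0$ and $u$ vanishes on $\partial\mathcal{O}$, the convective term $\int_{\mathcal{O}}(u\cdot\nabla)u\cdot u\,dx$ vanishes and the pressure term drops out after the Helmholtz projection; the viscous term contributes $\nu\|\nabla u\|_{L^2}^2$. This yields
\begin{align*}
\frac{1}{2}\frac{d}{dt}\|u(t)\|_{L^2}^2 + \nu\|\nabla u(t)\|_{L^2}^2
= -\int_{\mathcal{O}} n\,\nabla\phi\cdot u\,dx + \langle \sigma(u)h, u\rangle_{L^2}.
\end{align*}
For the buoyancy term I would bound $\big|\int_{\mathcal{O}} n\,\nabla\phi\cdot u\,dx\big| \leqslant \|\nabla\phi\|_{L^\infty}\|n\|_{L^{p'}}\|u\|_{L^p}$ for a suitable conjugate pair (using $\phi\in C^2(\bar{\mathcal{O}})$), then interpolate $\|u\|_{L^p}$ between $\|u\|_{L^2}$ and $\|\nabla u\|_{L^2}$ via Gagliardo-Nirenberg/Ladyzhenskaya in 2D, and absorb the $\|\nabla u\|_{L^2}^2$ part into the left-hand side by Young's inequality. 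For the forcing term, Cauchy-Schwarz together with (H.3) gives $|\langle\sigma(u)h,u\rangle_{L^2}| \leqslant \|\sigma(u)\|_{\mathcal{L}^2_0}\|h\|_U\|u\|_{L^2} \leqslant K^{1/2}(1+\|u\|_{L^2}^2)^{1/2}\|h\|_U\|u\|_{L^2} \leqslant C(1+\|u\|_{L^2}^2)\|h\|_U$, and since $h\in S^N$ the factor $\|h\|_U$ is integrable in time with $\int_0^T\|h\|_U\,ds \leqslant (TN)^{1/2}$.

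Next I would integrate in time over $[0,t]$, $t\leqslant T$, and apply Gronwall's inequality. The key point forcing the appearance of the factor $\big(\int_0^T\!\int_{\mathcal{O}}\frac{|\nabla n|^2}{n}\,dx\,dt + 1\big)^{\theta/2}$ is that, when estimating the buoyancy term, one needs a norm of $n$ in time, and the only \emph{a priori} control available at this stage is the entropy-type dissipation $\int_0^T\!\int_{\mathcal{O}}\frac{|\nabla n|^2}{n}$; Lemma \ref{lem 3.1}, inequality (\ref{3.1.0}), converts this into a bound on $\int_0^T\|n(t)\|_{L^p}^r\,dt$ for appropriate $p,r$, with exponent $\frac{(p-1)r}{p}$ on the right. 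Choosing $p,r$ and $\theta$ so that $\frac{(p-1)r}{p}$ matches (or is dominated by) $\frac{\theta}{2}$ — exploiting the 2D smallness of the buoyancy contribution and conservation of mass $\int_{\mathcal{O}}n(t) = \int_{\mathcal{O}}n_0$ — produces (\ref{3.1}). Then (\ref{3.2}) follows from (\ref{3.1}) by the 2D Ladyzhenskaya inequality $\|u\|_{L^4}^4 \leqslant C\|u\|_{L^2}^2\|\nabla u\|_{L^2}^2$, integrating in time and using both the $\sup$-bound and the $\int_0^T\|\nabla u\|_{L^2}^2$-bound from (\ref{3.1}), which multiply to give the exponent $\theta$.

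I expect the main obstacle to be the careful bookkeeping of exponents: one must choose the interpolation parameters in the buoyancy estimate, the pair $(p,r)$ fed into Lemma \ref{lem 3.1}, and the free parameter $\theta\in(0,1)$ in a mutually consistent way so that all the powers of the dissipation integral line up and the $\|\nabla u\|_{L^2}^2$ terms are genuinely absorbable. One also has to be slightly careful that the Gronwall step does not destroy the structure: the coefficient multiplying $\|u\|_{L^2}^2$ inside the exponential comes only from the forcing term and is bounded by $C\int_0^T\|h\|_U\,ds \leqslant C(TN)^{1/2}$, so the Gronwall constant is of the form $C_{N,T}$ as claimed, uniformly over $h\in S^N$. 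The convective-term cancellation and the regularity needed to justify testing against $u$ are routine given the local well-posedness already established in Proposition \ref{thm local}, so I would only remark on them rather than belabor the details.
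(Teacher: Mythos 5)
Your proposal is correct and follows essentially the same route as the paper: the $L^2$ energy identity for $u$ (convective and pressure terms vanishing), the buoyancy term controlled through Lemma \ref{lem 3.1} and the 2D interpolation/absorption argument of Winkler's Lemma 4.3 to produce the factor $\bigl(\int_0^T\!\int_{\mathcal{O}}\frac{|\nabla n|^2}{n}+1\bigr)^{\theta/2}$, the forcing term via (H.3) and H\"older with $h\in S^N$ feeding a Gronwall step, and finally the Ladyzhenskaya/Gagliardo--Nirenberg inequality to deduce (\ref{3.2}) from (\ref{3.1}). The only cosmetic difference is that you spell out the interpolation details that the paper delegates to \cite{Winkler}.
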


{\bf Proof.} Considering equation (2.12) and employing integration by parts, we can deduce that
 \begin{align}\label{3.3}
& \|u(t)\|^2_{L^2}+2\int_0^t \|\nabla u(s)\|^2_{L^2}ds-\|u_0\|^2_{L^2}\nonumber\\
=&-2\int_0^t\langle u(s),\mathcal{P}\{ n(s)\nabla\phi\}\rangle_{L^2} ds
+2\int_0^t\langle u(s), \sigma(u(s))h(s)\rangle_{L^2}ds .
\end{align}
Based on Lemma \ref{lem 3.1} and employing similar arguments as those used to prove Lemma 4.3 in \cite{Winkler}, for all $t\in [0,T]$, we have:
\begin{align}
& \|u(t)\|^2_{L^2}+\int_0^{t} \|\nabla u(s)\|^2_{L^2}ds\nonumber\\
\leqslant&C_{T}\left (\int_0^{t}\int_{\mathcal{O}} \frac{|\nabla n(s,x)|^2}{n(s,x)}dxds+1\right )^{\frac{\theta}{2}}
+C\int_0^t\langle u(s), \sigma(u(s))h(s)\rangle_{L^2}ds.\nonumber\\
\leqslant& C_{T}\left (\int_0^{t}\int_{\mathcal{O}} \frac{|\nabla n(s,x)|^2}{n(s,x)}dxds +1\right )^{\frac{\theta}{2}}
+C\int_0^t(1+\|h(s)\|_{L^2}^2)(1+\| u(s)\|_{L^2}^2)ds.
\end{align}
We apply the H{\"o}lder inequality and Assumption (H.3)  to the second inequality. To complete the proof (\ref{3.1}), we take the supremum on both sides of the inequality and apply the Gronwall inequality and the fact that $h\in S^N$.

The assertion (\ref{3.2}) now follows from (\ref{3.1}) and the Gagliardo-Nirenberg inequality, see e.g., (3.39) in \cite{Zhai Zhang 2020}.\hfill $\Box$

\begin{corollary}\label{cor 3.1}
Let $\theta\in (0,1)$, $h\in S^N$ and $0<T<\kappa_{max}$. The following statements hold:
\begin{equation}\label{3.6}
\int_0^{T}\int_{\mathcal{O}} \frac{|\nabla n(t,x)|^2}{n(t,x)}dxdt +1\leqslant C_{N,T},
\end{equation}
\begin{align}\label{3.7}
\int_0^{T}\int_{\mathcal{O}}|\nabla c(t,x)|^4dxdt
\leqslant& C\Big(\int_0^{T}\int_{\mathcal{O}} \frac{|\nabla n(t,x)|^2}{n(t,x)}dxdt +1\Big )^{\theta}.
\end{align}
\end{corollary}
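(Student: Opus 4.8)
\textbf{Proof proposal for Corollary \ref{cor 3.1}.}

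The plan is to run a self-consistent coupling argument on the quantity
$Y(T):=\int_0^{T}\int_{\mathcal{O}} \frac{|\nabla n(t,x)|^2}{n(t,x)}\,dx\,dt +1$.
The starting point is the entropy-type identity for the $n$-equation, obtained by testing the first equation of (\ref{eq skeleton}) with $\log n$ (using $n_0>0$ and the maximum principle to guarantee $n>0$ up to $\kappa_{max}$): this produces, after integration by parts and the cancellation $\int_{\mathcal{O}} u\cdot\nabla n\,(\log n) = -\int_{\mathcal{O}} (\nabla\cdot u) (n\log n - n)=0$ coming from $\nabla\cdot u=0$, a differential inequality of the form
\[
\int_{\mathcal{O}} n\log n\,(T)\,dx + \int_0^T\!\!\int_{\mathcal{O}} \frac{|\nabla n|^2}{n}\,dx\,dt
\;\leqslant\; C + C\int_0^T\!\!\int_{\mathcal{O}} \chi(c)\, \nabla n\cdot\nabla c\,dx\,dt,
\]
and the chemotaxis cross term is absorbed, via Young's inequality, by half of the dissipation plus a multiple of $\int_0^T\!\!\int_{\mathcal{O}} |\nabla c|^2 n\,dx\,dt$; this is exactly the scheme of Corollary 4.2 in \cite{Winkler}. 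Thus $Y(T)\leqslant C + C\int_0^T\!\!\int_{\mathcal{O}} |\nabla c|^2 n\,dx\,dt$, and by Hölder (with exponents $2,4,4$) the right side is controlled by $C + C\big(\int_0^T\!\!\int_{\mathcal{O}} |\nabla c|^4\big)^{1/2}\big(\int_0^T\!\!\int_{\mathcal{O}} n^2\big)^{1/2}$.

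Next I would close the loop. For $\int_0^T\!\!\int_{\mathcal{O}} n^2$, estimate (\ref{3.1.1}) of Lemma \ref{lem 3.1} gives $\int_{\mathcal{O}} n^2(t)\,dx \leqslant C\exp\big(C_2\int_0^t\!\!\int_{\mathcal{O}} |\nabla c|^4\big)$. For $\int_0^T\!\!\int_{\mathcal{O}} |\nabla c|^4$, the control of $\nabla c$ in the $c$-equation is the standard one (testing against $-\Delta c$ and using $\|u\|_{L^4}$, Corollary 4.2-type computations in \cite{Winkler}), which yields precisely the bound to be proven as (\ref{3.7}), namely $\int_0^T\!\!\int_{\mathcal{O}} |\nabla c|^4 \leqslant C\,Y(T)^{\theta}$ for any $\theta\in(0,1)$ — the extra drift term $\sigma(u)h$ does not enter the $c$-equation and so changes nothing here, while the $u^4$-integral appearing in that estimate has already been tamed by (\ref{3.2}) of Lemma \ref{lem 3.2}, which itself reads $\int_0^T\!\!\int_{\mathcal{O}} |u|^4 \leqslant C_{N,T}\,Y(T)^{\theta}$. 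Plugging these back, one gets a closed inequality
\[
Y(T)\;\leqslant\; C + C_{N,T}\, Y(T)^{\theta/2}\cdot \exp\!\big(C\,Y(T)^{\theta}\big)^{1/2},
\]
which at first sight looks useless because of the exponential. The resolution — again as in \cite{Winkler} — is to first prove (\ref{3.6}) on a possibly short interval and then bootstrap, or rather to use the gain of integrability more carefully: one uses (\ref{3.1.0}) of Lemma \ref{lem 3.1} to convert $Y(T)$-powers into $L^p_t L^p_x$ bounds on $n$ and to show that in fact the exponent in (\ref{3.1.1}) can be made to involve $\int |\nabla c|^4$ with a coefficient that, after the chain of substitutions, produces only a sublinear power of $Y(T)$, so the self-improving inequality $Y(T)\leqslant C + C_{N,T}\,Y(T)^{\eta}$ with $\eta<1$ forces $Y(T)\leqslant C_{N,T}$. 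Once (\ref{3.6}) holds, (\ref{3.7}) is immediate from the $\nabla c$ estimate above with the right-hand side now a finite constant, and it is stated in the form $C\,Y(T)^{\theta}$ only to be reused.

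The main obstacle is the circularity: $Y(T)$, $\int |\nabla c|^4$, $\int |u|^4$ and $\int n^2$ are all mutually coupled, and the dependence of $\int n^2$ on $\int|\nabla c|^4$ is exponential through (\ref{3.1.1}). Untangling this requires the quantitative gain $\theta<1$ in Lemma \ref{lem 3.2} and in (\ref{3.7}) to be genuinely exploited, i.e. choosing $\theta$ small enough that after composing the estimates the feedback exponent on $Y(T)$ stays strictly below $1$; this is precisely the delicate bookkeeping of exponents carried out in Winkler's argument, and our only additional task is to check that the control term $\int_0^t\langle u,\sigma(u)h\rangle_{L^2}\,ds$ — already absorbed by Gronwall in Lemma \ref{lem 3.2} using $h\in S^N$ and (H.3) — does not spoil the $\theta$-gain, which it does not since it only affects the $u$-estimate and contributes an $N,T$-dependent constant. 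I would therefore present Corollary \ref{cor 3.1} as a direct adaptation of \cite[Corollary 4.2]{Winkler}, indicating the two places (the $u$-energy inequality and the $L^2$-bound on $n$) where the stochastic/control modification is localized and harmless.
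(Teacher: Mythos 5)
Your proposal diverges from the paper's argument at the decisive step, and the route you take cannot be closed. You absorb the chemotaxis cross-term into $C\int_0^T\!\!\int_{\mathcal{O}} n|\nabla c|^2$, split it by H\"older into $\big(\int\!\!\int|\nabla c|^4\big)^{1/2}\big(\int\!\!\int n^2\big)^{1/2}$, and then control $\int_{\mathcal{O}} n^2$ by (\ref{3.1.1}), which is \emph{exponential} in $\int\!\!\int|\nabla c|^4$. This produces the inequality $Y(T)\leqslant C + C\,Y(T)^{\theta/2}\exp(C\,Y(T)^{\theta})^{1/2}$ that you yourself flag as ``useless,'' and your proposed repair --- that the exponent in (\ref{3.1.1}) ``can be made to involve $\int|\nabla c|^4$ with a coefficient that \ldots produces only a sublinear power of $Y(T)$'' --- is not substantiated and is not how the argument goes: no bookkeeping of exponents turns the genuinely exponential dependence in (\ref{3.1.1}) into a sublinear one, and there is no short-time-plus-bootstrap step in the paper. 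Your independent derivation of (\ref{3.7}) by testing the $c$-equation against $-\Delta c$ runs into the same wall, since it again requires $\int\!\!\int n^2$.

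The paper's proof avoids $\int\!\!\int n^2$ entirely. Following Winkler's Corollary 4.4 (not 4.2), one differentiates the \emph{coupled} Lyapunov functional built from $\int_{\mathcal{O}} n\log n$ and $\int_{\mathcal{O}}|\nabla\rho(c)|^2$ with $\rho(c)=\int_0^c\frac{d\sigma}{g(\sigma)}$, $g=k/\chi$; the structural hypotheses ({\bf H.2}) make the chemotaxis cross-terms cancel identically, so that
\begin{equation*}
\int_0^{T}\!\!\int_{\mathcal{O}} \frac{|\nabla n|^2}{n}\,dx\,dt +\tfrac{1}{4}\int_0^{T}\!\!\int_{\mathcal{O}}g(c)|D^2\rho (c)|^2\,dx\,dt\leqslant C_1+C_2\int_0^{T}\!\!\int_{\mathcal{O}}|u|^4\,dx\,dt ,
\end{equation*}
together with $\int_0^{T}\!\!\int_{\mathcal{O}}|\nabla c|^4\leqslant C_3\int_0^{T}\!\!\int_{\mathcal{O}}g(c)|D^2\rho(c)|^2$. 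The only source term is the fluid one, and (\ref{3.2}) of Lemma \ref{lem 3.2} bounds it by $C_{N,T}\,Y(T)^{\theta}$ with $\theta<1$, so $Y(T)\leqslant C+C_{N,T}Y(T)^{\theta}$ closes immediately to give (\ref{3.6}), and (\ref{3.7}) drops out of the same two displays. Your localization of the stochastic-control modification (it enters only through Lemma \ref{lem 3.2} and is harmless) is correct, but the core of the corollary is the cancellation in the coupled functional, which your scheme replaces by an estimate that cannot be closed.
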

\begin{proof} From the proof of Corollary 4.4 in \cite{Winkler}, we know that
\begin{equation*}
\int_0^{T}\int_{\mathcal{O}} \frac{|\nabla n(t,x)|^2}{n(t,x)}dxdt +\frac{1}{4}\int_0^{T}\int_{\mathcal{O}}g(c(t,x))|D^2\rho (c(t,x))|^2dxdt\leqslant C_1+C_2  \int_0^{T}\int_{\mathcal{O}}|u(t,x)|^4dxdt,
\end{equation*}
and
$$\int_0^{T}\int_{\mathcal{O}}|\nabla c(t,x)|^4dxdt\leqslant C_3 \int_0^{T}\int_{\mathcal{O}}g(c(t,x))|D^2\rho (c(t,x))|^2dxdt,$$
where $g(c)=\frac{k(c)}{\chi(c)}$, $\rho (c)=\int_0^c \frac{d\sigma}{g(\sigma)}$.

Both (\ref{3.6}) and (\ref{3.7}) now follows from Lemma \ref{lem 3.2}. \hfill $\Box$

\end{proof}


We are now in the position to prove (\ref{k_max}).

\begin{proof}
	Considering (\ref{3.1.1}), (\ref{3.6}) and (\ref{3.7}), we have for any $p>1$:
	 \begin{equation}\label{3.8}
	 	\int_{\mathcal{O}}n^p(t,x)dx\leqslant C_{p,N,T}<\infty,\ \ \text{for all}\ t\in [0,T\wedge \kappa_{max}).
	 \end{equation}
     Hence, by (\ref{3.3}), the H{\"o}lder inequality and the Assumption (H.3), we can deduce that
     \begin{align}
     	& \|u(t)\|^2_{L^2}+2\int_0^t \|\nabla u(s)\|^2_{L^2}ds-\|u_0\|^2_{L^2}\nonumber\\
     	=&
     	-2\int_0^t\langle u(s), n(s)\nabla\phi\rangle_{L^2} ds
     	+2\int_0^t\langle u(s), \sigma(u(s))h(s)\rangle_{L^2}ds \nonumber\\
     	\leqslant&
     	C\left( \int_0^t\|n(s)\|_{L^2}^2\|\nabla \phi\|_{L^\infty}^2 ds +\int_0^t(1+\|h(s)\|_U^2)(1+\|u(s)\|_{L^2}^2)ds\right),\ \ \text{for all}\ t\in (0,T\wedge \kappa_{max}),\nonumber
     \end{align}
     which, by Gronwall's inequality, implies
     \begin{align}\label{3.9}
     	&\sup_{0\leqslant t\leqslant T\wedge \kappa_{max}}\|u(t)\|^2_{L^2}+2\int_0^{T\wedge \kappa_{max}} \|\nabla u(s)\|^2_{L^2}ds\nonumber\\
      &\leqslant C\left( \int_0^{T\wedge \kappa_{max}}\|n(s)\|_{L^2}^2\|\nabla \phi\|_{L^\infty}^2 ds +\|u_0\|^2_{L^2}+\int_0^{T}1+\|h(s)\|_U^2ds\right)e^{\int_0^{T}1+\|h(s)\|_U^2ds} \nonumber\\
      &\leqslant C_{N,T}.
     \end{align}
      Next, we apply operator $\mathcal{P}$ to both sides of (\ref{eq skeleton}) and multiply the resulting identity by $Au$. By using similar arguments as in the proofs of (4.16) and (4.17) in \cite{Winkler}, we can show that, for all $t\in (0,T\wedge \kappa_{max})$
      \begin{align}
     	&\| A^{\frac{1}{2}}u(t)\|_{L^2}^2+\int_0^{t}\|A u(s)\|_{L^2}^2ds\nonumber\\
     	\leqslant& C+C\int_0^{t}\|A^{\frac{1}{2}} u(s)\|_{L^2}^4ds+\int_0^{t}\langle A^{\frac{1}{2}}u(s), A^{\frac{1}{2}}\sigma(u(s))h(s)\rangle_{L^2}ds\nonumber\\
     	\leqslant&C+C\int_0^{t}\|A^{\frac{1}{2}} u(s)\|_{L^2}^4ds+\int_0^{t}(1+\|h(s)\|_U^2)\|A^{\frac{1}{2}} u(s)\|_{L^2}^2ds\nonumber.
     \end{align}
      By using Gronwall's inequality and the fact that $\|A^{\frac{1}{2}}u(s)\|_{L^2}$ is equivalent to $\|\nabla u(s)\|_{L^2}$, it follows form (\ref{3.9}) that
     \begin{align}\label{3.10}
 	    &\sup_{0\leqslant t\leqslant T\wedge \kappa_{max}}\|\nabla u(t)\|_{L^2}^2+\int_0^{T\wedge\kappa_{max}}\|\Delta u(s)\|_{L^2}^2ds\nonumber\\
      &\leqslant Ce^{C\int_0^{T\wedge \kappa_{max}}\|\nabla u(s)\|_{L^2}^2ds+\int_0^{T\wedge \kappa_{max}}1+\|h(s)\|_U^2ds}\leqslant C_{N,T}.
     \end{align}

     Using the variation of constants formula and  applying the same estimate as (3.52),(3.53), and (3.54) in \cite{Zhai Zhang 2020},  for every$\ t\in [0,T\wedge \kappa_{max})$, we obtain
     \begin{align}\label{3.11}
     	\|A^{\alpha}u(t)\|_{L^2}\leqslant& \|A^{\alpha}e^{-tA}u_0\|_{L^2}+\|\int_0^{t}A^{\alpha}e^{-(t-s)A}\mathcal{P}\{n(s)\nabla \phi \}ds\|_{L^2}\nonumber\\
      &+\|\int_0^{t}A^{\alpha}e^{-(t-s)A}\mathcal{P}\{(u(s)\cdot \nabla)u(s)\}ds\|_{L^2}\nonumber
     	+ \|\int_0^{t}A^{\alpha}e^{-(t-s)A}\mathcal{P}\{\sigma(u(s))h(s)\}ds\|_{L^2}\nonumber\\
     	\leqslant&\|A^{\alpha}u_0\|_{L^2}+Ct^{1-\alpha}+C_{N,T}+C\int_0^t(1+\|u(s)\|_\alpha)\left( 1+\|h(s)\|_U^2\right) ds,
     \end{align}
     where the assumption (H.4) and H\"older's inequality are utilized. By taking the supremum on both sides of the inequality and applying the Gronwall inequality and the fact that $h\in S^N$, we derive
     \begin{equation}\label{u max}
          \sup_{0\leqslant t\leqslant T\wedge \kappa_{max}}\|A^{\alpha}u(t)\|_{L^2}^2\leqslant \left(\|A^{\alpha}u_0\|_{L^2}+C_{N,T}\right)e^{C\int_0^{T\wedge \kappa_{max}}1+\|h(s)\|_U^2ds}\leqslant C_{N,T}.
     \end{equation}
    Using the above inequality, we can employ essentially the same technique to estimate  $\|\nabla c(t)\|_{L^q}$ and $\|n(t)\|_{L^\infty}$ as in equations (3.61)-(3.66) in \cite{Zhai Zhang 2020}.
     \begin{equation}\label{c max}
          \sup_{0\leqslant t\leqslant T\wedge \kappa_{max}}\|\nabla c(t)\|_{L^q} \leqslant C_{N,T},
     \end{equation}
     \begin{equation}\label{n max}
          \sup_{0\leqslant t\leqslant T\wedge \kappa_{max}}\|n(t)\|_{L^\infty}\leqslant C_{N,T}.
     \end{equation}

      From (\ref{u max}) in conjunction with (\ref{c max}) and  (\ref{n max}) we  get  (\ref{k_max}).
\hfill $\Box$
\end{proof}

\section{Verification of (a) in Condition \ref{cod}}\label{a}
\setcounter{equation}{0}
This section is devoted to verifying part (a) in Condition \ref{cod} in the proof of Theorem \ref{Thm main 1}.

For $N< \infty$, let $h^i, i\in\mathbb{N}, h\in S^N$ such that $h^i\rightarrow h$ as $i\rightarrow \infty$. Denote $(n^i,c^i,u^i):=\mathcal{G}^0\left(h^i\right)$ and $(n,c,u):=\mathcal{G}^0\left( h\right)$ respectively.
By the definition of $\mathcal{G}^0$, we know that $(n,c,u)$ satisfies (\ref{eq skeleton}) and $(n^i,c^i,u^i)$ satisfies (\ref{eq skeleton}) with $h$ replaced by $h^i$. In this view, we can get

\begin{align}\label{eq z-0}
	 n^i(t)-n(t)=
	 &-\int_0^te^{(t-s)\delta \Delta}\Big\{u^i(s)\cdot \nabla n^i(s)-u(s)\cdot \nabla n(s)\Big\}ds\nonumber\\
	&-\int_0^te^{(t-s)\delta \Delta}\Big\{\nabla\cdot\Big(\chi(c^i(s))n^i(s)\nabla c^i(s)-\chi(c(s))n(s)\nabla c(s)\Big)\Big\}ds,\nonumber\\
	 c^i(t)-c(t)=
	 &-\int_0^te^{(t-s)\mu \Delta}\Big\{u^i(s)\cdot \nabla c^i(s)-u(s)\cdot \nabla c(s)\Big\}ds\nonumber\\
	 &-\int_0^te^{(t-s)\mu \Delta}\Big\{k(c^i(s))n^i(s)-k(c(s))n(s)\Big\}ds,\nonumber\\
	 u^i(t)-u(t)=
	 &-\int_0^te^{-(t-s)\nu A}\mathcal{P}\Big\{(u^i(s)\cdot \nabla)u^i(s)-(u(s)\cdot \nabla)u(s)\Big\}ds\nonumber\\
	 &-\int_0^te^{-(t-s)\nu A}\mathcal{P}\Big\{n^i(s)\nabla\phi-n(s)\nabla\phi \Big\}ds \nonumber\\
	&+\int_0^te^{-(t-s)\nu A}\left(\sigma(u^i(s))h^i(s)-\sigma(u(s))h(s)\right)ds.
\end{align}

Similar to the proofs of (\ref{eq Lem2.1 01}) and (\ref{eq Lem2.1 03}), for $\beta\in(\frac{1}{q},\frac{1}{2})$ and $\gamma\in (\frac{1}{2},1)$, we have,
\begin{align}\label{n z-0}
	&\|n^i(t)-n(t)\|_{L^\infty}\nonumber\\
	\leqslant&
	C_{N,T}\int_0^t(t-s)^{-\frac{1}{2}-\beta}\left( \|n^i(s)-n (s)\|_{L^\infty}+\|c^i(s)-c (s)\|_{1,q}+\|u^i(s)-u (s)\|_\alpha \right) ds,
\end{align}
and
\begin{align}\label{c z-0}
	&\|c^i(t)-c(t)\|_{1,q}\nonumber\\
\leqslant&C_{N,T}\int_0^t(t-s)^{-\gamma}\left( \|n^i(s)-n (s)\|_{L^\infty}+\|c^i(s)-c (s)\|_{1,q}+\|u^i(s)-u (s)\|_\alpha \right) ds.
\end{align}

We now estimate $\|u^i(t)-u(t)\|_\alpha$. By applying $A^\alpha$ to both side of the third eqaution in (\ref{eq z-0}),
\begin{align}\label{u z-0 0}
	A^\alpha u^i(t)-A^\alpha u(t)
	=&
	\int_0^te^{-(t-s)\nu A}A^\alpha\mathcal{P}\Big\{(u^i(s)\cdot \nabla)u^i(s)-(u(s)\cdot \nabla)u(s)\Big\}ds\nonumber\\
	&+\int_0^te^{-(t-s)\nu A}A^\alpha\mathcal{P}\Big\{n^i(s)\nabla\phi-n(s)\nabla\phi \Big\}ds \nonumber\\
	&+\int_0^te^{-(t-s)\nu A}A^\alpha\left( \sigma(u^i(s))-\sigma(u(s))\right)h^i(s)ds.\nonumber\\
	&+\int_0^te^{-(t-s)\nu A}A^\alpha\sigma(u(s))\left( h^i(s)-h(s)\right) ds.\nonumber\\
	:=&I_1(t)+I_2(t)+I_3(t)+I_4(t).
\end{align}

For $I_1$, we can estimate
\begin{align}\label{u z-0 I1}
	\|I_1(t)\|_{L^2}
	\leqslant&
	\int_0^t(t-s)^{-\alpha}\|(u^i(s)\cdot \nabla)u^i(s)-(u(s)\cdot \nabla)u(s)\|_{L^2}ds\nonumber\\
	\leqslant&
	\int_0^t(t-s)^{-\alpha}\left( \|(u^i(s)\cdot \nabla)(u^i(s)-u(s))\|_{L^2}+\|(u^i(s)\cdot \nabla-u(s)\cdot \nabla)u(s)\|_{L^2}\right) ds\nonumber\\
	\leqslant&C_{N,T}\int_0^t(t-s)^{-\alpha}\|u^i(s)-u(s)\|_\alpha ds.
\end{align}

Given $\|\nabla\phi\|_{L^\infty}<C$, we obtain
\begin{align}\label{u z-0 I2}
	\|I_2(t)\|_{L^2}
	\leqslant&\int_0^t(t-s)^{-\alpha}\|n^i(s)\nabla\phi-n(s)\nabla\phi \|_{L^2}ds \nonumber\\
	\leqslant&C\int_0^t(t-s)^{-\alpha}\|n^i(s)-n(s) \|_{L^\infty} ds.
\end{align}

To estimate $I_3,I_4$ in (\ref{u z-0 0}), we observe that $I_3(t),I_4(t)$ satisfy the PDEs:
\begin{align}\label{I34 SPDE}
	dI_3(t)=-AI_3(t)dt+A^{\alpha}\left(  \sigma(u^i(t))-\sigma(u(t))\right)h^i(t)dt,\nonumber\\
	dI_4(t)=-AI_4(t)dt+A^{\alpha}\sigma(u(t))\left( h^i(t)-h(t)\right)dt,
\end{align}
respectively. For the first equation in (\ref{I34 SPDE}), by integration by parts, the H\"older inequality and (H.4), we get
\begin{align}
	\|I_3(t)\|_{L^2}^2+ 2\int_0^{t}\| A^{\frac{1}{2}}I_3(s)\|_{L^2}^2ds
	= &2\int_0^{t}\langle A^{\alpha}\left(\sigma(u^i(s))-\sigma(u(s))\right)h^i(s),I_3(s)\rangle_{L^2}ds. \nonumber\\
	\leqslant&2\int_0^{t}\| A^{\alpha}\sigma(u^i(s))-A^{\alpha}\sigma (u(s))\|_{\mathcal{L}^2_0}\|h^i(s)\|_U\|I_3(s)\|_{L^2}ds.\nonumber\\
	\leqslant&\int_0^{t}\| u^i(s)-u(s)\|_{\alpha}^2+\|h^i(s)\|_U^2\|I_3(s)\|_{L^2}^2ds.\nonumber
\end{align}
By using the Gronwall inequality, we can derive for $t\in [0,T]$
\begin{align}\label{u z-0 I3}
	\sup_{0\leqslant s\leqslant {t}}\|I_3(s)\|_{L^2}^2\leqslant C_{N,T} \int_0^t\| u^i(\tau))-u(\tau)\|_{\alpha}^2d\tau.
\end{align}

For the second equation in (\ref{I34 SPDE}), by integration by parts,
\begin{align}\label{u z-0 I4 00}
	\|I_4(t)\|_{L^2}^2+ 2\int_0^{t}\| A^{\frac{1}{2}}I_4(s)\|_{L^2}^2ds
	= 2\int_0^{t}\langle A^{\alpha}\sigma(u(s))\left( h^i(s)-h(s)\right),I_4(s)\rangle_{L^2}ds.
\end{align}
By assumption (H.4), $h^i,h\in S^N$, and the Gronwall inequality, we can get
\begin{align}\label{u z-0 I4 01}
	\sup_{0\leqslant t\leqslant {T}}\|I_4(t)\|_{L^2}^2\leqslant C_{N,T}.
\end{align}

To estimate further for the right side of (\ref{u z-0 I4 00}), we will use the Galerkin approximations.  Let $\{e_i\}_{i=1}^\infty\subset L^2(\mathcal{O})$ to be a orthonormal basis of $L^2(\mathcal{O})$, and let $V_m$ denote the $m$-dimensional subspace of $L^2(\mathcal{O})$, spanned by $\{e_1,e_2,\cdots e_m\}$. Defined $\mathcal{P}_m:L^2(\mathcal{O})\rightarrow V_m$ by
$$\mathcal{P}_mg:=\sum_{i=1}^{m}\langle g,e_i\rangle_{L^2} e_i.$$

Then define $\beta_m^i(t)=\int_0^t\mathcal{P}_mA^\alpha\sigma(u(s))\left( h^i(s)-h(s)\right)ds$, and let $\beta_m^i=(\beta_m^i(t),t\in[0,T])$. Through (\ref{CNT}) and assumption (H.3), we can obtain the uniform bound of $\{\beta_m^i(t)\}_{i\geqslant 1}$ in $C([0,T],V_m)$:
\begin{align}\label{equibound}
	\sup_{i\geqslant 1}\sup_{0\leqslant t\leqslant T}\|\beta_m^i(t)\|_{L^2}\leqslant \big(\int_0^T1+\|u(s)\|_\alpha^2ds\big)^\frac{1}{2}\big(\sup_{i\geqslant 1}\int_0^T\|h^i(s)-h(s)\|_U^2ds\big)^\frac{1}{2}\leqslant C_{N,T}.
\end{align}

Next, we prove the equi-continuous property of $\{\beta_m^i\}_{i\geqslant1}$ in  $C([0,T],V_m)$.
For any $t,s\in[0,T]$ with $s<t$ and $e\in L^2(\mathcal{O})$,
\begin{align}
	|\langle \beta_m^i(t)-\beta_m^i(s),e\rangle_{L^2}|=&|\int_s^t\langle A^\alpha\sigma(u(l))(h^i(l)-h(l)),\mathcal{P}_me\rangle_{L^2}dl|\nonumber\\
	\leqslant&
	\int_s^t\|A^\alpha\sigma(u(l))\|_{\mathcal{L}_0^2}\|h^i(l)-h(l)\|_U\|\mathcal{P}_me\|_{L^2}dl\nonumber\\
	\leqslant&
	C\int_s^t\left( 1+\|u(l)\|_{\alpha}\right)\|h^i(l) -h(l)\|_Udl\nonumber\\
	\leqslant&
C\big(\int_s^t(1+\|u(l)\|_{\alpha})^2dl\big)^\frac{1}{2}\big(\int_s^t\|h^i(l) -h(l)\|_U^2dl\big)^\frac{1}{2}.\nonumber\\
 \leqslant& 2N^\frac{1}{2}C(t-s)^\frac{1}{2}\sup_{l\in (s,t)}(\|u(l)\|_{\alpha}+1).
\end{align}
This gives the equi-continuous property of $\{\beta_m^i\}_{i\geqslant1}$ in  $C([0,T],V_m)$. Consequently, according to the Arzela-Ascoli theorem,
\begin{eqnarray}\label{eq Center 01}
    \{\beta_m^i\}_{i\geqslant 1}\text{ is precompact in } C([0,T],V_m).
\end{eqnarray}
Next,  we will prove that
  \begin{eqnarray}\label{eq 2024 01 23 00}     \lim_{i\rightarrow\infty}\sup_{t\in[0,T]}\|\beta_m^i(t)\|_{L^2}=0.
  \end{eqnarray}
Let $(A^\alpha\sigma(u(l)))^*$ be the adjoint operator of $A^\alpha\sigma(u(l))$. For any $e\in L^2(\mathcal{O})$, we have
$$\int_0^T \|(A^\alpha\sigma(u(l)))^*\mathcal{P}_me\|_U^2dl\leqslant \int_0^T \|\sigma(u(l))\|_{\mathcal{L}^2_{\alpha}}^2\|\mathcal{P}_me\|_{L^2}^2dl\leqslant K\int_0^T1+\|u(l)\|^2_{\alpha}dl<\infty.$$
Hence $(A^\alpha\sigma(u(\cdot)))^*\mathcal{P}_me\in L^2([0,T],U)$.
Combining the weakly convergence of $h^i$ to $h$ in $L^2([0,T],U)$, we can conclude
\begin{align}\label{beta}
	\lim\limits_{i\rightarrow \infty}\langle \beta_m^i(t),e\rangle_{L^2}=&\lim\limits_{i\rightarrow \infty}\int_0^t\langle A^\alpha\sigma(u(l))(h^i(l)-h(l)),\mathcal{P}_me\rangle_{L^2}dl\nonumber\\
	=&\lim\limits_{i\rightarrow \infty}\int_0^t\langle (h^i(l)-h(l)),(A^\alpha\sigma(u(l)))^*\mathcal{P}_me\rangle_{U}dl\nonumber\\=&0.
\end{align}
Hence, by (\ref{eq Center 01}) and (\ref{beta}),
\begin{eqnarray}\label{eq 2024 01}   \lim_{i\rightarrow\infty}\sup_{t\in[0,T]}\|\beta_m^i(t)\|_{L^2}=\lim_{i\rightarrow\infty}\sup_{t\in[0,T]}\|\beta_m^i(t)\|_{V_m}=0.
\end{eqnarray}
The proof of (\ref{eq 2024 01 23 00}) is complete. We remark that the above equality use the fact that $V_m$ is a finite dimentional space. We also point out that the above equality holds for the full sequence.


By integration by parts, we have
\begin{align}
	&\Big|\int_0^t\langle \mathcal{P}_mA^{\alpha}\sigma(u(s))\left(h^i(s)-h(s)\right),I_4(s)\rangle_{L^2}ds\Big|\nonumber\\
	=&\Big|\langle \beta_m^i(t),I_4(t)\rangle_{L^2}-\int_0^t\langle\beta_m^i(s),\mathcal{P}_mdI_4(s)\rangle_{L^2}\Big|\nonumber\\
	=&
	\Big|\langle \beta_m^i(t),I_4(t)\rangle_{L^2}-\int_0^t\langle\beta_m^i(s),-\mathcal{P}_m AI_4(s)ds+\mathcal{P}_mA^{\alpha}\sigma(u(s))\left( h^i(s)-h(s)\right)ds\rangle_{L^2}\Big|\nonumber\\
	\leqslant&
	\|\beta_m^i(t)\|_{L^2}\|I_4(t)\|_{L^2}+\int_0^t\|\beta_m^i(s)\|_{L^2}\left( \|\mathcal{P}_m AI_4(s)\|_{L^2}+\|A^{\alpha}\sigma(u(s))\left( h^i(s)-h(s)\right)\|_{L^2}\right) ds\nonumber\\
	\leqslant&
	\|\beta_m^i(t)\|_{L^2}\|I_4(t)\|_{L^2}+C_m\sup_{0\leqslant t\leqslant {T}}\|\beta_m^i(t)\|_{L^2}\int_0^t\left( \|I_4(s)\|_{L^2}+(1+\|u(s)\|_\alpha)
	\|h^i(s)-h(s)\|_U\right) ds\nonumber\\
	\leqslant&C_{m,N,T}\sup_{0\leqslant t\leqslant {T}}\|\beta_m^i(t)\|_{L^2}.
\end{align}
Here we have used  (\ref{u z-0 I4 01}), (\ref{CNT}), and $h^i,h\in S^N$.  Then, we can estimate $I_4$ by
\begin{align}\label{u z-0 I4}
	\|I_4(t)\|_{L^2}\leqslant&2\|\int_0^t\langle A^{\alpha}\mathcal{P}_m\sigma(u(s))\left(h^i(s)-h(s)\right),I_4(s)\rangle_{L^2}ds\|_{L^2}\nonumber\\
	&+2\|\int_0^t\langle (1-\mathcal{P}_m)A^{\alpha}\sigma(u(s))\left(h^i(s)-h(s)\right),I_4(s)\rangle_{L^2}ds\|_{L^2}\nonumber\\
	\leqslant&C_{m,N,T} \sup_{0\leqslant t\leqslant {T}}\|\beta_m^i(t)\|_{L^2}
 +
 C_{N,T}\Big(\int_0^T\sup_{k\in U:\|k\|_U\leqslant 1}\|(1-\mathcal{P}_m)A^{\alpha}\sigma(u(s))k\|^2_{L^2}ds\Big)^{1/2}.
\end{align}

Finally, by combining equations (\ref{n z-0}), (\ref{c z-0}), (\ref{u z-0 0}), (\ref{u z-0 I1}), (\ref{u z-0 I2}), (\ref{u z-0 I3}), and (\ref{u z-0 I4}), and applying Lemma \ref{GGI}, we obtain
\begin{align}\label{eq 2024 01 30 00}
	&\|(n^i-n,c^i-c,u^i-u)\|_{S_T}\nonumber\\
 &\leqslant C_{m,N,T}\sup_{0\leqslant t\leqslant {T}}\|\beta_m^i(t)\|_{L^2}+C_{N,T}\Big(\int_0^T\sup_{k\in U:\|k\|_U\leqslant 1}\|(1-\mathcal{P}_m)A^{\alpha}\sigma(u(s))k\|^2_{L^2}ds\Big)^{1/2}.
\end{align}
Notice that, by the definition of the Hilbert-Schmidt operator and
$$
\int_0^T\sup_{k\in U:\|k\|_U\leqslant 1}\|(1-\mathcal{P}_m)A^{\alpha}\sigma(u(s))k\|^2_{L^2}ds
\leqslant
\int_0^T \|\sigma(u(l))\|_{\mathcal{L}^2_{\alpha}}^2dl
\leqslant K\int_0^T1+\|u(l)\|^2_{\alpha}dl<\infty,
$$
we have
\begin{eqnarray}\label{eq 20240127 00}
    \lim_{m\rightarrow\infty}\int_0^T\sup_{k\in U:\|k\|_U\leqslant 1}\|(1-\mathcal{P}_m)A^{\alpha}\sigma(u(s))k\|^2_{L^2}ds
    =
    0.
\end{eqnarray}
Now first letting $i\rightarrow\infty$  and then letting $m\rightarrow\infty$ in (\ref{eq 2024 01 30 00}), by (\ref{eq 2024 01 23 00}) and (\ref{eq 20240127 00}), we yield
$$
\lim_{i\rightarrow\infty}\|(n^i-n,c^i-c,u^i-u)\|_{S_T}=0.
$$

The proof of the verification of (a) is complete.	\hfill $\Box$

\section{Verification of (b) in Condition \ref{cod}}\label{b}
\setcounter{equation}{0}
This section is dedicated to verifying part (b) in Condition \ref{cod} in the proof of Theorem \ref{Thm main 1}.
To do so, we first give some preparations.

Recall $\mathcal{G}^\varepsilon$ in the proof of Theorem \ref{Thm main 1}.  For any $N>0$ and $\{h^\varepsilon\}_{\varepsilon>0}\subset \mathcal{P}^N$, we can use the Yamada-Watanabe Theorem and the Girsanov transformation to determine that $(n^\varepsilon,c^\varepsilon,u^\varepsilon):=\mathcal{G}^\varepsilon(W_\cdot+\frac{1}{\varepsilon}\int_0^\cdot h^\varepsilon(s)ds)\in  C([0,T],C^0(\bar{\mathcal{O}}))\times C([0,T],C^0(\bar{\mathcal{O}})) \times C([0,T],D(A^\alpha))$ satisfies
\begin{eqnarray}\label{SCE living space}
    (n^\varepsilon,c^\varepsilon,u^\varepsilon)\in L^\infty([0,T],C^0(\bar{\mathcal{O}}))
    \times
    L^\infty([0,T],W^{1,q}(\mathcal{O}))
    \times
L^\infty([0,T],D(A^\alpha)),\ \ \mathbb{P}\text{-a.s.,}
\end{eqnarray}
and
it is the unique mild solution of
\begin{align}
	& dn^\varepsilon+u^\varepsilon\cdot \nabla n^\varepsilon dt=\delta \Delta n^\varepsilon dt-\nabla\cdot(\chi(c^\varepsilon)n^\varepsilon\nabla c^\varepsilon)dt,\nonumber\\
	& dc^\varepsilon+u^\varepsilon\cdot \nabla c^\varepsilon dt=\mu \Delta c^\varepsilon dt-k(c^\varepsilon)n^\varepsilon dt,\nonumber\\
	& du^\varepsilon+(u^\varepsilon\cdot \nabla)u^\varepsilon dt+\nabla P^\varepsilon dt=\nu \Delta u^\varepsilon dt-n^\varepsilon\nabla\phi ~dt+\varepsilon\sigma(u^\varepsilon )dW_t+\sigma(u^\varepsilon)h^\varepsilon dt,\\
	& \nabla\cdot u^\varepsilon=0,\ \ \ \ \ t>0,\ x\in\mathcal{O},\nonumber
\end{align}
that is, $(n^\varepsilon,c^\varepsilon,u^\varepsilon)$ satisfies
\begin{align}\label{control eq}
	& n^\varepsilon(t)=e^{t\delta \Delta}n_0-\int_0^te^{(t-s)\delta \Delta}\Big\{u^\varepsilon(s)\cdot \nabla n^\varepsilon(s)\Big\}ds-\int_0^te^{(t-s)\delta \Delta}\Big\{\nabla\cdot\Big(\chi(c^\varepsilon(s))n^\varepsilon(s)\nabla c^\varepsilon(s)\Big)\Big\}ds,\nonumber\\
	& c^\varepsilon(t)=e^{t\mu \Delta}c_0-\int_0^te^{(t-s)\mu \Delta}\Big\{u^\varepsilon(s)\cdot \nabla c^\varepsilon(s)\Big\}ds-\int_0^te^{(t-s)\mu \Delta}\Big\{k(c^\varepsilon(s))n^\varepsilon(s)\Big\}ds,\nonumber\\
	& u^\varepsilon(t)=e^{-t\nu A}u_0-\int_0^te^{-(t-s)\nu A}\mathcal{P}\Big\{(u^\varepsilon(s)\cdot \nabla)u^\varepsilon(s)\Big\}ds-\int_0^te^{-(t-s)\nu A}\mathcal{P}\Big\{n^\varepsilon(s)\nabla\phi \Big\}ds \nonumber\\
	&\quad\quad\quad\quad\quad\quad\quad+\int_0^te^{-(t-s)\nu A}\varepsilon\sigma(u^\varepsilon(s))dW_s+\int_0^te^{-(t-s)\nu A}\mathcal{P}\Big\{\sigma(u^\varepsilon(s))h^\varepsilon(s)\Big\}ds,
\end{align}
$\mathbb{P}$-a.s.

	For $M>0$, define
 \begin{align}\label{tauM}
     \tau_M^\varepsilon=\inf\{t\geqslant 0,\ \|n^\varepsilon\|_{\Upsilon^n_t}+\|c^\varepsilon\|_{\Upsilon^c_t}+\|u^\varepsilon\|_{\Upsilon^u_t}\geqslant M\}\wedge T.
 \end{align}
Let $\tau ^\varepsilon=\lim_{M\rightarrow\infty}\tau_M^\varepsilon$. (\ref{SCE living space}) implies that $\tau ^\varepsilon=T$. The following three estimates to $(n^\varepsilon,c^\varepsilon,u^\varepsilon)$ can be simliarly proved as Lemma \ref{lem 3.1}, Lemma \ref{lem 3.2} and Corollary \ref{cor 3.1} in this paper, together with Lemma 3.1, Lemma 3.2 and Corollary 3.1 in \cite{Zhai Zhang 2020}.
\begin{lemma}\label{lem 4.1}
	Let $p>1$, $h^\varepsilon\in \mathcal{P}^N$, and $r\in [1, \frac{p}{p-1}]$. Then there exist constants $C_T$ and $C_p$ such that
	\begin{equation}\label{4.1.0}
		\int_0^{T}\|n^\varepsilon(t, \cdot)\|_{L^p}^rdt
		\leqslant
		C_T\left (\int_0^{T}\int_{\mathcal{O}} \frac{|\nabla n^\varepsilon(t,x)|^2}{n^\varepsilon(t,x)}dxdt +1\right )^{\frac{(p-1)r}{p}},
	\end{equation}
and
\begin{equation}\label{4.1.1}
	\int_{\mathcal{O}}(n^\varepsilon)^p(t,x)dx
	\leqslant (\int_{\mathcal{O}}(n^\varepsilon)^p(0,x)dx+1)e^{C_p\int_0^t\int_{\mathcal{O}}|\nabla c^\varepsilon(s,x)|^4dxds},\ t\in[0,{T}].
\end{equation}
Here, $C_T$ and $C_p$ are independent of $\varepsilon$.
\end{lemma}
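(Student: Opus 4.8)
The plan is to use the fact that in the controlled system (\ref{control eq}) the stochastic integral appears only in the $u^\varepsilon$-equation, so that both bounds for $n^\varepsilon$ are \emph{pathwise} and can be derived exactly as in the deterministic theory. First I would fix $\omega$ in a set of full $\mathbb{P}$-measure; by (\ref{SCE living space}) the velocity $u^\varepsilon(\omega,\cdot)$ lies in $L^\infty([0,T],D(A^\alpha))\hookrightarrow L^\infty([0,T],C^0(\bar{\mathcal{O}}))$, is divergence free and vanishes on $\partial\mathcal{O}$, and $(n^\varepsilon(\omega,\cdot),c^\varepsilon(\omega,\cdot))$ solves, in the pathwise (mild/variational) sense, a deterministic chemotaxis--fluid system driven by the given field $u^\varepsilon(\omega,\cdot)$, with $n^\varepsilon(\omega,\cdot)>0$ by the maximum principle. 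Hence the computations behind Corollary 4.2 and Lemma 4.5 in \cite{Winkler} (which underlie Lemma \ref{lem 3.1}), together with Lemma 3.1 in \cite{Zhai Zhang 2020}, apply verbatim along every such path, and the only point to verify is that the constants produced are independent of $\varepsilon$.

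For (\ref{4.1.1}) I would multiply the first equation of (\ref{control eq}) by $p(n^\varepsilon)^{p-1}$ and integrate over $\mathcal{O}$: the convective term equals $\int_{\mathcal{O}}u^\varepsilon\cdot\nabla\big((n^\varepsilon)^p\big)\,dx$ and vanishes because $\nabla\cdot u^\varepsilon=0$ and $u^\varepsilon|_{\partial\mathcal{O}}=0$; the diffusion term contributes $-\delta p(p-1)\int_{\mathcal{O}}(n^\varepsilon)^{p-2}|\nabla n^\varepsilon|^2\,dx$; and the chemotactic term, after integration by parts, using (H.1)(a) to bound $\chi(c^\varepsilon)$ on the range of $c^\varepsilon$, Young's inequality and the Gagliardo--Nirenberg inequality exactly as in Lemma 4.5 of \cite{Winkler}, is controlled, up to absorption of the diffusion gradient, by $C_p\big(1+\int_{\mathcal{O}}|\nabla c^\varepsilon(t,x)|^4\,dx\big)\big(1+\int_{\mathcal{O}}(n^\varepsilon)^p(t,x)\,dx\big)$, with $C_p$ depending only on $p,\delta,\chi,\mathcal{O}$; Gronwall's inequality applied to $t\mapsto\int_{\mathcal{O}}(n^\varepsilon)^p(t,x)\,dx$ then gives (\ref{4.1.1}). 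For (\ref{4.1.0}) I would first record mass conservation, $\|n^\varepsilon(t)\|_{L^1}=\|n_0\|_{L^1}$, obtained by integrating the $n^\varepsilon$-equation (all boundary and convective terms vanish by $\nabla\cdot u^\varepsilon=0$ and the no-flux condition $\partial_\nu c^\varepsilon=0$), and then apply the two-dimensional Gagliardo--Nirenberg inequality to $\sqrt{n^\varepsilon}$: with $\theta=\frac{p-1}{p}\in(0,1)$,
\begin{align*}
\|n^\varepsilon(t)\|_{L^p}=\big\|\sqrt{n^\varepsilon(t)}\big\|_{L^{2p}}^2\leqslant C\big\|\nabla\sqrt{n^\varepsilon(t)}\big\|_{L^2}^{2\theta}\big\|\sqrt{n^\varepsilon(t)}\big\|_{L^2}^{2(1-\theta)}+C\big\|\sqrt{n^\varepsilon(t)}\big\|_{L^2}^2 .
\end{align*}
Since $\big\|\sqrt{n^\varepsilon(t)}\big\|_{L^2}^2=\|n_0\|_{L^1}$ and $\big\|\nabla\sqrt{n^\varepsilon(t)}\big\|_{L^2}^2=\tfrac14\int_{\mathcal{O}}\frac{|\nabla n^\varepsilon(t,x)|^2}{n^\varepsilon(t,x)}\,dx$, raising to the power $r$, integrating over $[0,T]$ and applying H\"older's inequality with exponent $1/(r\theta)\geqslant 1$ — which is exactly where the hypothesis $r\in[1,\tfrac{p}{p-1}]$ enters, since then $r\theta=\tfrac{(p-1)r}{p}\leqslant 1$ — yields (\ref{4.1.0}).

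It remains to check that $C_T$ and $C_p$ do not depend on $\varepsilon$: they are assembled only from Sobolev and Gagliardo--Nirenberg constants on $\mathcal{O}$, from the structural hypotheses (H.1)--(H.2), from $T$, and from $\|n_0\|_{L^1}$, none of which involves $\varepsilon$ or $u^\varepsilon$; and, crucially, no dependence on $u^\varepsilon$ is created along the way because every term containing $u^\varepsilon$ is removed by integration by parts using $\nabla\cdot u^\varepsilon=0$. I do not expect a genuine obstacle in this lemma; the real analytic work for the stochastic system — It\^o's formula, the Burkholder--Davis--Gundy inequality, and control of the Girsanov drift $\sigma(u^\varepsilon)h^\varepsilon$ uniformly for $h^\varepsilon\in\mathcal{P}^N$ — is confined to the subsequent $u^\varepsilon$-estimates (the analogues of Lemma \ref{lem 3.2} and Corollary \ref{cor 3.1}) and plays no role here.
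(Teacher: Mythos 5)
Your proposal is correct and follows essentially the same route as the paper, which proves Lemma \ref{lem 4.1} by observing that the $n^\varepsilon$-equation contains no stochastic integral and then invoking the deterministic arguments behind Lemma \ref{lem 3.1} (i.e.\ Corollary 4.2 and Lemma 4.5 in \cite{Winkler}, together with the corresponding lemmas in \cite{Zhai Zhang 2020}); you simply spell out those pathwise energy/Gagliardo--Nirenberg computations that the paper cites by reference. The exponent bookkeeping ($\theta=\tfrac{p-1}{p}$, H\"older with $r\theta\leqslant 1$) and the $\varepsilon$-independence of the constants are handled correctly.
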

\begin{lemma}\label{lem 4.2}
	Let $\theta\in (0,1)$ and $h^\varepsilon\in \mathcal{P}^N$. Then there exists a constant $C_{N,T}$ independent of $\varepsilon$ such that, for all $0<\varepsilon\leqslant 1$,	\begin{align}\label{4.1}
		&\mathbb{E}\Big[\sup_{0\leqslant t\leqslant {T}}\|u^\varepsilon(t)\|_{L^2}^2\Big]+\mathbb{E}\Big[\int_0^{T}\|\nabla u^\varepsilon(t)\|_{L^2}^2dt\Big]\nonumber\\
		\leqslant& C_{N,T}\mathbb{E}\Big[\Big (\int_0^{T}\int_{\mathcal{O}} \frac{|\nabla n^\varepsilon(t,x)|^2}{n^\varepsilon(t,x)}dxdt +1\Big )^{\frac{\theta}{2}}\Big].
	\end{align}
	Moreover,
	\begin{align}\label{4.2}
		\sup_{0<\varepsilon\leqslant1 }\mathbb{E}\Big[\int_0^{T}\int_{\mathcal{O}}|u^\varepsilon(t,x)|^4dxdt\Big]
		\leqslant C_{N,T}\mathbb{E}\Big[\Big (\int_0^{T}\int_{\mathcal{O}} \frac{|\nabla n^\varepsilon(t,x)|^2}{n^\varepsilon(t,x)}dxdt +1\Big )^{\theta}\Big].
	\end{align}
\end{lemma}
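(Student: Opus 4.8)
The plan is to run, term by term, the energy computation behind the deterministic Lemma \ref{lem 3.2} --- which itself follows Corollary 4.2 and Lemma 4.3 of \cite{Winkler} --- and to treat separately the two new contributions produced by the noise and by the control. Write $\hat\sigma:=\varepsilon\sigma$, so that by assumption (H.3) one has $\|\hat\sigma(v)\|_{\mathcal{L}^2_0}^2\le\varepsilon^2K(1+\|v\|_H^2)\le K(1+\|v\|_H^2)$ for $0<\varepsilon\le1$. First I would apply It\^o's formula to $t\mapsto\|u^\varepsilon(t)\|_{L^2}^2$ in the Gelfand triple $W^{1,2}(\mathcal{O})\subset L^2(\mathcal{O})\subset W^{1,2}(\mathcal{O})^*$ along the equation for $u^\varepsilon$; since $\nabla\cdot u^\varepsilon=0$ and $u^\varepsilon|_{\partial\mathcal{O}}=0$, the convective term $\langle(u^\varepsilon\cdot\nabla)u^\varepsilon,u^\varepsilon\rangle_{L^2}$ and the pressure term drop, giving, $\mathbb{P}$-a.s.,
\begin{align*}
\|u^\varepsilon(t)\|_{L^2}^2+2\nu\int_0^t\|\nabla u^\varepsilon(s)\|_{L^2}^2ds
&=\|u_0\|_{L^2}^2-2\int_0^t\langle u^\varepsilon(s),\mathcal{P}\{n^\varepsilon(s)\nabla\phi\}\rangle_{L^2}ds
+2\int_0^t\langle u^\varepsilon(s),\sigma(u^\varepsilon(s))h^\varepsilon(s)\rangle_{L^2}ds\\
&\quad+\varepsilon^2\int_0^t\|\sigma(u^\varepsilon(s))\|_{\mathcal{L}^2_0}^2ds
+2\int_0^t\langle u^\varepsilon(s),\hat\sigma(u^\varepsilon(s))dW_s\rangle_{L^2}.
\end{align*}

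For the drag term I would proceed exactly as in \cite{Winkler}: estimate $|\langle u^\varepsilon,\mathcal{P}\{n^\varepsilon\nabla\phi\}\rangle_{L^2}|\le\|\nabla\phi\|_{L^\infty}\|u^\varepsilon\|_{L^2}\|n^\varepsilon\|_{L^2}$, invoke the Poincar\'e inequality $\|u^\varepsilon\|_{L^2}\le C_P\|\nabla u^\varepsilon\|_{L^2}$ on $H$, use Young's inequality to keep $\nu\int_0^t\|\nabla u^\varepsilon\|_{L^2}^2ds$ on the left, and then bound $\int_0^t\|n^\varepsilon(s)\|_{L^2}^2ds$ via Lemma \ref{lem 4.1} and a H\"older estimate in time, which produces the factor $(\int_0^T\int_{\mathcal{O}}|\nabla n^\varepsilon|^2/n^\varepsilon\,dx\,ds+1)^{\theta/2}$ --- this is the content of (\ref{3.1}). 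For the control drift I would use (H.3), Cauchy--Schwarz, $\|u^\varepsilon\|_{L^2}\|\sigma(u^\varepsilon)\|_{\mathcal{L}^2_0}\le C(1+\|u^\varepsilon\|_{L^2}^2)$ and $\|h^\varepsilon\|_U\le 1+\|h^\varepsilon\|_U^2$ to get $2\int_0^t\langle u^\varepsilon,\sigma(u^\varepsilon)h^\varepsilon\rangle_{L^2}ds\le C\int_0^t(1+\|h^\varepsilon(s)\|_U^2)(1+\|u^\varepsilon(s)\|_{L^2}^2)ds$, while the It\^o correction is $\le\varepsilon^2K\int_0^t(1+\|u^\varepsilon\|_{L^2}^2)ds$. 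Collecting everything, using $\int_0^T\|h^\varepsilon(s)\|_U^2ds\le N$ $\mathbb{P}$-a.s. (since $h^\varepsilon\in\mathcal{P}^N$), taking $\sup$ over $[0,t]$ and observing that every contribution apart from $\int_0^t(1+\|h^\varepsilon\|_U^2)\|u^\varepsilon\|_{L^2}^2ds$ and the martingale is nondecreasing in $t$, a pathwise Gronwall argument gives, $\mathbb{P}$-a.s.,
\[
\sup_{0\le t\le T}\|u^\varepsilon(t)\|_{L^2}^2+\nu\int_0^T\|\nabla u^\varepsilon(s)\|_{L^2}^2ds
\le C_{N,T}\Big(1+\Big(\int_0^T\!\!\int_{\mathcal{O}}\tfrac{|\nabla n^\varepsilon|^2}{n^\varepsilon}\,dx\,ds+1\Big)^{\theta/2}+\sup_{0\le t\le T}\Big|\int_0^t\langle u^\varepsilon(s),\hat\sigma(u^\varepsilon(s))dW_s\rangle_{L^2}\Big|\Big),
\]
with $C_{N,T}=e^{C(T+N)}$ independent of $\varepsilon$.

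Taking expectations, it remains to control $\mathbb{E}\big[\sup_{[0,T]}\big|\int_0^t\langle u^\varepsilon,\hat\sigma(u^\varepsilon)dW_s\rangle_{L^2}\big|\big]$. By the Burkholder--Davis--Gundy inequality this is $\le C\,\mathbb{E}\big[\big(\int_0^T\|u^\varepsilon\|_{L^2}^2\|\hat\sigma(u^\varepsilon)\|_{\mathcal{L}^2_0}^2ds\big)^{1/2}\big]$; using (H.3), Poincar\'e and Young one absorbs $\tfrac12\mathbb{E}[\sup_{[0,T]}\|u^\varepsilon\|_{L^2}^2]$ into the left-hand side, the factor $\varepsilon^2\le1$ carried by $\|\hat\sigma(u^\varepsilon)\|_{\mathcal{L}^2_0}^2$ keeping the residual constant uniform in $\varepsilon$. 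To legitimize the BDG step and the Fubini interchanges --- and to ensure all integrals are a priori finite --- I would carry out the whole computation first on $[0,t\wedge\tau_M^\varepsilon]$ (by (\ref{tauM}) every quantity is then bounded by an $M$-dependent constant) and only afterwards let $M\to\infty$, using that $\tau^\varepsilon=\lim_M\tau_M^\varepsilon=T$ by (\ref{SCE living space}); this yields (\ref{4.1}). For (\ref{4.2}) I would insert the a.s.\ bound just obtained into the two-dimensional Ladyzhenskaya inequality $\|u^\varepsilon(t)\|_{L^4}^4\le C\|u^\varepsilon(t)\|_{L^2}^2\|\nabla u^\varepsilon(t)\|_{L^2}^2$ (as in (3.39) of \cite{Zhai Zhang 2020}), so that $\int_0^T\|u^\varepsilon(t)\|_{L^4}^4dt\le C\big(\sup_{[0,T]}\|u^\varepsilon\|_{L^2}^2\big)\big(\int_0^T\|\nabla u^\varepsilon\|_{L^2}^2dt\big)$, then square the a.s.\ bound, take expectations, and control the second moment of the stochastic integral by BDG once more (again using $\varepsilon^2\le1$), which turns the exponent $\theta/2$ into $\theta$. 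The main obstacle is precisely this closing step: the control drift forces a \emph{random} Gronwall coefficient $1+\|h^\varepsilon\|_U^2$ that interacts with the BDG bound for the multiplicative-noise martingale, so that a naive estimate does not close uniformly in $\varepsilon\in(0,1]$; it is the combination of the localization by $\tau_M^\varepsilon$, the $\varepsilon^2$ decay of the quadratic variation and the a.s.\ bound $\int_0^T\|h^\varepsilon\|_U^2ds\le N$ that resolves it. All the remaining estimates are routine and parallel the deterministic Lemma \ref{lem 3.2} and Corollary \ref{cor 3.1} together with Lemma 3.1, Lemma 3.2 and Corollary 3.1 of \cite{Zhai Zhang 2020}.
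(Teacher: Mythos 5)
Your proposal is correct and follows exactly the route the paper intends (the paper omits the proof of Lemma \ref{lem 4.2}, deferring to the deterministic Lemma \ref{lem 3.2} of this paper together with the stochastic estimates of \cite{Zhai Zhang 2020}): It\^o's formula for $\|u^\varepsilon\|_{L^2}^2$, the Winkler-type treatment of the drag term, the control drift handled via (H.3) and $\int_0^T\|h^\varepsilon(s)\|_U^2ds\leqslant N$ a.s., a pathwise Gronwall argument \emph{before} taking expectations, BDG for the martingale with the $\varepsilon^2$ factor, localization by $\tau_M^\varepsilon$, and Gagliardo--Nirenberg for (\ref{4.2}). Two details to tighten in a full write-up: the drag term must be paired as $\|n^\varepsilon\|_{L^p}\|u^\varepsilon\|_{L^{p/(p-1)}}$ with $p$ close to $1$ and a Gagliardo--Nirenberg bound on $\|u^\varepsilon\|_{L^{p/(p-1)}}$ (taking $p=2$ and $\int_0^t\|n^\varepsilon\|_{L^2}^2ds$ as you wrote only yields exponent $\geqslant 1/2$ from Lemma \ref{lem 4.1}, never $\theta/2$), and the residual $C\varepsilon^2\mathbb{E}\big[\int_0^T\|u^\varepsilon\|_{L^2}^2ds\big]$ left after the BDG/Young step should be closed by one further Gronwall applied to $t\mapsto\mathbb{E}\big[\sup_{s\leqslant t}\|u^\varepsilon(s)\|_{L^2}^2\big]$ rather than by Poincar\'e absorption into $\nu\int_0^T\|\nabla u^\varepsilon\|_{L^2}^2ds$, whose coefficient is not small uniformly over $\varepsilon\in(0,1]$.
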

\begin{lemma}\label{cor 4.1}
	Let $\theta\in (0,1)$ and $h^\varepsilon\in \mathcal{P}^N$. The following statements hold:
	\begin{equation}\label{4.3}
		\sup_{0<\varepsilon\leqslant1 }\mathbb{E}\Big[\int_0^{T}\int_{\mathcal{O}} \frac{|\nabla n^\varepsilon(t,x)|^2}{n^\varepsilon(t,x)}dxdt +1\Big]\leqslant C_{N,T},
	\end{equation}
	\begin{align}\label{4.4}
		\mathbb{E}\Big[\int_0^{T}\int_{\mathcal{O}}|\nabla c^\varepsilon(t,x)|^4dxdt\Big]
		\leqslant C\mathbb{E}\Big[\Big(\int_0^{T}\int_{\mathcal{O}} \frac{|\nabla n^\varepsilon(t,x)|^2}{n^\varepsilon(t,x)}dxdt +1\Big )^{\theta}\Big].
	\end{align}
 Here, $C_{N,T}$ and $C$ are independent of $\varepsilon$.
\end{lemma}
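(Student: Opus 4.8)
\noindent\emph{Proof idea.} The plan is to transplant the proof of Corollary~\ref{cor 3.1} to the stochastic controlled system (\ref{control eq}), using the crucial observation that neither the $n^\varepsilon$- nor the $c^\varepsilon$-equation in (\ref{control eq}) carries a stochastic integral: their It\^o differentials coincide with the classical time derivatives, and the velocity field $u^\varepsilon$ enters them only as a (random) divergence-free vector field. Hence all the entropy-type manipulations of Winkler go through \emph{pathwise}. Concretely, repeating the computations behind Corollary~4.2, Corollary~4.4 and Lemma~4.5 of \cite{Winkler}, with $g(c)=k(c)/\chi(c)$ and $\rho(c)=\int_0^{c}\frac{d\sigma}{g(\sigma)}$ as in the proof of Corollary~\ref{cor 3.1}, one first derives, $\mathbb{P}$-a.s. and for every $t\in[0,T)$,
\begin{equation}\label{eq plan cor41}
\int_0^{t}\int_{\mathcal{O}}\frac{|\nabla n^\varepsilon|^2}{n^\varepsilon}\,dxds+\frac14\int_0^{t}\int_{\mathcal{O}}g(c^\varepsilon)|D^2\rho(c^\varepsilon)|^2\,dxds\leqslant C_1+C_2\int_0^{t}\int_{\mathcal{O}}|u^\varepsilon(s,x)|^4\,dxds,
\end{equation}
together with $\int_0^{t}\int_{\mathcal{O}}|\nabla c^\varepsilon|^4\,dxds\leqslant C_3\int_0^{t}\int_{\mathcal{O}}g(c^\varepsilon)|D^2\rho(c^\varepsilon)|^2\,dxds$, where $C_1,C_2,C_3$ are independent of $\varepsilon$.

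Next I would take expectations and close these bounds by a Jensen--Young argument. Writing $X^\varepsilon:=\int_0^{T}\int_{\mathcal{O}}\frac{|\nabla n^\varepsilon|^2}{n^\varepsilon}\,dxds+1\geqslant 1$, letting $t\uparrow T$ in (\ref{eq plan cor41}), taking expectations and invoking Lemma~\ref{lem 4.2} gives
\begin{equation*}
\mathbb{E}[X^\varepsilon]\leqslant C_1+1+C_2\,\mathbb{E}\Big[\int_0^{T}\int_{\mathcal{O}}|u^\varepsilon|^4\,dxds\Big]\leqslant C_1+1+C_2C_{N,T}\,\mathbb{E}\big[(X^\varepsilon)^{\theta}\big]\leqslant C_1+1+C_2C_{N,T}\big(\mathbb{E}[X^\varepsilon]\big)^{\theta},
\end{equation*}
the last step by Jensen's inequality (concavity of $x\mapsto x^{\theta}$). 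Since $\theta\in(0,1)$, Young's inequality absorbs the last term and yields $\mathbb{E}[X^\varepsilon]\leqslant C_{N,T}$ uniformly in $\varepsilon\in(0,1]$, which is (\ref{4.3}). For (\ref{4.4}) I would drop the first, nonnegative, term in (\ref{eq plan cor41}) and combine it with the second pathwise bound to get $\int_0^{T}\int_{\mathcal{O}}|\nabla c^\varepsilon|^4\,dxds\leqslant 4C_3\big(C_1+C_2\int_0^{T}\int_{\mathcal{O}}|u^\varepsilon|^4\,dxds\big)$, then take expectations, apply Lemma~\ref{lem 4.2} once more and use $\mathbb{E}[(X^\varepsilon)^{\theta}]\geqslant 1$ to reach (\ref{4.4}).

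The main obstacle, and the reason the statement is only asserted rather than written out in full, is the a priori integrability required to run the Jensen--Young closure: (\ref{SCE living space}) provides only $\mathbb{P}$-a.s.\ finiteness, so $\mathbb{E}[X^\varepsilon]$ (and even $\mathbb{E}[(X^\varepsilon)^{\theta}]$) is not known to be finite in advance. I would deal with this by localization, exactly as in the a priori estimates of \cite{Zhai Zhang 2020}: replace $T$ everywhere by $T\wedge\tau_M^\varepsilon$, with $\tau_M^\varepsilon$ as in (\ref{tauM}). On $[0,T\wedge\tau_M^\varepsilon]$ the norms $\|n^\varepsilon\|_{\Upsilon^n_t}$, $\|c^\varepsilon\|_{\Upsilon^c_t}$, $\|u^\varepsilon\|_{\Upsilon^u_t}$ are bounded by $M$, so the right-hand side of the stopped version of (\ref{eq plan cor41}) is bounded by a deterministic ($M$-dependent) constant; thus the stopped quantity $X_M^\varepsilon$ is bounded and all the expectations in the closure are finite. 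Running the argument with the localized forms of Lemmas~\ref{lem 4.1} and \ref{lem 4.2} (proved by the same reasoning, with constants independent of $M$) produces $\mathbb{E}[X_M^\varepsilon]\leqslant C_{N,T}$ uniformly in $M$ and $\varepsilon$; since $\tau^\varepsilon=\lim_{M\to\infty}\tau_M^\varepsilon=T$ by (\ref{SCE living space}), monotone convergence lets $M\to\infty$ and yields (\ref{4.3}) and then (\ref{4.4}). Apart from this bookkeeping, the only thing to verify with care is that Winkler's identities, originally obtained for the deterministic Chemotaxis--Navier--Stokes system, transfer verbatim here --- which they do, precisely because $n^\varepsilon$ and $c^\varepsilon$ solve noiseless equations driven only by the given field $u^\varepsilon$.
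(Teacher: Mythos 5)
Your proposal is correct and follows exactly the route the paper intends: the paper gives no written proof of this lemma, merely stating that it is proved "similarly" to Lemma \ref{lem 3.1}, Lemma \ref{lem 3.2} and Corollary \ref{cor 3.1} together with the corresponding results in \cite{Zhai Zhang 2020}, and your reconstruction — pathwise transfer of Winkler's entropy inequalities (valid because the $n^\varepsilon$- and $c^\varepsilon$-equations carry no stochastic integral), closure in expectation via Lemma \ref{lem 4.2}, Jensen and Young, with localization by $\tau_M^\varepsilon$ to justify finiteness — is precisely that argument, worked out in more detail than the paper provides.
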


It is easy to see that by (\ref{4.1.0})-(\ref{4.4}), we can find some constant $C_{N,T}$ independent of $\varepsilon$, such that:
\begin{align}\label{20231130}
   & \mathbb{E}\Big[\int_0^{T}\int_{\mathcal{O}} \frac{|\nabla n^\varepsilon(t,x)|^2}{n^\varepsilon(t,x)}dxdt \Big]\vee\mathbb{E}\Big[\int_0^{T}\int_{\mathcal{O}}|\nabla c^\varepsilon(t,x)|^4dxdt\Big]\nonumber\\
    &\vee\mathbb{E}\Big[ \int_0^{T}\int_{\mathcal{O}} |\nabla u^\varepsilon(s,x)|^2dxds\Big]\vee\mathbb{E}\Big[ \sup_{t\in[0,T]}\|u^\varepsilon(t)\|_{L^2}\Big]\leqslant C_{N,T}.
\end{align}

For $R>0$, define the stopping time $\tau^{\varepsilon,R}$ by
\begin{align}\label{tau}
	\tau^{\varepsilon,R}=&\inf\{t>0;\ \int_0^t\int_{\mathcal{O}} \frac{|\nabla n^\varepsilon(s,x)|^2}{n^\varepsilon(s,x)}dxds>R,\ \text{or}\ \int_0^t\int_{\mathcal{O}}|\nabla c^\varepsilon(s,x)|^4dxds>R,\nonumber\\
	&\quad\quad \quad \quad  \ \ \ \ \text{or} \ \int_0^t\int_{\mathcal{O}} |\nabla u^\varepsilon(s,x)|^2dxds>R,\ \text{or}\ \sup_{s\in[0,t]}\|u^\varepsilon(s)\|_{L^2}>R\}.
\end{align}

The proof of the following proposition is similar as Proposition 3.1 in \cite{Zhai Zhang 2020}, so we omit it here.
\begin{proposition}
	For $R>0$ and $T>0$,  there exists some constant $C_{R,T,N}>0$, which is independent of $\varepsilon$, such that
	\begin{equation}\label{4.5}
		\mathbb{E}[\sup_{0\leqslant t\leqslant T\wedge \tau^{\varepsilon,R}}\|n^{\varepsilon}(t )\|_{L^\infty}]+\mathbb{E}[\sup_{0\leqslant t\leqslant T\wedge \tau^{\varepsilon,R}}\|\nabla c^{\varepsilon}(t )\|_{L^q}^2]+\mathbb{E}[\sup_{0\leqslant t\leqslant T\wedge \tau^{\varepsilon,R}}\|u^{\varepsilon}(t )\|_{\alpha}^2]\leqslant C_{R,T,N}.
	\end{equation}
\end{proposition}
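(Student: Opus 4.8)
\emph{Proof strategy.} The argument is a stochastic, localized version of the a priori bootstrap in \cite[(3.52)--(3.66)]{Zhai Zhang 2020} and of the proof of Proposition~\ref{regularity}. The key observation is that on the random interval $[0,T\wedge\tau^{\varepsilon,R}]$ the four quantities in (\ref{tau}) are bounded by $R$ pathwise, which upgrades the integrated bounds of Lemmas~\ref{lem 4.1}--\ref{cor 4.1} to pathwise bounds; in particular (\ref{4.1.1}) together with $\int_0^t\int_{\mathcal{O}}|\nabla c^\varepsilon|^4\,dx\,ds\leqslant R$ yields $\sup_{t\leqslant T\wedge\tau^{\varepsilon,R}}\|n^\varepsilon(t)\|_{L^p}\leqslant C_{p,R}$ for every $p>1$, uniformly in $\varepsilon\in(0,1]$. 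The only genuinely new ingredient is the control of the stochastic convolution, done in expectation with $\varepsilon$-independent constants. \emph{Step 1 (an $H^1$-energy estimate for $u^\varepsilon$).} Apply It\^o's formula to $t\mapsto\|A^{1/2}u^\varepsilon(t)\|_{L^2}^2$ using (\ref{control eq}). Control the Navier--Stokes nonlinearity $\langle Au^\varepsilon,\mathcal{P}(u^\varepsilon\cdot\nabla)u^\varepsilon\rangle_{L^2}$ as in (4.16)--(4.17) of \cite{Winkler} by $\tfrac{\nu}{4}\|Au^\varepsilon\|_{L^2}^2+C\|u^\varepsilon\|_{L^2}^2\|\nabla u^\varepsilon\|_{L^2}^4$; the buoyancy term $\langle Au^\varepsilon,\mathcal{P}(n^\varepsilon\nabla\phi)\rangle_{L^2}$ by $\tfrac{\nu}{4}\|Au^\varepsilon\|_{L^2}^2+C\|n^\varepsilon\|_{L^2}^2$; the It\^o correction $\varepsilon^2\|A^{1/2}\sigma(u^\varepsilon)\|_{\mathcal{L}^2_0}^2$ and the control drift $\langle A^{1/2}u^\varepsilon,A^{1/2}\sigma(u^\varepsilon)h^\varepsilon\rangle_{L^2}$ by $C(1+\|h^\varepsilon\|_U^2)(1+\|A^{1/2}u^\varepsilon\|_{L^2}^2)$ via (H.5), Young's inequality and $\varepsilon\leqslant1$. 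After absorbing the $\|Au^\varepsilon\|_{L^2}^2$ terms, taking the supremum over $[0,t\wedge\tau^{\varepsilon,R}]$, estimating the remaining martingale by Burkholder--Davis--Gundy and absorbing, a stopped Gronwall argument applies, the kernel being integrable on $[0,T]$ with $L^1$-norm $\leqslant C(R^3+T+N)$ on $[0,\tau^{\varepsilon,R}]$ since $\|h^\varepsilon\|_{L^2([0,T],U)}^2\leqslant N$. This gives
$$\mathbb{E}\Big[\sup_{0\leqslant t\leqslant T\wedge\tau^{\varepsilon,R}}\|A^{1/2}u^\varepsilon(t)\|_{L^2}^2\Big]+\mathbb{E}\Big[\int_0^{T\wedge\tau^{\varepsilon,R}}\|Au^\varepsilon(s)\|_{L^2}^2\,ds\Big]\leqslant C_{R,T,N}.$$

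\emph{Step 2 (the $D(A^\alpha)$-estimate for $u^\varepsilon$).} From the mild form of $u^\varepsilon$ in (\ref{control eq}) estimate $\|A^\alpha u^\varepsilon(t)\|_{L^2}$ term by term using the smoothing bound (\ref{Aalpha}), exactly as in (3.52)--(3.54) of \cite{Zhai Zhang 2020}: the Navier--Stokes nonlinearity and the buoyancy term are controlled by the Step~1 bound and $\sup_t\|n^\varepsilon(t)\|_{L^2}\leqslant C_R$, and the control drift $\int_0^te^{-(t-s)\nu A}\mathcal{P}\sigma(u^\varepsilon(s))h^\varepsilon(s)\,ds$ by $C\int_0^t(1+\|u^\varepsilon(s)\|_\alpha)\|h^\varepsilon(s)\|_U\,ds$ via (H.4) and H\"older. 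For the stochastic convolution $Z^\varepsilon(t)=\varepsilon\int_0^te^{-(t-s)\nu A}\sigma(u^\varepsilon(s))\,dW_s$, note that (H.4) makes $A^\alpha\sigma(u^\varepsilon)$ an $\mathcal{L}^2(U,H)$-valued process, so $A^\alpha Z^\varepsilon$ is an $H$-valued stochastic convolution of the analytic semigroup; applying the factorization method with any exponent $\theta\in(0,\tfrac12)$ (and replacing the integrand by $\sigma(u^\varepsilon(s))\mathbf{1}_{\{s\leqslant\tau^{\varepsilon,R}\}}$ to stop it) yields, through Burkholder--Davis--Gundy and H\"older,
$$\mathbb{E}\Big[\sup_{0\leqslant t\leqslant T\wedge\tau^{\varepsilon,R}}\|A^\alpha Z^\varepsilon(t)\|_{L^2}^2\Big]\leqslant C\,\varepsilon^2\,\mathbb{E}\Big[\int_0^{T\wedge\tau^{\varepsilon,R}}\bigl(1+\|u^\varepsilon(s)\|_\alpha^2\bigr)\,ds\Big].$$
Collecting the bounds, $\phi(t):=\mathbb{E}\big[\sup_{s\leqslant t\wedge\tau^{\varepsilon,R}}\|u^\varepsilon(s)\|_\alpha^2\big]$ satisfies an integral inequality of the type covered by Lemma~\ref{GGI}; running it first with the extra localization at $\tau^\varepsilon_M$ of (\ref{tauM}) to guarantee finiteness, invoking Lemma~\ref{GGI}, and letting $M\to\infty$ (using $\tau^\varepsilon_M\uparrow T$ $\mathbb{P}$-a.s., which holds by (\ref{SCE living space})) gives $\mathbb{E}\big[\sup_{t\leqslant T\wedge\tau^{\varepsilon,R}}\|u^\varepsilon(t)\|_\alpha^2\big]\leqslant C_{R,T,N}$.

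\emph{Step 3 ($\nabla c^\varepsilon$ in $L^q$, $n^\varepsilon$ in $L^\infty$, and conclusion).} Since $D(A^\alpha)\hookrightarrow C^0(\bar{\mathcal{O}})$, Step~2 gives a uniform bound on $\|u^\varepsilon\|_{L^\infty}$ on $[0,T\wedge\tau^{\varepsilon,R}]$; together with the uniform $L^p$-bounds on $n^\varepsilon$, estimating $\|\nabla c^\varepsilon(t)\|_{L^q}$ from the mild form of $c^\varepsilon$ via the Neumann heat-semigroup smoothing exactly as in (3.61)--(3.66) of \cite{Zhai Zhang 2020} (using the propagated $L^\infty$-bound of $c^\varepsilon$ and the cutoff on $\int|\nabla c^\varepsilon|^4$) yields $\mathbb{E}\big[\sup_{t\leqslant T\wedge\tau^{\varepsilon,R}}\|\nabla c^\varepsilon(t)\|_{L^q}^2\big]\leqslant C_{R,T,N}$. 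Finally, insert these bounds into the mild form of $n^\varepsilon$, use $\nabla\cdot(u^\varepsilon n^\varepsilon)=u^\varepsilon\cdot\nabla n^\varepsilon$ and the smoothing estimate $\|B^\beta e^{-t(B-1)}\nabla\cdot w\|_{L^q}\leqslant C(1+t^{-\frac12-\beta})\|w\|_{L^q}$ with $\beta\in(\tfrac1q,\tfrac12)$ (applied to $w=n^\varepsilon\nabla c^\varepsilon$ and $w=u^\varepsilon n^\varepsilon$), and close by Lemma~\ref{GGI} to get $\mathbb{E}\big[\sup_{t\leqslant T\wedge\tau^{\varepsilon,R}}\|n^\varepsilon(t)\|_{L^\infty}\big]\leqslant C_{R,T,N}$. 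Adding the three estimates gives (\ref{4.5}).

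\emph{Main obstacle.} Everything except the stochastic convolution is a routine localization of deterministic computations already carried out in \cite{Zhai Zhang 2020,Winkler} and in Proposition~\ref{regularity}; the actual difficulty is to produce a supremum-in-time, $\varepsilon$-uniform bound for $\int_0^\cdot e^{-(t-s)\nu A}\varepsilon\sigma(u^\varepsilon(s))\,dW_s$ in $D(A^\alpha)$ with $\alpha>\tfrac12$, which is what forces one to commute $A^\alpha$ through the convolution using (H.4), apply the factorization method with $\theta\in(0,\tfrac12)$, and then close the still-implicit estimate with the two nested stopping times $\tau^{\varepsilon,R}$, $\tau^\varepsilon_M$ and the generalized Gronwall inequality of Lemma~\ref{GGI}.
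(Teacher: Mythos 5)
Your proposal is correct and follows essentially the same route the paper intends: the paper omits this proof, referring to Proposition 3.1 of \cite{Zhai Zhang 2020}, and your sketch is a faithful localized reconstruction of that argument — pathwise bounds from the definition of $\tau^{\varepsilon,R}$ upgrading Lemmas \ref{lem 4.1}--\ref{cor 4.1}, the sequential bootstrap $u^\varepsilon\to\nabla c^\varepsilon\to n^\varepsilon$ via the mild formulations and semigroup smoothing, and an $\varepsilon$-uniform maximal estimate for the stochastic convolution closed by Lemma \ref{GGI}.
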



Set $\tau_M^{\varepsilon,R}=\tau^{\varepsilon,R}\wedge\tau_M^\varepsilon$. By (\ref{20231130}) and (\ref{4.5}),  we get
\begin{align*}
	\mathbb{P}(\tau_M^{\varepsilon,R}<T)\leqslant \mathbb{P}(\tau^{\varepsilon,R}<T)+\mathbb{P}(\tau^\varepsilon_M<T;\tau^{\varepsilon,R}\geqslant T)\nonumber
	\leqslant\frac{C_{N,T}}{R}+\frac{C_{R,T,N}}{M^2}.
\end{align*}
Now, letting $M\rightarrow \infty$ and than $R\rightarrow \infty$, we obtain
\begin{align}\label{tauRM<T}
	\lim\limits_{R\rightarrow \infty }\lim\limits_{M\rightarrow \infty }\sup_{0<\varepsilon\leqslant 1}\mathbb{P}(\tau^{\varepsilon,R}_M<T)=0.
\end{align}

With these preparations, we are now in the position to prove  (b) in Condition \ref{cod} in the proof of Theorem \ref{Thm main 1}.

\begin{verificationa}
	Take $\mathcal{E}=C([0,T],C^0(\bar{\mathcal{O}}))\times C([0,T],C^0(\bar{\mathcal{O}}))\times
	C([0,T],D(A^\alpha))$.  Let $(n^0,c^0,u^0)=\mathcal{G}^0(h^\varepsilon)$, that is, $(n^0,c^0,u^0)$ satisfy (\ref{eq skeleton 1}) with $h$ replaced by $h^\varepsilon$.
	
	Recall the definition
	$$
	\|(n,c,u)\|^2_{S_T}:=\|n\|^2_{\Upsilon^n_T}+\|c\|^2_{\Upsilon^c_T}
	+
	\|u\|^2_{\Upsilon^u_T}.
	$$
	The  Sobolev imbedding $W^{1,q}(\mathcal{O})\hookrightarrow C^0(\bar{\mathcal{O}})$ shows that $\|(n^\varepsilon,c^\varepsilon,u^\varepsilon)-(n^0,c^0,u^0)\|^2_{\mathcal{E}}\leqslant\|(n^\varepsilon,c^\varepsilon,u^\varepsilon)-(n^0,c^0,u^0)\|^2_{S_T}$. Thus, to verify (b) in Condition \ref{cod} in the proof of Theorem \ref{Thm main 1}, we only need to show that
	$$\lim\limits_{\varepsilon\rightarrow 0}\mathbb{P}(\|(n^\varepsilon,c^\varepsilon,u^\varepsilon)-(n^0,c^0,u^0)\|_{S_T}>\delta)=0.$$
	
	From the equations satisfied by $(n^\varepsilon,c^\varepsilon,u^\varepsilon)$ and $(n^0,c^0,u^0)$, we have
	\begin{align}\label{epsilon-0 n}
		n^\varepsilon(t)-n^0(t)=
		&-\int_0^te^{(t-s)\delta \Delta}\Big\{u^\varepsilon(s)\cdot \nabla n^\varepsilon(s)-u^0(s)\cdot \nabla n^0(s)\Big\}ds\nonumber\\
		&-\int_0^te^{(t-s)\delta \Delta}\Big\{\nabla\cdot\Big(\chi(c^\varepsilon(s))n^\varepsilon(s)\nabla c^\varepsilon(s)-\chi(c^0(s))n^0(s)\nabla c^0(s)\Big)\Big\}ds,\
	\end{align}
    \begin{align}\label{epsilon-0 c}
		c^\varepsilon(t)-c^0(t)=
		&-\int_0^te^{(t-s)\mu \Delta}\Big\{u^\varepsilon(s)\cdot \nabla c^\varepsilon(s)-u^0(s)\cdot \nabla c^0(s)\Big\}ds\nonumber\\
		&-\int_0^te^{(t-s)\mu \Delta}\Big\{k(c^\varepsilon(s))n^\varepsilon(s)-k(c^0(s))n^0(s)\Big\}ds,
	\end{align}
    \begin{align}\label{epsilon-0 u}
		u^\varepsilon(t)-u^0(t)=
		&-\int_0^te^{-(t-s)\nu A}\mathcal{P}\Big\{(u^\varepsilon(s)\cdot \nabla)u^\varepsilon(s)-(u^0(s)\cdot \nabla)u^0(s)\Big\}ds\nonumber\\
		&-\int_0^te^{-(t-s)\nu A}\mathcal{P}\Big\{n^\varepsilon(s)\nabla\phi-n^0(s)\nabla\phi \Big\}ds \nonumber\\
		&+\int_0^te^{-(t-s)\nu A}\left(\sigma(u^\varepsilon(s))h^\varepsilon(s)-\sigma(u^0(s))h^\varepsilon(s)\right)ds\nonumber\\
		&+\varepsilon\int_0^te^{-(t-s)\nu A}\sigma(u^\varepsilon(s))dW_s.
	\end{align}
	
	Similar to the proof of (\ref{n z-0}) and (\ref{c z-0}), for $\beta\in(\frac{1}{q},\frac{1}{2})$ and $\gamma\in (\frac{1}{2},1)$,we have,
    \begin{align}\label{n e-0}
        \|n^\varepsilon(t)-n^0(t)\|_{L^\infty}\leqslant \int_0^t&(t-s)^{-\frac{1}{2}-\beta}\big( \|u^\varepsilon(s)\|_\alpha\|n^\varepsilon(s)-n^0(s)\|_{L^\infty}+\|n^0(s)\|_{L^\infty}\|u^\varepsilon(s)-u^0(s)\|_\alpha \nonumber\\
        &+\|n^\varepsilon(s)\|_{L^\infty}\|c^\varepsilon(s)-c^0(s)\|_{1,q}+\|c^0(s)\|_{1,q}\|n^\varepsilon(s)-n^0(s)\|_{L^\infty} \big) ds,
    \end{align}
and
\begin{align}\label{c e-0}
		\|c^\varepsilon(t)-c^0(t)\|_{1,q}
		\leqslant
		\int_0^t&(t-s)^{-\gamma}\big( \|u^\varepsilon(s)\|_\alpha\| c^\varepsilon(s)- c^0(s)\|_{1,q}+\|c^0(s)\|_{1,q}\|u^\varepsilon(s)-u^0(s)\|_\alpha \nonumber\\
        &+\|c^\varepsilon(s)\|_{1,q}\|n^\varepsilon(s)-n^0(s)\|_{L^\infty}+\|n^0(s)\|_{L^\infty} \|c^\varepsilon(s)-c^0(s)\|_{1,q}\big)ds.
	\end{align}

    Applying $A^\alpha$ to both side of (\ref{epsilon-0 u}), and using  similar arguments as proving (\ref{u z-0 I1}), (\ref{u z-0 I2}) and (\ref{u z-0 I3}), we get
    \begin{align}\label{u e-0 0}
	&\|A^\alpha u^\varepsilon(t)-A^\alpha u^0(t)\|_{L^2}\nonumber\\
	\leqslant&\int_0^{t}(t-s)^{-\alpha}\big(\left( \|u^\varepsilon(s)\|_\alpha+\|u^0(s)\|_\alpha\right) \|u^\varepsilon(s)-u^0(s)\|_\alpha +\|n^\varepsilon(s)-n^0(s) \|_{L^\infty} \big)ds \nonumber\\
 &+\Big(C_{N,T} \int_0^{t}\|u^\varepsilon(s)-u^0(s)\|_{\alpha}^2ds\Big)^{\frac{1}{2}}+\|\varepsilon\int_0^tA^\alpha e^{-(t-s)\nu A}\sigma(u^\varepsilon(s))dW_s\|_{L^2}.
\end{align}

 Letting $M_t^\varepsilon=\int_0^tA^\alpha e^{-(t-s)\nu A}\sigma(u^\varepsilon(s))dW_s$, combining (\ref{epsilon-0 n}) -(\ref{u e-0 0}),
 we get
    \begin{align}\label{epsilon-0}
    	&\|(n^{\varepsilon},c^{\varepsilon},u^{\varepsilon})-(n^0,c^0,u^0)\|_{S_t}^2\nonumber\\
    	\leqslant&C_T\Big(
     \int_0^t\|n^\varepsilon(s)-n^0(s)\|_{L^\infty}^2\Big( (t-s)^{ -\frac{1}{2}-\beta}\|u^\varepsilon(s)\|_\alpha^2+(t-s)^{ -\frac{1}{2}-\beta}\|c^0(s)\|_{1,q}^2\nonumber\\
     &\ \ \ \ \ \ \ \ \ +(t-s)^{-\gamma}\|c^\varepsilon(s)\|_{1,q}^2+(t-s)^{-\alpha}\Big) ds \nonumber\\
    	&+\int_0^t\|c^\varepsilon(s)-c^0(s)\|_{1,q}^2\left( (t-s)^{ -\frac{1}{2}-\beta}\|n^\varepsilon(s)\|_{L^\infty}^2+(t-s)^{-\gamma}\|u^\varepsilon(s)\|_\alpha^2
     +(t-s)^{-\gamma}\|n^0(s)\|_\alpha^2\right)ds\nonumber\\
    	&+\int_0^t\|u^\varepsilon(s)-u^0(s)\|_\alpha^2\left( (t-s)^{ -\frac{1}{2}-\beta}\|n^0(s)\|_{L^\infty}^2+(t-s)^{-\gamma}\|c^0(s)\|_{1,q}^2 \right)ds\nonumber\\
    	&+\int_0^t\|u^\varepsilon(s)-u^0(s)\|_\alpha^2\left( (t-s)^{-\alpha}(\|u^\varepsilon(s)\|^2_\alpha+\|u^0(s)\|^2_\alpha) +C_{N,T}\right)ds\Big)\nonumber\\
    	&+\varepsilon\|M_t^\varepsilon\|^2_{L^2}
     .
    \end{align}
By the definition of $\tau^{\varepsilon,R}_M$ and (\ref{CNT}), we arrive at

\begin{align}
    	&\sup_{0\leqslant t\leqslant T\wedge\tau^{\varepsilon,R}_M}\|(n^{\varepsilon},c^{\varepsilon},u^{\varepsilon})-(n^0,c^0,u^0)\|_{S_t}^2\nonumber\\
    	\leqslant&C_{T,N,M}\Big(
     \int_0^{T\wedge\tau^{\varepsilon,R}_M}\|n^\varepsilon(s)-n^0(s)\|_{L^\infty}^2\left( (t-s)^{ -\frac{1}{2}-\beta}+(t-s)^{-\gamma}+(t-s)^{-\alpha}   \right)ds \nonumber\\
    	&+\int_0^{T\wedge\tau^{\varepsilon,R}_M}\|c^\varepsilon(s)-c^0(s)\|_{1,q}^2\left( (t-s)^{ -\frac{1}{2}-\beta}+(t-s)^{-\gamma}  \right)ds\nonumber\\
    	&+\int_0^{T\wedge\tau^{\varepsilon,R}_M}\|u^\varepsilon(s)-u^0(s)\|_\alpha^2\left( (t-s)^{ -\frac{1}{2}-\beta}+(t-s)^{-\gamma} +(t-s)^{-\alpha} +1\right)ds\Big)\nonumber\\
    	&+\varepsilon\sup_{t\in[0,{T\wedge\tau^{\varepsilon,R}_M}]}\| M_t^\varepsilon\|^2_{L^2}.\nonumber
    \end{align}
       Taking expectation, and using Lemma \ref{GGI} inequality, we get
   \begin{align}\label{5.21}
   	\mathbb{E}\left( \sup_{0\leqslant t\leqslant T\wedge\tau^{\varepsilon,R}_M}\|(n^{\varepsilon},c^{\varepsilon},u^{\varepsilon})-(n^0,c^0,u^0)\|_{S_t}^2\right)
   	\leqslant C_{T,M,N}\varepsilon\mathbb{E}\left(\sup_{t\in[0,{T\wedge\tau^{\varepsilon,R}_M}]}\| M_t^\varepsilon\|^2_{L^2}\right),
   \end{align}

Now, we estimate $M_t^\varepsilon$, notice that  $M_t^\varepsilon$ satisfies the SPDE
    $$dM_t^\varepsilon=-AM_t^\varepsilon+A^\alpha\sigma(u^\varepsilon(t))dW_t.$$
    Using $\rm It\hat{o}$'s Formula and the BDG inequality,  we have
    \begin{align}
    	& \mathbb{E}\Big(\sup_{t\in[0,T\wedge\tau^{\varepsilon,R}_M]}\|M_t^\varepsilon\|^2_{L^2}\Big)+2\mathbb{E}\Big(\int_0^{T\wedge\tau^{\varepsilon,R}_M}\|M_t^\varepsilon\|^2_{\frac{1}{2}}dt\Big)\nonumber\\
    	\leqslant&
\mathbb{E}\Big(\int_0^{T\wedge\tau^{\varepsilon,R}_M}\|A^\alpha\sigma(u^\varepsilon(t))\|^2_{\mathcal{L}_0^2}dt\Big)+
    	2\mathbb{E}\Big(\sup_{t\in[0,T\wedge\tau^{\varepsilon,R}_M]}\Big|\int_0^t\Big\langle M_t^\varepsilon,A^\alpha\sigma(u_\varepsilon(s))dW_s\Big\rangle\Big|\Big)\nonumber\\
    	\leqslant&
    	C \left( T \mathbb{E}\left( \sup_{0\leqslant t\leqslant T\wedge\tau^{\varepsilon,R}_M}\|u^\varepsilon(t)\|_\alpha^2\right) +T\right)
    	+
    	\frac{1}{2}\mathbb{E}\Big(\sup_{t\in[0,T\wedge\tau^{\varepsilon,R}_M]}\|M_t^\varepsilon\|^2_{L^2}\Big),\nonumber
    \end{align}
    here we have used Assumption (H.4). Hence, by (\ref{4.5}),
    \begin{eqnarray}\label{5.22}
    	\mathbb{E}\Big(\sup_{t\in[0,T\wedge\tau^{\varepsilon,R}_M]}\|M_t^\varepsilon\|^2_{L^2}\Big)
    	\leqslant
    	C\left(  T \mathbb{E}\left( \sup_{0\leqslant t\leqslant T\wedge\tau^{\varepsilon,R}_M}  \|u^\varepsilon(t)\|_\alpha^2\right) +T\right) \leqslant C_{T,M,R,N}.
    \end{eqnarray}

Combine (\ref{5.21}) and (\ref{5.22}), we get, for any $\delta>0$,
    \begin{align}
    	&\mathbb{P}(\|(n^\varepsilon,c^\varepsilon,u^\varepsilon)-(n^0,c^0,u^0)\|_{S_T}>\delta)\nonumber\\
    	\leqslant&
    	\mathbb{P}\left( \sup_{0\leqslant t\leqslant  {T\wedge\tau_M^{\varepsilon,R}}}\|(n^\varepsilon,c^\varepsilon,u^\varepsilon)-(n^0,c^0,u^0)\|_{S_t}>\delta;\tau_M^{\varepsilon,R}\geqslant T\right) +\mathbb{P}(\tau_M^{\varepsilon,R}<T)\nonumber\\
    	\leqslant&\frac{\varepsilon C_{T,M,N,R}}{\delta^2}+\sup_{0<\varepsilon\leqslant 1}\mathbb{P}(\tau_M^{\varepsilon,R}<T),
    \end{align}
    which leads to $0$, by letting $\varepsilon\rightarrow 0$, $M\rightarrow \infty$ and then $R\rightarrow \infty$, and using (\ref{tauRM<T}).

		This completes the proof of the verification of (b).	\hfill $\Box$

\end{verificationa}

\section*{Acknowledgments}
This work is partially supported by National Key R\&D program of China (No. 2022YFA1006001), National Natural Science Foundation of China(Nos. 12071123,  12131019, 12371151, 11721101) and  the Science and Technology Innovation Program of Hunan Province (No. 2022RC1189). Jian-liang Zhai's research is also supported by the School Start-up Fund(USTC) KY0010000036
and the Fundamental Research Funds for the Central Universities (No. WK3470000016).

\end{document}